\numberwithin{equation}{section}
\newtheorem{Theorem}{Theorem}[section]
\newtheorem*{Theorem*}{Theorem}
\newtheorem{Corollary}[Theorem]{Corollary}
\newtheorem{Lemma}[Theorem]{Lemma}
\newtheorem{Proposition}[Theorem]{Proposition}
 { \theoremstyle{definition}
\newtheorem{Definition}[Theorem]{Definition}
\newtheorem{Example}[Theorem]{Example}
\newtheorem{Remark}[Theorem]{Remark} }
\newcommand{\g}{\mathfrak{g}}
\newcommand{\Id}{\operatorname{Id}}
\newcommand{\A}{\mathcal{A}}
\newcommand{\NC}{\operatorname{NC}}
\newcommand{\Int}{\operatorname{Int}}
\newcommand{\ST}{\operatorname{ST}}
\newcommand{\cHS}{H_\mathcal{S}}
\newcommand{\cR}{\mathcal{R}}
\newcommand{\ccP}{\mathcal{P}}
\newcommand{\uno}{{\bf{1}}}
\newcommand{\sk}{\operatorname{sk}}
\newcommand{\AdmT}{\widetilde{\operatorname{Adm}}}
\newcommand{\Expl}{\mathcal{E}_\prec}
\newcommand{\Expr}{\mathcal{E}_\succ}
\definecolor{red}{rgb}{1.,0.,0.}
\definecolor{green}{rgb}{0.,1.,0.}
\definecolor{blue}{rgb}{0.,0.,1.}
\definecolor{orange}{rgb}{1.,0.8431372549019608,0.}
\tikzset{
solid node/.style={circle,draw,inner sep=1.5,fill=black},
hollow node/.style={circle,draw,inner sep=1.5,fill=white}
}
\begin{document}
\allowdisplaybreaks

\newcommand{\arXivNumber}{2111.02179}

\renewcommand{\thefootnote}{}

\renewcommand{\PaperNumber}{073}

\FirstPageHeading

\ShortArticleName{Monotone Cumulant-Moment Formula and Schr\"oder Trees}

\ArticleName{Monotone Cumulant-Moment Formula\\ and Schr\"oder Trees\footnote{This paper is a~contribution to the Special Issue on Non-Commutative Algebra, Probability and Analysis in Action. The~full collection is available at \href{https://www.emis.de/journals/SIGMA/non-commutative-probability.html}{https://www.emis.de/journals/SIGMA/non-commutative-probability.html}}}

\Author{Octavio ARIZMENDI~$^{\rm a}$ and Adrian CELESTINO~$^{\rm b}$}

\AuthorNameForHeading{O.~Arizmendi and A.~Celestino}

\Address{$^{\rm a)}$~Centro de Investigaci\'on en Matem\'aticas, Calle Jalisco SN. Guanajuato, Mexico}
\EmailD{\href{octavius@cimat.mx}{octavius@cimat.mx}}
\URLaddressD{\url{https://www.cimat.mx/~octavius/}}

\Address{$^{\rm b)}$~Department of Mathematical Sciences, Norwegian University of Science and Technology,\\
\hphantom{$^{\rm b)}$}~7491 Trondheim, Norway}
\EmailD{\href{mailto:adrian.celestino@ntnu.no}{adrian.celestino@ntnu.no}}

\ArticleDates{Received March 23, 2022, in final form September 25, 2022; Published online October 07, 2022}

\Abstract{We prove a formula to express multivariate monotone cumulants of random variables in terms of their moments by using a Hopf algebra of decorated Schr\"oder trees.}

\Keywords{noncommutative probability; cumulants; monotone cumu\-lants; moment-cumu\-lant formula; Schr\"oder trees; Hopf algebras}

\Classification{05E99; 16T05; 17A30; 46L53}

\renewcommand{\thefootnote}{\arabic{footnote}}
\setcounter{footnote}{0}

\section{Introduction}

In the theory of noncommutative probability, there exist different notions of independence that replace in a noncommutative context the classical notion of independence of random variables in a probability space $(\Omega,\mathcal{F},\mathbb{P})$. By defining in a precise way what a good notion of independence should be, we find that there are only five such notions \cite{BGS,Mura03,Spe97}: tensorial, free, Boolean, monotone and anti-monotone. Free independence was introduced by Voiculescu \cite{Voi85}. It is the most studied one and has fruitful relations and applications with other branches of mathematics, such as combinatorics and random matrix theory.

For many notions in classical probability theory, it is possible to find the respective noncommutative counterpart for each of the distinct noncommutative notions of independence. For example, we have convolutions as well as central limit theorems for the aforementioned five notions of independence. One of the most important concepts is that of cumulants, which have proven to provide the major tool in the combinatorial study of noncommutative probability.

Cumulants are combinatorially defined implicitly in terms of the so-called \textit{moment-cumulant relations}
\begin{alignat*}{3}
& m_n = \sum_{\pi\in \NC(n)} r_\pi \qquad && \text{(free case)},& \\
& m_n = \sum_{\pi\in \Int(n)} b_\pi \qquad && \text{(Boolean case)}, &\\
& m_n = \sum_{\pi\in \NC(n)} \frac{1}{t(\pi)! }h_\pi \qquad && \text{(monotone case)}.&
\end{alignat*}
Here the sums are over certain lattices of noncrossing partitions, which allow expressing the moments as polynomials of the cumulants; see Section~\ref{sec:momentcumulantrel} for the precise definitions. For the free and Boolean case, it is possible to invert the corresponding formulas via general M\"obius inversion on the corresponding partially ordered sets in order to obtain formulas that express free and Boolean cumulants in terms of the moments
\begin{gather}
	\label{eq:freeCM}
	r_n(a_1,\dots,a_n)
	= \sum_{\sigma \in \NC(n)} \operatorname{M\ddot{o}b}(\sigma,1_n)\varphi_\sigma(a_1,\dots,a_n),
	\\ \label{eq:BoolCM}
	b_n(a_1,\dots,a_n)
	= \sum_{\sigma \in \Int(n)}(-1)^{|\sigma|-1}\varphi_\sigma(a_1,\dots,a_n),
\end{gather}
where $\mathrm{M\ddot{o}b}$ stands for the M\"obius function on the incidence algebra of noncrossing partitions. However, this cannot be performed for the monotone analogue since the coefficients $\frac{1}{t(\pi)!}$ do not satisfy certain multiplicative conditions required for M\"obius inversion.

There has been some interest in finding an explicit formula for expressing monotone cumulants in terms of moments using noncrossing partitions. To be specific, one is interested in finding coefficients $\alpha(\pi)$ for any noncrossing partition $\pi$ such that a formula of the form
\begin{equation}\label{eq:objective}
h_n = \sum_{\pi\in \NC(n)} \alpha(\pi)m_\pi,
\end{equation}
holds for each $n\geq 1$.

In this paper, we propose instead to use the viewpoint of Josuat-Verg\`es, Menous, Novelli, and Thibon put forward in \cite{JMNT}, where these authors introduce a Hopf algebra $\cHS(\A)$ of decorated Schr\"oder trees to rewrite formula \eqref{eq:freeCM} as a sum over of Schr\"oder trees. Our main result gives an analogous formula for expressing monotone cumulants in terms of the moments as a sum indexed by Schr\"oder trees. See Sections \ref{sec:schroder} and \ref{sec:monotone} for the precise definitions appearing in the formulas.

\begin{Theorem}\label{thm:mainthm1}
Let $(\A,\varphi)$ be a noncommutative probability space and let $\{h_n\colon \A^n\to\mathbb{C}\}_{n\geq1}$ be the family of monotone cumulants of $(\A,\varphi)$. Then, for any $n\geq1$ and elements $a_1,\dots,a_n\in \A$, we have that
\begin{equation}\label{eq:MonMom1}
	h_n(a_1,\dots,a_n) =\sum_{t\in \ST(n)} \omega(\sk(t))\varphi_{\pi(t)} (a_1,\dots,a_n),
\end{equation}
where $\ST(n)$ denotes the set of Schr\"oder trees with $n+1$ leaves and, for a Schr\"oder tree $t$, $\sk(t)$ denotes the so-called skeleton of $t$, and $\omega(\sk(t))$ is the Murua coefficient associated to~$\sk(t)$.
\end{Theorem}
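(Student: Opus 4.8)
The plan is to realize the monotone cumulants as a logarithm of the moment functional inside the Hopf algebra $\cHS(\A)$ of decorated Schröder trees, and to identify the coefficients of this logarithm with the Murua coefficients evaluated on skeletons. As recalled in the excerpt, the free and Boolean inversions \eqref{eq:freeCM}--\eqref{eq:BoolCM} are instances of Möbius inversion, so that they are computed by the antipode of $\cHS(\A)$; the monotone case fails to be of this type precisely because of the weights $\frac{1}{t(\pi)!}$, and a different algebraic operation must replace the antipode. The operation that does the job is the half-shuffle (dendriform) logarithm. I would first show that the monotone moment--cumulant relation $m_n=\sum_{\pi\in\NC(n)}\frac{1}{t(\pi)!}h_\pi$ is equivalent to the statement that the moment functional is the \emph{half-shuffle exponential} $\exp_\prec$ of the infinitesimal character encoding the monotone cumulants. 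The point is that $\exp_\prec$ generates noncrossing partitions weighted exactly by the reciprocal tree-factorials $\frac{1}{t(\pi)!}$, since the time-ordering built into the half-shuffle is what produces these monotone weights.

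Granting this exponential identity, the monotone cumulants are obtained by inverting $\exp_\prec$, that is, by applying the half-shuffle logarithm $\log_\prec$ to the moment functional. I would then transport this computation into $\cHS(\A)$, where the half-shuffle is induced by the grafting of Schröder trees, so that $\log_\prec$ acts on the moment character $\Phi$ whose value on a Schröder tree $t$ is the moment product $\varphi_{\pi(t)}$. The key structural input is that the half-shuffle logarithm, expanded over rooted trees, has coefficients given by the pre-Lie Magnus expansion, i.e., by the Murua coefficients $\omega$. The skeleton map $\sk$ is the bridge between the two indexing sets: it sends a Schröder tree $t$ to the rooted tree $\sk(t)$ on which the Murua coefficient is defined, while the decoration and branching of $t$ are recorded by the partition $\pi(t)$. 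Reading off the coefficient of each Schröder tree in $\log_\prec\Phi$ therefore gives $\omega(\sk(t))$ multiplying $\varphi_{\pi(t)}$, which is exactly \eqref{eq:MonMom1}.

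The main obstacle is this second step together with the identification of coefficients: one has to prove carefully that the combinatorial weights $\frac{1}{t(\pi)!}$ coincide with those produced by $\exp_\prec$, and that the coefficients arising in $\log_\prec$ are the Murua coefficients of the \emph{skeleton} $\sk(t)$ rather than of $t$ itself. I expect to settle this by induction on $n$: assuming \eqref{eq:MonMom1} for smaller indices, I would substitute the proposed expression for $h_n$ into the monotone moment--cumulant relation and check, using the recursive definitions of the Murua coefficients, of the skeleton, and of the tree-factorial $t(\pi)!$, that the moments are reproduced. The bookkeeping between Schröder trees and their skeletons is the delicate point; once it is organized correctly, the remaining steps are standard manipulations in $\cHS(\A)$.
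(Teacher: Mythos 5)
Your overall architecture (lift the moment functional to the Hopf algebra of decorated Schr\"oder trees, compute a logarithm there, and identify the resulting coefficients with Murua coefficients of skeletons) is in the right spirit, but the central algebraic identification is wrong. You claim that the monotone moment--cumulant relation $m_n=\sum_{\pi\in\NC(n)}\frac{1}{t(\pi)!}h_\pi$ is equivalent to $\Phi$ being the \emph{half-shuffle} exponential $\mathcal{E}_\prec$ of the monotone infinitesimal character, and that ``$\exp_\prec$ generates noncrossing partitions weighted by $\frac{1}{t(\pi)!}$.'' This is not the case: in the Ebrahimi-Fard--Patras picture one has $\Phi=\mathcal{E}_\prec(\kappa)=\mathcal{E}_\succ(\beta)=\exp^*(\rho)$, where $\kappa$ is the \emph{free} cumulant character, $\beta$ the Boolean one, and $\rho$ the monotone one. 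The left half-shuffle exponential produces the \emph{unweighted} sum over noncrossing partitions (the free relation), and its logarithm $\mathcal{L}_\prec(\Phi)=\kappa$ returns the free cumulants. The reciprocal tree-factorial weights come from the exponential with respect to the \emph{associative} convolution product $*$, so the object you must compute is $\rho=\log^*(\Phi)$, not $\log_\prec(\Phi)$. Consequently your second step, ``the half-shuffle logarithm, expanded over rooted trees, has coefficients given by the Murua coefficients,'' also misattributes the role of $\omega$: the Murua coefficients arise in the pre-Lie Magnus expansion linking $\rho$ to $\kappa$, and in the expansion of $\log^*$, not in the half-shuffle logarithm. Carrying out your plan as written would yield the free cumulant formula (the analogue of Josuat-Verg\`es et al.), not \eqref{eq:MonMom1}.

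Beyond this misidentification, the proposal is missing the combinatorial heart of the argument. The actual proof computes $\tilde\rho(t\otimes w)=\sum_{k\ge1}\frac{(-1)^{k+1}}{k}\hat\Phi^{*k}(t\otimes w)$ with $\hat\Phi=\tilde\Phi-\varepsilon$, and the key step is to evaluate $\hat\Phi^{*k}(t\otimes w)$ via the $k$-fold iterated coproduct: since $\tilde\Phi$ vanishes off corollas, the only surviving terms are iterated admissible cuts decomposing $t$ into corollas, and these are in bijection with strictly order-preserving surjections $\sk(t)\to[k]$, whose number is exactly $\omega_k(\sk(t))$; each such term contributes $\varphi_{\pi(t)}(a_1,\dots,a_n)$. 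Your proposal defers exactly this identification to an unspecified ``induction on $n$'' with ``bookkeeping between Schr\"oder trees and their skeletons,'' which is the delicate point you acknowledge but do not supply. Without the correct choice of logarithm and without the bijection between iterated cuts and order-preserving surjections (or an equivalent counting argument), the proof does not go through.
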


We remark that \eqref{eq:MonMom1} may be rewritten in the form of \eqref{eq:objective} by regrouping the trees according to which $\pi\in \NC(n)$ satisfies that $\pi(t)=\pi$. Thus \eqref{eq:MonMom1} may be seen as a refinement of the desired formula~\eqref{eq:objective} (see Corollary~\ref{cor:maincor} for the precise statement).

Our approach is based on a series of recent papers by Ebrahimi-Fard and Patras \cite{EFP1,EFP2,EFP3,EFP4,EFP5}, where the authors have developed a Lie-theoretical framework for cumulants in noncommutative probability. In such a picture, the relations between moments and the different types of cumulants can be explained analogously to the relation between a Lie algebra and its corresponding (Lie) group. More precisely, given a noncommutative probability space $(\A,\varphi)$, we can consider a~Hopf algebra $H$ given by the double tensor algebra defined over $\A$, and a character $\Phi\colon H \to \mathbb{C}$ extending the linear functional $\varphi$. The free, Boolean, and monotone cumulant functionals can be identified with certain infinitesimal characters $\kappa$, $\beta$ and $\rho$ defined on $H$, respectively. The important feature of~$H$ is that its coproduct splits into two half-coproducts, providing two non-associative products $\prec$ and $\succ$ on the algebraic dual $H^*$, such that $(H^*,\prec,\succ)$ is a unital noncommutative shuffle algebra~\cite{EFP1}.

Within this framework, the relations between moments and free, Boolean, and monotone cumulants are encoded by the three exponential maps $\mathcal{E}_\prec$, $\mathcal{E}_\succ$ and $\exp^*$ associated to $\prec$, $\succ$ and the associative product $*$ of $H^*$, respectively:
\begin{gather*}
\Phi = \mathcal{E}_\prec(\kappa),
\qquad \Phi = \mathcal{E}_{\succ}(\beta),
\qquad \Phi = \exp^*(\rho).
\end{gather*}
We observe that in this context, the problem of expressing the monotone cumulants in terms of the moments is simply equivalent to computing the logarithm with respect to the associative product $\rho = \log^*(\Phi).$ However, finding a closed formula in terms of noncrossing partitions is not obvious from this relation.

\subsection*{Organization of the paper} In addition to the preceding introduction, this paper is organized in the following way. Section~\ref{sec:momentcumulantrel} contains the notations and definitions of the basic combinatorial objects of noncommutative probability, such as noncrossing partitions, monotone partitions, and the forest of nesting of a~noncrossing partition. With this, we can give a precise definition of cumulants for free, Boolean and monotone independence. In Section~\ref{sec:shuffle}, we explain the main ideas for the Hopf algebraic approach of Ebrahimi-Fard and Patras. In particular, we give an explicit definition of the shuffle algebra of interest, as well as describe the link with the moment-cumulant formulas. In Section~\ref{sec:schroder}, we present the Hopf algebra of decorated Schr\"oder trees together with the shuffle algebra associated to it. Section~\ref{sec:monotone} is devoted to the main result of this paper: we prove two combinatorial lemmas, and as a consequence, we get Theorem~\ref{thm:mainthm1}. Finally, in Section~\ref{sec:boolean}, we obtain the Boolean counterpart of the moment-cumulant formula in terms of Schr\"oder trees.

\section{Moment-cumulant relations in noncommutative probability}\label{sec:momentcumulantrel}

\subsection{Noncrossing partitions}

Partitions and noncrossing partitions are fundamental objects in the combinatorial study of noncommutative probability theory.
\begin{Definition}\quad
\begin{enumerate}\itemsep=0pt
\item Recall that a \textit{noncrossing partition} of $[n] = \{1,\dots,n\}$ is a set partition $\pi = \{V_1,\dots,V_k\}$ of $[n]$ such that there are no elements $a<b<c<d$ in $[n]$ such that $a,c \in V_i$ and $b,d \in V_j$, and $i\neq j$. The elements of $\pi$ are called the \textit{blocks of $\pi$}. We denote by $\NC(n)$ to the set of noncrossing partitions of $[n]$.
\item We say that $\pi \in \NC(n)$ is \textit{irreducible} if both $1$ and $n$ belong to the same block in $\pi$. We will write $\NC_{\mathrm{irr}}(n)$ to denote the set of irreducible noncrossing partitions on~$[n]$.
\item An \textit{interval partition} is a noncrossing partition $\pi\in\NC(n)$ such that all its blocks are of the form $\{k,k+1,\dots,k+l\}$ for some $1\leq k\leq n$ and $0\leq l\leq n-k$. The set of interval partitions of $[n]$ is denoted $\Int(n)$.
\end{enumerate}
\end{Definition}

\begin{figure}[t]
$$
\begin{array}{c c c}
\begin{tikzpicture}[thick,font=\small]
 \path 	(0,0) 		node (a) {1}
 	(0.5,0) 	node (b) {2}
 	(1,0) 		node (c) {3}
 	(1.5,0) 	node (d) {4}
 	(2,0) 		node (e) {5}
	(2.5,0) 		node (f) {6};
 \draw (a) -- +(0,0.75) -| (e);
 \draw (b) -- +(0,0.60) -| (d);
 \draw (c) -- +(0,0.75) -| (c);
 \draw (f) -- +(0,0.75) -| (f);
 \end{tikzpicture}
 & \quad &
 \begin{tikzpicture}[thick,font=\small]
 \path 	(0,0) 		node (a) {1}
 	(0.5,0) 	node (b) {2}
 	(1,0) 		node (c) {3}
 	(1.5,0) 	node (d) {4}
 	(2,0) 		node (e) {5}
 	(2.5,0) 	node (f) {6};
 \draw (a) -- +(0,0.75) -| (d);
 \draw (c) -- +(0,0.75) -| (c);
 \draw (b) -- +(0,0.60) -| (b);
 \draw (e) -- +(0,0.75) -| (f);
 \end{tikzpicture}\\
 \mbox{a) Crossing partition}& \quad & \mbox{b) Noncrossing partition}\\[0.2cm]
 \begin{tikzpicture}[thick,font=\small]
 \path 	(0,0) 		node (a) {1}
 	(0.5,0) 	node (b) {2}
 	(1,0) 		node (c) {3}
 	(1.5,0) 	node (d) {4}
 	(2,0) 		node (e) {5}
 	(2.5,0) 	node (f) {6};
 \draw (a) -- +(0,0.75) -| (f);
 \draw (b) -- +(0,0.60) -| (e);
 \draw (c) -- +(0,0.45) -| (d);
 \end{tikzpicture}
 & \quad &
 \begin{tikzpicture}[thick,font=\small]
 \path 	(0,0) 		node (a) {1}
 	(0.5,0) 	node (b) {2}
 	(1,0) 		node (c) {3}
 	(1.5,0) 	node (d) {4}
 	(2,0) 		node (e) {5}
 	(2.5,0) 	node (f) {6};
 \draw (a) -- +(0,0.75) -| (b);
 \draw (c) -- +(0,0.75) -| (e);
 \draw (d) -- +(0,0.75) -| (d);
 \draw (f) -- +(0,0.75) -| (f);
 \end{tikzpicture}
 \\ \mbox{c) Irreducible partition} &\quad &\mbox{d) Interval partition}
\end{array}
$$
\caption{Different types of set partitions of the set $[6]=\{1,\dots,6\}$.}
\end{figure}
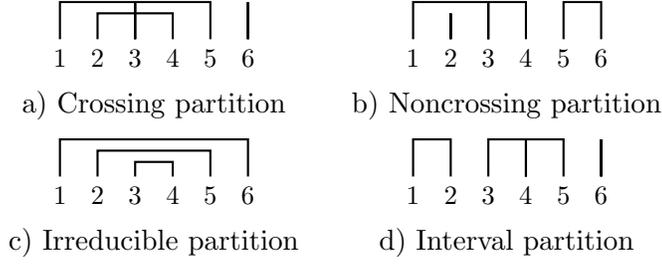

The set $\NC(n)$ is endowed with a poset structure as follows: for two partitions $\pi,\sigma \in \NC(n)$, we say that $\pi \leq \sigma$ if and only if every block in $\pi$ is contained in a block of $\sigma$. This order is called the \textit{reverse refinement order} on $\NC(n)$. It provides a lattice structure on $\NC(n)$, with maximal element being the partition into a single block, $1_n = \{\{1,\dots,n\}\}$, and the minimal element consisting of $n$ blocks, $0_n = \{ \{1\},\dots, \{n\}\}$.

For every noncrossing partition $\pi = \{V_1,\dots,V_k\}\in \NC(n)$, we can define an order on the blocks by stating that for $V_{i_s},V_{i_t} \in \pi$, $V_{i_s}\leq V_{i_t}$ if only if $V_{i_t}$ is nested in $V_{i_s}$, i.e., $a \in [\min{V_{i_s}},\max{V_{i_s}}]$ for any $a \in V_{i_t}$. It is clear that the maximal elements in~$\pi$ are the interval blocks.
Given a minimal block $V\in \pi\in \NC(n)$, we say that $\pi':=\{ W\in \pi\colon W\leq V\}$ is an \textit{irreducible component of~$\pi$}. In other words, $\pi'$ is a noncrossing partition obtained by restricting~$\pi$ to the interval $[\min(V),\max(V)]$. It is clear then that $\pi$ has a unique irreducible component if and only if $\pi$ is an irreducible noncrossing partition.

Associated to an irreducible noncrossing partition $\pi\in \NC_{\mathrm{irr}}(n)$, we can construct a rooted tree $t(\pi)$ with $|\pi| = k$ vertices. The vertices of $t(\pi)$ are decorated by the blocks of $\pi$, and there is a directed edge from a vertex $v$ to a vertex $w$ if and only if the corresponding block $V$ associated to $v$ \textit{covers} the block $W$ associated to $w$, i.e.~$V<W$ and there is no other block $U\in \pi$ such that $V<U<W$. The tree $t(\pi)$ is called the \textit{tree of nestings} of~$\pi$. For the general case of $\pi \in \NC(n)$, we define the \textit{forest of nestings of~$\pi$}, also denoted by~$t(\pi)$. It consists of the ordered forest of rooted trees of nestings of the irreducible components of~$\pi$, where the order of the irreducible components is given by the total order determined by the minimum element of each component.

The \emph{tree factorial} is recursively defined for any rooted tree $t$ by $t! = 1$ if $t$ is the rooted tree consisting of a single vertex. Otherwise, if $t$ is a rooted tree that can be obtained by grafting the subtrees $s_1,\dots,s_m$ to the root vertex, i.e., $t=[s_1,\dots,s_m]$, then
$t! = |s| s_1!\cdots s_m!$.
Here, the degree $|t|=1+|s_1| + \cdots + |s_m|$ of $t$ is its number of vertices. Finally, if $f$ is a forest formed by rooted trees $t_1,\dots,t_m$, we define $f! := t_1!\cdots t_m!$.

\begin{figure}[h]
$$
\begin{array}{c@{\,}c@{\,} c c @{\,}c@{\,} c}
\pi &=& \begin{tikzpicture}[baseline={([yshift=0.3ex]current bounding box.center)},thick,font=\small]
 \path 	(0,0) 		node (a) {1}
 	(0.5,0) 	node (b) {2}
 	(1,0) 		node (c) {\phantom{1}}
 	(1.5,0) 	node (d) {4}
 	(2,0) 		node (e) {\phantom{1}}
 	(2.5,0) 	node (f) {6}
 	(3,0) 		node (g) {\phantom{1}}
 	(3.5,0) 	node (h) {\phantom{1}}
		(4,0) 		node (i) {\phantom{1}}
		(4.5,0) 	node (j) {\phantom{1}};
 \draw (a) -- +(0,0.75) -| (j);
 \draw (b) -- +(0,0.60) -| (b);
 \draw (c) -- +(0,0.75) -| (c);
 \draw (d) -- +(0,0.60) -| (e);
 \draw (e) -- +(0,0.60) -| (h);
 \draw (f) -- +(0,0.50) -| (g);
 \draw (i) -- +(0,0.75) -| (j);
 \end{tikzpicture}, 	& \quad t(\pi) &=&
				 \begin{tikzpicture}[baseline={([yshift=-1ex]current bounding box.center)},scale=0.6,
level 1/.style={level distance=7mm,sibling distance=7mm}]
\node(0)[solid node,label=right:{\tiny 1}]{}
child{node(1)[solid node,label=left:{\tiny 2}]{}
}
child{node(2)[solid node,label=right:{\tiny 4}]{}
	child{[black] node(11)[solid node,label=right:{\tiny 6}]{}}
};
\end{tikzpicture}
\\[5mm]
\sigma &= & \begin{tikzpicture}[baseline={([yshift=0.3ex]current bounding box.center)},thick,font=\small]
 \path 	(0,0) 		node (a) {1}
 	(0.5,0) 	node (b) {2}
 	(1,0) 		node (c) {\phantom{1}}
 	(1.5,0) 	node (d) {4}
 	(2,0) 		node (e) {\phantom{1}}
 	(2.5,0) 	node (f) {6}
 	(3,0) 		node (g) {7}
 	(3.5,0) 	node (h) {\phantom{1}};
 \draw (a) -- +(0,0.75) -| (c);
 \draw (b) -- +(0,0.60) -| (b);
 \draw (c) -- +(0,0.75) -| (c);
 \draw (d) -- +(0,0.75) -| (e);
 \draw (e) -- +(0,0.75) -| (h);
 \draw (f) -- +(0,0.60) -|(f);
 \draw (g) -- +(0,0.60)-|(g);
 \end{tikzpicture}, 	& \quad t(\sigma) &=&\;\;\begin{tikzpicture}[baseline={([yshift=-1ex]current bounding box.center)},scale=0.6,
level 1/.style={level distance=7mm,sibling distance=7mm}]
\node(0)[solid node,label=right:{\tiny 1}]{}
child{node(1)[solid node,label=right:{\tiny 2}]{}
};
\end{tikzpicture}
\;\; \begin{tikzpicture}[baseline={([yshift=-1ex]current bounding box.center)},scale=0.6,
level 1/.style={level distance=7mm,sibling distance=7mm}]
\node(0)[solid node,label=right:{\tiny 4}]{}
child{node(1)[solid node,label=left:{\tiny 6}]{}
}
child{node(2)[solid node,label=right:{\tiny 7}]{}
};
\end{tikzpicture}
\end{array}
$$
\caption{Noncrossing partitions $\pi$ and $\sigma$ together with their respective forest of nesting $t(\pi)$ and $t(\sigma)$. Each vertex of the forest is decorated with the minimal element of their associated block in the partition.}
\end{figure}
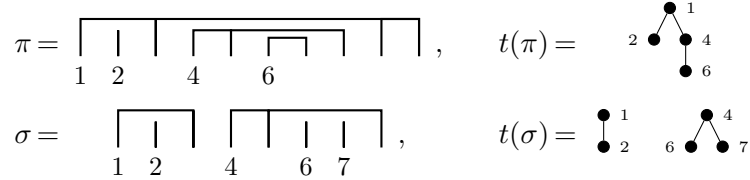

\subsection{Cumulants in noncommutative probability}\label{sec:cumulants}

As stated in the introduction, cumulants are an important tool for dealing with noncommutative notions of independence. Before recalling the definition of cumulants, we fix some notation.
\begin{Definition}
 Let $\A$ be a vector space and let $\{f_n\colon \A^n \to \mathbb{C}\}_{n\geq1}$ be a family of multilinear functionals. For each $\pi \in \NC(n)$ and elements $a_1,\dots,a_n \in \A$, we denote{\samepage
\[
	f_{\pi}(a_1,\dots,a_n) := \prod_{V \in \pi} f_{|V|}(a_1,\dots,a_n | V),
\]
where for $V = \{i_1 < \cdots < i_s\}$, $f_{|V|}(a_1,\dots,a_n|V) := f_s(a_{i_1},\dots,a_{i_s})$.}
\end{Definition}
A \textit{noncommutative probability space} is a pair $(\A,\varphi)$ where $\A$ is a unital algebra over $\mathbb{C}$ and $\varphi\colon \A\to \mathbb{C}$ is a unital linear functional such that $\varphi(1_\A)=1$, where $1_\A$ is the unit of $\A$. In this note, we are not assuming that $\A$ or $\varphi$ have any additional structure, for instance, the existence of an involution on $\A$ or positivity of $\varphi$.
Let $(\A,\varphi)$ be a noncommutative probability space. Associated to it, we can construct a family of multilinear functionals $\{\varphi_n\colon \A^n\to\mathbb{C}\}_{n\geq 1}$ by the defining $\varphi_n(a_1,\dots,a_n) := \varphi(a_1\cdots a_n)$, for any $n\geq1$ and $a_1,\dots,a_n\in \A$. Note that $a_1\cdots a_n$ is a product in $\A$. The definition of cumulants can be stated as follows.

\begin{Definition}
Let $(\A,\varphi)$ be a noncommutative probability space.
\begin{itemize}\itemsep=0pt
\item \textit{Free cumulants} are the family of multilinear functionals $\{r_n\colon \A^n\to\mathbb{C}\}_{n\geq1}$ recursively defined by~\cite{Spe94}
\begin{equation}\label{eq:freeMC}
\varphi_n(a_1,\dots,a_n) = \sum_{\pi\in \NC(n)} r_\pi(a_1,\dots,a_n).
\end{equation}
\item \textit{Boolean cumulants} are the family of multilinear functionals $\{b_n\colon \A^n\to\mathbb{C}\}_{n\geq1}$ recursively defined by~\cite{SW97}
\begin{equation}\label{eq:BoolMC}
\varphi_n(a_1,\dots,a_n) = \sum_{\pi\in \Int(n)} b_\pi(a_1,\dots,a_n).
\end{equation}
\item \textit{Monotone cumulants} are the family of multilinear functionals $\{h_n\colon \A^n\to\mathbb{C}\}_{n\geq1}$ recursively defined by~\cite{HS11b}
\begin{equation}\label{eq:monMC}
\varphi_n(a_1,\dots,a_n) = \sum_{\pi\in \NC(n)} \frac{1}{t(\pi)!} h_\pi(a_1,\dots,a_n).
\end{equation}
\end{itemize}

\end{Definition}
Formulas \eqref{eq:freeMC}, \eqref{eq:BoolMC} and \eqref{eq:monMC} are called the \textit{free, Boolean and monotone moment-cumulant relations}, respectively.

\if

\else

Recall that for each $n\geq1$, $\NC(n)$ is a poset with respect to the reverse refinement order. Also, the restriction of this order to interval partitions gives us a poset structure on $\Int(n)$. Hence, it is possible to invert equations \eqref{eq:freeMC} and \eqref{eq:BoolMC} via M\"obius inversion in order to write the free and Boolean cumulants in terms of moments
\begin{eqnarray}
	\label{eq:freeCM}
	r_n(a_1,\dots,a_n)
	&=& \sum_{\sigma \in \NC(n)} \operatorname{M\ddot{o}b}(\sigma,1_n)\varphi_\sigma(a_1,\dots,a_n),
	\\ \label{eq:BoolCM}
	b_n(a_1,\dots,a_n)
	&=& \sum_{\sigma \in \Int(n)}(-1)^{|\sigma|-1}\varphi_\sigma(a_1,\dots,a_n),
\end{eqnarray}
where $\mathrm{M\ddot{o}b}$ stands for the M\"obius function on the incidence algebra of noncrossing partitions.
On the other hand, by using \eqref{eq:labeling} it is readily shown that the monotone moment-cumulant formula can be written as
\begin{equation}
\label{eq:monMC2}
	\varphi_n(a_1,\dots,a_n) = \sum_{\pi\in \NC(n)} \frac{1}{t(\pi)!} h_\pi(a_1,\dots,a_n),
\end{equation}
for any $a_1,\dots,a_n\in \A$. Unlike the free and Boolean cases, the coefficients $\frac{1}{t(\pi)!}$ are not multiplicative, so we cannot apply M\"obius inversion in order to express the monotone cumulants in terms of moments. The two subsequent sections of this note are devoted to describing the combinatorial tools used with the purpose of obtaining a formula analogous to \eqref{eq:freeCM} and \eqref{eq:BoolCM} for the monotone case.

\fi

\section{Shuffle approach for noncommutative probability}
\label{sec:shuffle}

In this section, we briefly describe the Hopf-algebraic framework of Ebrahimi-Fard and Patras developed in a series of recent articles \cite{EFP1, EFP2, EFP3, EFP4, EFP5}. The key ingredient in the group-theoretical framework of Ebrahimi-Fard and Patras is an unshuffle (or codendriform) structure on the double tensor algebra defined over $\A$. More precisely, given a noncommutative probability space~$(\A,\varphi)$, we consider the double tensor algebra
\[
	T(T_+(\A)) = \bigoplus_{n\geq0} T_+(\A)^{\otimes n},
	\qquad\text{where}\quad
	T_+(\A) = \bigoplus_{n\geq1} \A^{\otimes n}.
\]
In the following, we will omit the tensor symbol when denoting elements of $T_+(\A)$, that is, we consider them as words $w = a_1\cdots a_n \in \A^{\otimes n}$, where $a_1,\dots,a_n\in \A$. On the other hand, the product of words in $T(T_+(\A))$ is denoted by the bar notation, $w_1|\cdots |w_m$, where $w_1,\dots,w_m \allowbreak\in T_+(\A)$. Observe that the empty word, denoted by~$\uno$, works as the unit for the bar product. Moreover, $T(T_+(\A))$ has a multigrading and grading given by
\[
	T(T_+(\A))_{n_1,\dots,n_k}
	:= T_+(\A)^{\otimes n_1} \otimes \cdots \otimes T_+(\A)^{\otimes n_k},
\]
respectively,
\[
	T(T_+(\A))_n :=
	\bigoplus_{n_1+\cdots + n_k = n} T(T_+(\A))_{n_1,\dots,n_k},
\]
for any $n,n_1,\dots,n_k\geq1$. Additionally, we consider the algebra morphism $\epsilon\colon T(T_+(\A))\to\mathbb{C}$ defined to be zero on the augmentation ideal $T_+(T_+(\A)) = \bigoplus_{n\geq 1} T_+(\A)^{\otimes n}$ and $\epsilon(\uno) = 1$.

In order to describe the desired Hopf algebra structure on $T(T_+(\A))$, we have to present the coproduct. To this end,
 we describe the following notation. Given a subset $S\subseteq [n] :=\{1,\dots, n\}$, we define the connected components of $[n]\backslash S$ as the set of maximal intervals contained in $[n]\backslash S$. Now, given a word $w= a_1\cdots a_n$ and a subset $S = \{s_1 <\cdots < s_r\} \subseteq [n]$, we define
\[
	a_S = a_{s_1}\cdots a_{s_r},
\]
with the convention that $a_\emptyset = \uno$ (the unit of $T(T_+(\A))$). Also, if $J_1,\dots,J_r$ be the connected components of $[n] \backslash S$ ordered according to their minimum element, we denote
\[
	a_{J_{[n]}^S} = a_{J_1} | \cdots | a_{J_r}.
\]
With the above notations, we can state the desired coproduct: let $\Delta\colon T(T_+(\A))\to T(T_+(\A))\otimes T(T_+(\A))$ the algebra morphism defined by $\Delta(\uno) = \uno\otimes \uno$ and
\begin{equation}
\label{eq:copEFP}
	\Delta(a_1\cdots a_n)= \sum_{S\subseteq [n]} a_S \otimes a_{J_{[n]}^S}.
\end{equation}

\begin{Example}
For $n=4$, we have
\begin{align*}
	\Delta(a_1a_2a_3a_4)
={} & a_1a_2a_3a_4\otimes \uno + a_1 \otimes a_2a_3a_4 + a_2\otimes a_1| a_3a_4 + a_3 \otimes a_1a_2| a_4 + a_4\otimes a_1a_2a_3
	\\&\!{} + a_1a_2\otimes a_3a_4 + a_1a_3\otimes a_2|a_4 + a_1a_4\otimes a_2a_3 + a_2a_3\otimes a_1|a_4 + a_2a_4\otimes a_1|a_3
	\\&\!{} + a_3a_4\otimes a_1a_2 + a_1a_2a_3\otimes a_4 + a_1a_2a_4\otimes a_3 + a_1a_3a_4\otimes a_2 + a_2a_3a_4\otimes a_1
	\\&\!{} + \uno \otimes a_1a_2a_3a_4.
\end{align*}
\end{Example}
With the above definitions, we have the following result.

\begin{Theorem}[\cite{EFP1}]\label{thm:HopfAlg}
$(T(T_+(\A)),\Delta,\epsilon))$ is a connected graded noncommutative and noncocommutative Hopf algebra.
\end{Theorem}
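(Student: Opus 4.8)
The plan is to verify the coalgebra axioms and the grading directly, and then to invoke the standard existence theorem for antipodes on connected graded bialgebras. Since $\Delta$ and $\epsilon$ are prescribed as algebra morphisms and $T(T_+(\A))$ is the free associative algebra on the vector space $T_+(\A)$ (with respect to the bar product), both maps are well defined by their values on words $w = a_1\cdots a_n \in T_+(\A)$, and the two bialgebra compatibility axioms — that $\Delta$ and $\epsilon$ be algebra morphisms — hold by construction. Thus the only coalgebra axioms left to check are the counit property and coassociativity.

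The counit property is immediate: in $(\epsilon\otimes\id)\Delta(a_1\cdots a_n) = \sum_{S\subseteq[n]} \epsilon(a_S)\, a_{J_{[n]}^S}$ the factor $\epsilon(a_S)$ vanishes unless $a_S = \uno$, that is unless $S = \emptyset$, and the surviving term is $a_{[n]} = a_1\cdots a_n$; symmetrically, in $(\id\otimes\epsilon)\Delta$ only $S = [n]$ survives.

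Coassociativity is the heart of the argument and the step I expect to require the most care. Because $\Delta$ is an algebra morphism it suffices to check $(\Delta\otimes\id)\Delta = (\id\otimes\Delta)\Delta$ on a generator $w = a_1\cdots a_n$. I would compute both sides and show each equals a single sum indexed by ordered triples of disjoint subsets $[n] = A \sqcup B \sqcup C$, of the form $a_A \otimes u_B \otimes v_C$, where $u_B$ is the bar product obtained by cutting $B$ at the positions of $A$, and $v_C$ the bar product obtained by cutting $C$ at the positions of $A\cup B$. For the left-hand side the outer $\Delta$ selects $S' = A\cup B$ and then $\Delta(a_{S'})$ selects $A\subseteq S'$; the key point is that the connected components of $S'\setminus A = B$ computed inside the word $a_{S'}$ are exactly the runs of $B$ uninterrupted by $A$, since the positions of $C$ are invisible, being absent from $S'$. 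For the right-hand side the outer $\Delta$ selects $S = A$, and applying $\Delta$ to the bar product $a_{J_{[n]}^A}$ — using multiplicativity of $\Delta$ — amounts to choosing within each connected component $J_i$ of $[n]\setminus A$ a subset $U_i$ with $\bigcup_i U_i = B$, so that the middle slot becomes $a_{U_1}|\cdots|a_{U_r}$ and the right slot the bar product of the components of $J_i\setminus U_i$. The combinatorial identity to establish is that two elements of $B$ lie in the same middle block exactly when no element of $A$ separates them, and two elements of $C$ lie in the same right block exactly when no element of $A\cup B$ separates them — matching the left-hand side precisely. Checking this equality of cutting patterns, together with the agreement of the induced orderings of the bar-blocks, is the main obstacle; it is pure bookkeeping, but one must track carefully how ``connected components relative to a subset'' behave under iteration.

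Finally, the grading and connectedness are routine: the bar product maps $T(T_+(\A))_p \otimes T(T_+(\A))_q$ into $T(T_+(\A))_{p+q}$, and each term $a_S\otimes a_{J_{[n]}^S}$ of $\Delta(a_1\cdots a_n)$ has bidegree $(|S|,\, n-|S|)$, so $\Delta$ is graded; moreover $T(T_+(\A))_0 = \mathbb{C}\uno$ is one-dimensional, giving connectedness. A connected graded bialgebra is automatically a Hopf algebra, the antipode being produced by the standard recursion (equivalently Takeuchi's formula), which upgrades the bialgebra to a Hopf algebra. Noncommutativity and noncocommutativity are visible by inspection, for instance from $a_1|a_2 \neq a_2|a_1$ and from the asymmetry of $\Delta(a_1a_2)$, completing the proof.
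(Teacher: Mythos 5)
The paper gives no proof of this statement: it is imported from Ebrahimi-Fard and Patras~\cite{EFP1} (the text explicitly refers the reader there for definitions and proofs), so there is no internal argument to compare yours against. Judged on its own terms, your outline is the standard one and is essentially sound: the counit check, the reduction of coassociativity to generators via multiplicativity of $\Delta$, the identification of both iterated coproducts with a single sum over ordered decompositions $[n]=A\sqcup B\sqcup C$ governed by the rule that two elements of $B$ (resp.\ $C$) share a bar-block exactly when no element of $A$ (resp.\ $A\cup B$) separates them, the bidegree count $(|S|,n-|S|)$ for the grading, and the appeal to the antipode recursion for connected graded bialgebras are all the right steps; the combinatorial identity you isolate as the crux is indeed the correct one, although you describe rather than carry out the bookkeeping. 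One concrete slip: $\Delta(a_1a_2)=a_1a_2\otimes\uno+a_1\otimes a_2+a_2\otimes a_1+\uno\otimes a_1a_2$ is invariant under the tensor flip, so it does not witness noncocommutativity. The first genuine asymmetry occurs in degree three: the term $a_2\otimes a_1|a_3$ appears in $\Delta(a_1a_2a_3)$, whereas $a_1|a_3\otimes a_2$ cannot appear, because the left tensor leg of $\Delta$ evaluated on a generator is always a single word $a_S$ and never a proper bar product. Replace your degree-two example with such a degree-three term and the argument is complete.
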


The fundamental observation of the authors of \cite{EFP1} is that the coproduct defined in \eqref{eq:copEFP} can be split into two parts of the form
\[
	\Delta = \Delta_\prec + \Delta_\succ,
\]
where
\begin{gather*}
	\Delta_\prec (a_1\cdots a_n)
	= \sum_{1\in S\subseteq [n]} a_S \otimes a_{J_{[n]}^S},\qquad
 	\Delta_\succ (a_1\cdots a_n)
	= \sum_{1\not\in S\subseteq [n]} a_S \otimes a_{J_{[n]}^S}
\end{gather*}
for any word $a_1\cdots a_n\in \A^{\otimes n}$, $n\geq 1$. Both maps can be extended to $T(T_+(\A))$ by stating that
\begin{align*}
	\Delta_\prec(w_1|w_2|\cdots |w_n)
	&= \Delta_\prec(w_1)|\Delta(w_2|\cdots|w_n),\\
	\Delta_\succ(w_1|w_2|\cdots |w_n)
	&= \Delta_\succ(w_1)|\Delta(w_2|\cdots|w_n),
\end{align*}
for any words $w_1,\dots,w_n\in T_+(\A)$.

\begin{Theorem}[\cite{EFP1}]
\label{thm:unshuffle}
$(T(T_+(\A)),\Delta_\prec,\Delta_\succ)$ is a unital unshuffle bialgebra.
\end{Theorem}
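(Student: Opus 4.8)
The plan is to verify directly the defining axioms of a unital unshuffle bialgebra for the pair $(\Delta_\prec,\Delta_\succ)$, taking as input the already-established fact (Theorem~\ref{thm:HopfAlg}) that $\Delta=\Delta_\prec+\Delta_\succ$ is a coassociative, counital algebra morphism making $T(T_+(\A))$ a connected graded Hopf algebra. Writing $\tdelta$, $\tdelta_\prec$, $\tdelta_\succ$ for the reduced versions, obtained by discarding the terms containing $\uno$, what remains to be shown is: (i) the three codendriform coassociativity relations
\begin{gather*}
(\tdelta_\prec\otimes\id)\circ\tdelta_\prec=(\id\otimes\tdelta)\circ\tdelta_\prec,\qquad
(\tdelta_\succ\otimes\id)\circ\tdelta_\prec=(\id\otimes\tdelta_\prec)\circ\tdelta_\succ,\\
(\tdelta\otimes\id)\circ\tdelta_\succ=(\id\otimes\tdelta_\succ)\circ\tdelta_\succ,
\end{gather*}
and (ii) the compatibility of the half-coproducts with the bar product, which is precisely what the extension formulas for $\Delta_\prec$ and $\Delta_\succ$ on $w_1|\cdots|w_n$ encode.

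For (i), the key reduction is that it suffices to check the three relations on a single word $w=a_1\cdots a_n\in T_+(\A)$: once they hold on words, the extension formulas together with the coassociativity of $\Delta$ propagate them to arbitrary bar products, since on a product $w_1|\cdots|w_n$ the left-most tensor leg is governed by the half-coproduct of $w_1$ alone while the remaining legs are governed by the full $\Delta$. On a single word everything is explicit: $\Delta_\prec$ sums over subsets $S\ni 1$ and $\Delta_\succ$ over subsets $S\not\ni1$, the distinction being whether the position $1$, equivalently the global minimum, is sent to the left leg. Iterating, each side of a codendriform relation becomes a sum indexed by a pair of nested subsets $S'\subseteq S\subseteq[n]$, with the induced connected-component and bar structure on the complements, and the membership conditions on $1$ select precisely the flags in which $1$ lands in a prescribed leg. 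I would then match the two indexing sets of each relation by a direct bijection of such flags.

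For (ii), I would observe that the extension formulas
\[
\Delta_\prec(w_1|\cdots|w_n)=\Delta_\prec(w_1)|\Delta(w_2|\cdots|w_n),\qquad
\Delta_\succ(w_1|\cdots|w_n)=\Delta_\succ(w_1)|\Delta(w_2|\cdots|w_n)
\]
are exactly the unshuffle-bialgebra compatibility between the half-coproducts and the product, and that they are consistent with $\Delta=\Delta_\prec+\Delta_\succ$ because $\Delta$ is an algebra morphism. The unital axioms, namely the placement of the $\uno\otimes w$ and $w\otimes\uno$ terms into $\Delta_\succ$ and $\Delta_\prec$ respectively, follow by inspection from the ranges $1\in S\subseteq[n]$ and $1\notin S\subseteq[n]$ of the two summations.

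The main obstacle I anticipate is the bookkeeping in step (i): when a half-coproduct is applied twice, the connected components of $[n]\setminus S$ are themselves cut by the inner subset, and the ordered-forest and bar structure must be tracked carefully so that the minimum-element conditions distinguishing $\prec$ from $\succ$ line up on both sides. In particular, the asymmetry of the three relations comes entirely from where the global minimum $1$ is permitted to go at each stage, and the delicate point is to check that cutting-then-splitting versus splitting-then-cutting produces the same constraint on the position of $1$ in each of the three cases.
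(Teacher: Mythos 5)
The paper does not actually prove this theorem: it is quoted from Ebrahimi-Fard and Patras, with the reader referred to \cite[Theorem~5]{EFP1} for the definitions and the proof. Your proposal---direct verification of the three codendriform coassociativity relations by tracking nested subsets $S'\subseteq S\subseteq[n]$ and the location of the minimum $1$, together with the product compatibility encoded in the extension formulas and the unital conventions placing $w\otimes\uno$ in $\Delta_\prec$ and $\uno\otimes w$ in $\Delta_\succ$---is exactly the standard argument of the cited source, and the outline is correct. The only point I would press you on is the reduction to single words: after one application of $\Delta_\prec$ to $w_1|\cdots|w_n$ the first tensor leg is itself a bar product $a_S\cdot v'$ (with $a_S$ coming from $w_1$ and $v'$ from $\Delta(w_2|\cdots|w_n)$), so the second application of a half-coproduct again invokes the extension formula with $a_S$ as the leading word; this is where one uses that $1\in S$ forces $a_S\neq\uno$, and it deserves to be spelled out rather than absorbed into the phrase ``governed by the half-coproduct of $w_1$ alone.''
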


Unshuffle bialgebras are also called \textit{codendriform bialgebras} \cite{Foissy}. For details, including definitions and a proof of the above theorem, we refer the reader to \cite[Theorem~5]{EFP1}.

Theorem \ref{thm:unshuffle} permits to show that the algebraic dual of $(T(T_+(\A)),\Delta,\epsilon))$ is a unital shuffle algebra. More precisely, for $f,g\in \operatorname{Lin}(T(T_+(\A)),\mathbb{C})$, we can define the associative convolution product dual to~\eqref{eq:copEFP} by
\begin{equation}\label{eq:convol}
	f*g = m_{\mathbb{C}}\circ (f\otimes g)\circ \Delta,
\end{equation}
where $m_{\mathbb{C}}$ stands for the product on $\mathbb{C}$. The above product has the counit $\epsilon$ as its unit. Since we have a splitting of the coproduct $\Delta$, we can also consider the respective dual half-shuffle products associated to the half-unshuffle coproducts:
\begin{gather*}
	f\prec g
	= m_{\mathbb{C}}\circ (f\otimes g)\circ \Delta_{\prec},\qquad
	f\succ g
	= m_{\mathbb{C}}\circ (f\otimes g)\circ \Delta_{\succ}.
\end{gather*}
Neither of the above products is associative. However, they satisfy the noncommutative \textit{shuffle identities}
\begin{gather*}
	(f\prec g)\prec h =f\prec (g*h),\qquad\!
	(f\succ g)\prec h = f\succ (g\prec h),\qquad\!
	f\succ (g\succ h) = (f*g)\succ h,
\end{gather*}
for any $f,g,h\in \operatorname{Lin}(T(T_+(\A)),\mathbb{C})$. In other words, we have:
\begin{Proposition}[\cite{EFP1}]
$(\operatorname{Lin}(T(T_+(\A))),\prec,\succ)$ is a unital noncommutative shuffle algebra.
\end{Proposition}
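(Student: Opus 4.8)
The plan is to obtain the shuffle algebra structure directly by linear dualization of the unshuffle (codendriform) bialgebra structure provided by Theorem~\ref{thm:unshuffle}. Recall that a unital noncommutative shuffle algebra is a space carrying two bilinear products $\prec$ and $\succ$ such that $* := \prec + \succ$ is associative and the three shuffle identities
\begin{gather*}
(f\prec g)\prec h = f\prec (g*h), \qquad
(f\succ g)\prec h = f\succ (g\prec h), \qquad
f\succ (g\succ h) = (f*g)\succ h
\end{gather*}
hold for all $f,g,h$, together with the appropriate conditions on the unit. Since by definition $f\prec g = m_{\mathbb{C}}\circ(f\otimes g)\circ\Delta_\prec$ and $f\succ g = m_{\mathbb{C}}\circ(f\otimes g)\circ\Delta_\succ$, each half-product is the transpose of the corresponding half-coproduct, and $*$ is the transpose of $\Delta = \Delta_\prec + \Delta_\succ$.

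First I would observe that, upon iterating the definitions, both sides of each shuffle identity are obtained by applying the (associative) product on $\mathbb{C}$ together with $f\otimes g\otimes h$ to an associated pair of threefold coproducts. Concretely, the three shuffle identities are precisely the linear duals of the three codendriform coassociativity relations
\begin{gather*}
(\Delta_\prec\otimes\id)\circ\Delta_\prec = (\id\otimes\Delta)\circ\Delta_\prec, \qquad
(\Delta_\succ\otimes\id)\circ\Delta_\prec = (\id\otimes\Delta_\prec)\circ\Delta_\succ,
\\
(\Delta\otimes\id)\circ\Delta_\succ = (\id\otimes\Delta_\succ)\circ\Delta_\succ,
\end{gather*}
which are exactly the data packaged in the unshuffle bialgebra of Theorem~\ref{thm:unshuffle}. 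Thus each identity follows by pairing $f\otimes g\otimes h$ against the corresponding coidentity on $T(T_+(\A))$ and composing with $m_{\mathbb{C}}$; the associativity of $*$ follows in the same manner from the coassociativity of $\Delta$, which is part of Theorem~\ref{thm:HopfAlg}.

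The point requiring care, and what I expect to be the chief obstacle, is the unitality, since the half-products are genuinely defined only on the augmentation ideal and the counit $\epsilon$ must be inserted by hand. Here I would check that the placement of the terms involving $\uno$ in $\Delta_\prec$ and $\Delta_\succ$ matches the standard unital shuffle conventions. For a nonempty word $a_1\cdots a_n$ the index $1$ always lies in $S$ in $\Delta_\prec$, so the only summand with trivial right leg is $S=[n]$, giving $a_1\cdots a_n\otimes\uno$, while no summand has trivial left leg; dually $\Delta_\succ$ contains the single summand $S=\emptyset$, namely $\uno\otimes a_1\cdots a_n$. Transposing, this yields $f\prec\epsilon = f$, $\epsilon\prec f = 0$, $\epsilon\succ f = f$ and $f\succ\epsilon = 0$ for $f$ in the augmentation ideal, together with $\epsilon*\epsilon=\epsilon$, which are exactly the unital shuffle axioms. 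Assembling the three identities, the associativity of $*$, and these unit relations then gives the claim; for the detailed verification we refer to \cite[Theorem~5]{EFP1}.
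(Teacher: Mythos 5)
Your proposal is correct and follows essentially the same route as the paper, which obtains the shuffle algebra structure on $\operatorname{Lin}(T(T_+(\A)),\mathbb{C})$ by dualizing the unshuffle bialgebra structure of Theorem~\ref{thm:unshuffle} and otherwise defers the verification to \cite[Theorem~5]{EFP1}. Your identification of each shuffle identity with the transpose of the corresponding codendriform coassociativity relation, and your check of the unit conventions on the augmentation ideal, simply make explicit the details the paper leaves to the cited reference.
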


Recall that a map $\Phi \in \operatorname{Lin}(T(T_+(\A))$ is called a \textit{character} if $\Phi(\uno) = 1$ and $\Phi(w_1|w_2) = \Phi(w_1)\Phi(w_2)$ for any $w_1,w_2\in T(T_+(\A))$. Additionally, a map $\alpha \in \operatorname{Lin}(T(T_+(\A))$ is called an \textit{infinitesimal character} if $\alpha({\bf{1}}) = 0$ and $\alpha(w_1|\cdots| w_k) = 0$ for any $k\geq2$ and any $w_1,\dots,w_k \allowbreak \in T_+(A)$. It is possible to show that the set $G$ of characters on any Hopf algebra forms a group with respect to the convolution product \eqref{eq:convol}. On the other hand, the set $\g$ of infinitesimal characters on any Hopf algebra forms a Lie algebra with respect to the Lie bracket $[\gamma_1,\gamma_2] = \gamma_1*\gamma_2 - \gamma_2*\gamma_1$. From the general theory of Hopf algebras, the exponential map
\[
	 \g \ni \alpha \mapsto \sum_{n=0}^\infty \frac{\alpha^{*n}}{n!}\in G
 \]
is a bijection between $\g$ and $G$.

Recall that we have two more products, the half-shuffles $\prec$ and $\succ$, in the Hopf algebra $\operatorname{Lin}(T(T_+(\A)),\mathbb{C})$. Thus we can consider the left and right half-shuffle exponential maps~$\Expl$ and~$\Expr$ given by
\begin{gather*}
	\Expl(\alpha)
	= \sum_{n\geq0} \alpha^{\prec n},\qquad
	\Expr(\alpha)
	= \sum_{n\geq0} \alpha^{\succ n},
\end{gather*}
for $\alpha\in \operatorname{Lin}(T(T_+(\A)),\mathbb{C})$, where $\alpha^{\prec n} = \alpha \prec (\alpha^{\prec n-1})$, for $n\geq1$, and $\alpha^{\prec 0} = \uno$, and in an analogous way for $\alpha^{\succ n}$. It turns out that both half-shuffle exponentials also provide bijections between $\g$ and $G$. The left and right half-shuffle logarithms, i.e., the inverse maps of the half-shuffle exponentials, are denoted $\mathcal{L}_\prec\colon G \to \g$ respectively $\mathcal{L}_\succ\colon G \to \g$. The content of the main results in \cite{EFP1,EFP3} can be stated in the following theorem.

\begin{Theorem}[\cite{EFP1,EFP3}]\label{thm:basicEFP}
For $\Phi$ a character, there exists a unique triple $(\kappa,\beta,\rho)$ of infinitesimal characters such that
\begin{equation}\label{eq:shuffleMCRel}
	\Phi = \exp^*(\rho) = \Expl(\kappa) = \Expr(\beta).
\end{equation}
In particular, we have that $\kappa$ and $\beta$ are the unique solutions of the half-shuffle fixed point equations
\begin{equation}\label{eq:freeEq}
	\Phi = \epsilon + \kappa\prec \Phi,
\end{equation}
respectively
\begin{equation}\label{eq:booleanEq}
	\Phi = \epsilon +\Phi\succ \beta.
\end{equation}
Conversely, given $\alpha\in \g$, then $\exp^*(\alpha)$, $\Expl(\alpha)$ and $\Expr(\alpha)$ are characters.
\end{Theorem}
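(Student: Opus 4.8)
The plan is to base the whole statement on the two half-shuffle fixed point equations \eqref{eq:freeEq} and \eqref{eq:booleanEq}, extracting everything else from the connected grading of Theorem~\ref{thm:HopfAlg} together with the shuffle identities. I would treat $\Expl$ and $\kappa$ in detail; the statements for $\Expr$ and $\beta$ then follow by the mirror-symmetric argument, while the $\exp^*$--$\rho$ part of the theorem is the classical logarithm/exponential bijection between the Lie algebra $\g$ and the group $G$ of a connected graded Hopf algebra, which is already recorded in the excerpt.

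First I would record the fixed point reformulation. Writing $\alpha^{\prec0}=\uno$ for the convolution unit $\epsilon$ and reindexing, one computes directly that
\[
\epsilon+\alpha\prec\Expl(\alpha)=\epsilon+\sum_{n\geq0}\alpha^{\prec(n+1)}=\sum_{n\geq0}\alpha^{\prec n}=\Expl(\alpha),
\]
so $\Expl(\alpha)$ solves $X=\epsilon+\alpha\prec X$, and symmetrically $\Expr(\alpha)$ solves $X=\epsilon+X\succ\alpha$. Next I would use connectedness and the grading: evaluating $\Phi=\epsilon+\kappa\prec\Phi$ on a word $w$ of degree $n$, the only term of $(\kappa\prec\Phi)(w)$ produced by the full subset $S=[n]$ is $\kappa(w)\,\Phi(\uno)=\kappa(w)$, while every other term involves $\kappa$ or the character-side factor evaluated on pieces of strictly smaller degree. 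Reading $X=\epsilon+\kappa\prec X$ as an equation for $X$ with $\kappa$ fixed then yields a unique solution, and reading $\Phi=\epsilon+\kappa\prec\Phi$ as an equation for $\kappa$ with $\Phi$ fixed yields a unique linear solution $\kappa$. The first makes $\Expl$ determined by its argument and, combined with the second, shows that $\Expl$ is injective on $\g$.

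The heart of the proof is the converse direction: for $\alpha\in\g$ the functional $\Phi:=\Expl(\alpha)$ is a character, and dually the linear solution $\kappa$ attached to a character $\Phi$ is infinitesimal. Both are established by one induction on the grading showing compatibility with the bar product. For the first, I would prove $\Phi(u|v)=\Phi(u)\Phi(v)$ by induction on $|u|+|v|$, expanding $\Phi(u|v)=\epsilon(u|v)+(\alpha\prec\Phi)(u|v)$ and using the extension rule $\Delta_\prec(u|v)=\Delta_\prec(u)|\Delta(v)$ from Theorem~\ref{thm:unshuffle} together with the fact that $\alpha$, being infinitesimal, is supported on single bar-blocks; the shuffle identity $(f\prec g)\prec h=f\prec(g*h)$ is exactly what keeps this recursion consistent across degrees. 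Running the same bookkeeping in reverse shows that the $\kappa$ solving $\Phi=\epsilon+\kappa\prec\Phi$ vanishes on $\uno$ and on all bar products of length $\geq2$, i.e.\ $\kappa\in\g$. Granting this, for $\Phi\in G$ the solution $\kappa\in\g$ satisfies $\Phi=\epsilon+\kappa\prec\Phi$, so $\Phi$ and $\Expl(\kappa)$ solve the same fixed point equation and coincide by uniqueness, while uniqueness of $\kappa$ is the injectivity already proved. Assembling the $\prec$, $\succ$ and $*$ versions then produces the triple $(\kappa,\beta,\rho)$, its uniqueness, and the converse assertion.

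I expect the main obstacle to be precisely this inductive multiplicativity argument, where one must carefully track how the half-unshuffle coproducts interact with the bar product. Once the unshuffle-bialgebra compatibility of Theorem~\ref{thm:unshuffle} and the shuffle identities are in hand, the remaining steps are the formal fixed point manipulation and the degree-by-degree solvability guaranteed by the connected grading of Theorem~\ref{thm:HopfAlg}.
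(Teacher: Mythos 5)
This theorem is not proved in the paper at all: it is imported verbatim from \cite{EFP1,EFP3}, so there is no internal proof to compare against. Your argument is essentially the standard one from those references --- reduce everything to the half-shuffle fixed-point equations, solve them degree by degree using connectedness, and establish the character/infinitesimal-character correspondence by induction on the grading via the compatibility $\Delta_\prec(u|v)=\Delta_\prec(u)|\Delta(v)$ --- and the outline is correct; the only step you leave genuinely thin is the ``reverse bookkeeping'' showing that the unique linear solution $\kappa$ of $\Phi=\epsilon+\kappa\prec\Phi$ vanishes on bar-words of length at least two, which needs the same induction (using multiplicativity of $\Phi$) spelled out rather than asserted by symmetry with the forward direction.
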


From \eqref{eq:freeEq} and \eqref{eq:booleanEq} one deduces that the half-shuffle logarithms satisfy the shuffle equations $\mathcal{L}_\prec(\Phi)=(\Phi-\epsilon) \prec\Phi^{*-1}=\kappa$ respectively $\mathcal{L}_\succ(\Phi)=\Phi^{*-1} \succ (\Phi-\epsilon)=\beta$.

 With the above machinery, we can effectively describe the moment-cumulant relations in noncommutative probability in an algebraic framework. To this end, consider a~noncommutative probability space $(\A,\varphi)$ and the double tensor algebra $T(T_+(\A))$. The linear functional $\varphi$ can be lifted to a character $\Phi$ on $T(T_+(\A))$ in the following way: if $w = a_1\cdots a_n \in T_+(\A)$, we define $\Phi(w) = \varphi(a_1\cdots a_n)$ (recall that the product in the argument of~$\varphi$ is the product of the algebra~$\A$) and extend this definition linearly and multiplicatively to a general element in~$T(T_+(\A))$. In general, given a family of multilinear functionals $\{f_n\colon \A^n\to \mathbb{C} \}_{n\geq1}$, we define the infinitesimal character~$\alpha \in g$ associated to the family $\{f_n\}_{n\geq1}$ by $\alpha(w) := f_n(a_1,\dots,a_n)$ if $w= a_1\cdots a_n$. The next theorem states the link between noncommutative probability and the three bijections appearing in Theorem~\ref{thm:basicEFP}.

\begin{Theorem}[\cite{EFP1,EFP3}]\label{thm:linkNCP}
Let $(\A,\varphi)$ be a noncommutative probability space and let $\Phi\colon T(T_+(\A)) \allowbreak \to \mathbb{C}$ the extension of $\varphi$ to a character as described above. Let $(\kappa,\beta,\rho)$ be the unique triple of infinitesimal characters given in Theorem~{\rm \ref{thm:basicEFP}}. Then for every word $w = a_1\cdots a_n \in T_+(\A)$, we have that
\[
	\kappa(w) = r_n(a_1,\dots,a_n),
	\qquad
	\beta(w) = b_n(a_1,\dots,a_n),
	\qquad
	\rho(w) = h_n(a_1,\dots,a_n).
\]
In other words, $\kappa$, $\beta$ and $\rho$ are the infinitesimal characters on $T(T_+(\A))$ associated to the free, Boolean, and monotone cumulant functionals, respectively.
\end{Theorem}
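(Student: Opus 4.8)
The plan is to treat the three identities separately, in each case showing that the shuffle-algebraic characterization of the relevant infinitesimal character reproduces, term by term, the defining moment-cumulant recursion, and then concluding by induction on $n$ (equivalently, by uniqueness of the solution of the recursion). Throughout I use that $\Phi$ is a character, so it factorizes over the bar product, and that an infinitesimal character vanishes on any $w_1|\cdots|w_k$ with $k\ge 2$; both facts interact with the explicit (half-)coproducts to single out exactly the combinatorial terms appearing on the right-hand sides.

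For the free case I would start from the left fixed-point equation \eqref{eq:freeEq}, $\Phi = \epsilon + \kappa\prec\Phi$. Evaluating on a word $w=a_1\cdots a_n$ with $n\ge 1$ and inserting the explicit form of $\Delta_\prec$ gives $\varphi_n(a_1,\dots,a_n)=\sum_{1\in S\subseteq[n]}\kappa(a_S)\,\Phi(a_{J_{[n]}^S})$, where the character property turns $\Phi(a_{J_{[n]}^S})$ into the product of $\varphi$ over the connected components of $[n]\setminus S$. This is precisely the standard recursion obtained from \eqref{eq:freeMC} by isolating the block $S$ containing $1$: summing the free moment-cumulant relation over all noncrossing partitions whose block of $1$ is $S$ fills each gap $J$ with an independent noncrossing partition and contributes $\varphi_{|J|}$. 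Hence $\kappa$ and $\{r_n\}$ satisfy the same recursion with the same input $\varphi$, and induction on $n$ yields $\kappa(w)=r_n(a_1,\dots,a_n)$. The Boolean case is entirely analogous, starting instead from \eqref{eq:booleanEq}, $\Phi = \epsilon + \Phi\succ\beta$. Now the infinitesimal-character property of $\beta$ is essential: in $(\Phi\succ\beta)(w)=\sum_{1\notin S\subseteq[n]}\Phi(a_S)\beta(a_{J_{[n]}^S})$ the factor $\beta(a_{J_{[n]}^S})$ vanishes unless $[n]\setminus S$ is a single interval, which (since $1\notin S$) forces $[n]\setminus S=\{1,\dots,m\}$ and $S=\{m+1,\dots,n\}$. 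The resulting identity $\varphi_n=\sum_{m=1}^n\beta(a_1\cdots a_m)\,\varphi_{n-m}(a_{m+1},\dots,a_n)$ is exactly the recursion from \eqref{eq:BoolMC} obtained by splitting off the first interval block, so induction gives $\beta(w)=b_n(a_1,\dots,a_n)$.

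The monotone identity $\rho(w)=h_n(a_1,\dots,a_n)$ is the main obstacle, since it is governed by the associative exponential $\exp^*$ rather than by a one-step half-shuffle fixed point. Here I would expand $\Phi=\exp^*(\rho)=\sum_{k\ge 0}\frac{1}{k!}\rho^{*k}$ and analyze $\rho^{*k}(a_1\cdots a_n)$ through the iterated coproduct, using the infinitesimal-character property of $\rho$ to discard every tensor slot that is not a single word. The surviving terms are indexed by sequences of blocks that assemble a noncrossing partition $\pi$ with $|\pi|=k$ blocks in an order compatible with the nesting order of $\pi$ — that is, by monotone partitions refining $\pi$ — each contributing $h_\pi$. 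The combinatorial heart is then the identification of this multiplicity with the number of linear extensions of the forest of nestings $t(\pi)$, which equals $|\pi|!/t(\pi)!$.

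Feeding this count back, $\Phi=\sum_{\pi\in\NC(n)}\frac{1}{|\pi|!}\cdot\frac{|\pi|!}{t(\pi)!}\,h_\pi=\sum_{\pi\in\NC(n)}\frac{1}{t(\pi)!}h_\pi$, which is exactly the monotone moment-cumulant relation \eqref{eq:monMC}; uniqueness of its solution then gives $\rho=\{h_n\}$. I expect the delicate point to be the precise bookkeeping of the iterated coproduct — verifying that peeling blocks off one at a time through $\Delta$ produces each monotone partition exactly once and never an illegal (crossing or wrongly ordered) configuration — together with the linear-extension count $|\pi|!/t(\pi)!$ for the tree factorial, both of which I would prove by induction on $n$ using that $\Delta$ is an algebra morphism for the bar product.
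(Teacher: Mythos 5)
This theorem is quoted in the paper from \cite{EFP1,EFP3} and no proof is given there, so there is nothing internal to compare against; judged on its own, your proposal is a correct reconstruction of the argument in those references. The free and Boolean parts are complete as written: evaluating the fixed-point equations \eqref{eq:freeEq} and \eqref{eq:booleanEq} on a word, using that $\Phi$ is a character and that $\kappa$, $\beta$ are infinitesimal characters, does reproduce exactly the ``peel off the block of $1$'' recursion hidden in \eqref{eq:freeMC} and the ``peel off the first interval block'' recursion hidden in \eqref{eq:BoolMC}, and uniqueness of the solutions finishes the induction. For the monotone part you have correctly identified the two combinatorial facts that carry the proof: the terms of $\rho^{*k}(a_1\cdots a_n)$ surviving the infinitesimal-character constraint are indexed by noncrossing partitions $\pi$ with $k$ blocks together with an ordering of the blocks compatible with the nesting order (the last tensor slot is always an interval, i.e., maximal, block, which is what makes the peeling recursion close up), and the number of such orderings is the number of linear extensions of $t(\pi)$, namely $|\pi|!/t(\pi)!$, so that $\exp^*(\rho)$ reproduces \eqref{eq:monMC}. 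These two points are only sketched rather than proved in your write-up, but they are the standard content of the shuffle-algebra proof of Ebrahimi-Fard and Patras, and the inductions you indicate do go through; I see no gap in the approach.
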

The previous theorem allows us to recover the moment-cumulant formulas in terms of noncrossing partitions when we evaluate the equations in \eqref{eq:shuffleMCRel} in a word $w= a_1\cdots a_n\in T_+(\A)$:
\begin{gather*}
	\Expl(\kappa)(a_1\cdots a_n)
	= \sum_{\pi\in \NC(n)} r_\pi(a_1,\dots,a_n),\\
	\Expr(\beta)(a_1\cdots a_n)
	= \sum_{\pi\in \Int(n)} b_\pi(a_1,\dots,a_n),\\
	\exp^*(\rho)(a_1\cdots a_n)
	= \sum_{\pi\in \NC(n)} \frac{1}{t(\pi)!}h_\pi(a_1,\dots,a_n).
\end{gather*}

\section{A Hopf algebra of Schr\"oder trees}\label{sec:schroder}

The second combinatorial object needed to explain our desired formula for expressing monotone cumulants in terms of moments is a special case of planar rooted trees along with a rich associated algebraic structure. The objective of this section is to describe a Hopf algebra on such rooted trees and a coalgebra morphism that will allow lifting the problem of computing the shuffle logarithm of the character $\Phi$ associated to a noncommutative probability space~$(\A,\varphi)$.

\begin{Definition}A \textit{Schr\"oder tree} is a planar rooted tree such that each of its internal vertices has at least two descendants. For any $n\geq0$, we will denote by $\ST(n)$ the set of Schr\"oder trees with $n+1$ leaves.
\end{Definition}
\begin{Example}
The following list contains the elements of $\ST(3)$:

\begin{figure}[h]\centering
\begin{tabular}{c c c c c c }
\begin{tikzpicture}[scale=0.7,
level 1/.style={level distance=7mm,sibling distance=7mm}]
\node(0)[solid node,label=above:{}]{}
child{node(1)[hollow node]{}
}
child{node(2)[hollow node]{}
}
child{[black] node(11)[hollow node]{}
}
child{[black] node(111)[hollow node]{}
};
\end{tikzpicture} & \begin{tikzpicture}[scale=0.7,
level 1/.style={level distance=7mm,sibling distance=7mm}]
\node(0)[solid node,label=above:{}]{}
child{node(1)[hollow node]{}
}
child{node(2)[solid node]{}
	child{[black] node(11)[hollow node]{}}
	child{[black] node(111)[hollow node]{}}
	child{[black] node(1111)[hollow node]{}}
};
\end{tikzpicture} &
\begin{tikzpicture}[scale=0.7,
level 1/.style={level distance=7mm,sibling distance=7mm}]
\node(0)[solid node,label=above:{}]{}
child{node(1)[solid node]{}
	child{[black] node(11)[hollow node]{}}
	child{[black] node(111)[hollow node]{}}
}
child{node(2)[hollow node]{}
}
child{node(3)[hollow node]{}};
\end{tikzpicture} & \begin{tikzpicture}[scale=0.7,
level 1/.style={level distance=7mm,sibling distance=7mm}]
\node(0)[solid node,label=above:{}]{}
child{node(1)[solid node]{}
	child{[black] node(11)[hollow node]{}}
	child{[black] node(111)[hollow node]{}}
	child{[black] node(1111)[hollow node]{}}
}
child{node(2)[hollow node]{}
};
\end{tikzpicture} & \begin{tikzpicture}[scale=0.7,
level 1/.style={level distance=7mm,sibling distance=7mm}]
\node(0)[solid node,label=above:{}]{}
child{node(1)[hollow node]{}
}
child{node(2)[hollow node]{}
}
child{node(3)[solid node]{}
	child{[black] node(11)[hollow node]{}}
	child{[black] node(111)[hollow node]{}}};
\end{tikzpicture} &\begin{tikzpicture}[scale=0.7,
level 1/.style={level distance=7mm,sibling distance=7mm}]
\node(0)[solid node,label=above:{}]{}
child{node(1)[hollow node]{}
	}
child{node(2)[solid node]{}
	child{[black] node(11)[hollow node]{}}
	child{[black] node(111)[hollow node]{}}
}
child{node(3)[hollow node]{}
};
\end{tikzpicture} \\ a)& b)& c)& d)& e) & f) \\ \begin{tikzpicture}[scale=0.7,
level 1/.style={level distance=7mm,sibling distance=7mm}]
\node(0)[solid node,label=above:{}]{}
child{node(1)[solid node]{}
 	child{[black] node(11)[solid node]{}
		child{[black] node(112)[hollow node]{}}
		child{[black] node(1112)[hollow node]{}}
	}
	child{[black] node(111)[hollow node]{}}
	}
child{node(2)[hollow node]{}
};
\end{tikzpicture} & \begin{tikzpicture}[scale=0.7,
level 1/.style={level distance=7mm,sibling distance=7mm}]
\node(0)[solid node,label=above:{}]{}
child{node(1)[solid node]{}
 	child{[black] node(11)[hollow node]{}
	}
	child{[black] node(111)[solid node]{}
		child{[black] node(112)[hollow node]{}}
		child{[black] node(1112)[hollow node]{}}
	}	
	}
child{node(2)[hollow node]{}
};
\end{tikzpicture} & \begin{tikzpicture}[scale=0.7,
level 1/.style={level distance=7mm,sibling distance=7mm}]
\node(0)[solid node,label=above:{}]{}
child{node(1)[hollow node]{}	
	}
child{node(2)[solid node]{}
	child{[black] node(11)[solid node]{}
		child{[black] node(112)[hollow node]{}}
		child{[black] node(1112)[hollow node]{}}
	}
	child{[black] node(111)[hollow node]{}	}
};
\end{tikzpicture} & \begin{tikzpicture}[scale=0.7,
level 1/.style={level distance=7mm,sibling distance=7mm}]
\node(0)[solid node,label=above:{}]{}
child{node(1)[hollow node]{}
	}
child{node(2)[solid node]{}
	child{[black] node(11)[hollow node]{}
	}
	child{[black] node(111)[solid node]{}
		child{[black] node(112)[hollow node]{}}
		child{[black] node(1112)[hollow node]{}}
	}
};
\end{tikzpicture} & \begin{tikzpicture}[scale=0.7,
level 1/.style={level distance=8mm,sibling distance=12mm},
level 2/.style={level distance=8mm,sibling distance=6mm},
]
\node(0)[solid node,label=above:{}]{}
child{node(1)[solid node]{}
	child{[black] node(112)[hollow node]{}}
		child{[black] node(1112)[hollow node]{}}
	}
child{node(2)[solid node]{}
	child{[black] node(11)[hollow node]{}
	}
	child{[black] node(111)[hollow node]{}
	}
};
\end{tikzpicture} & \\ g)& h) &i) &j) &k) &
\end{tabular}
\vspace{0.5cm}
\caption{Schr\"oder trees with four leaves. Internal vertices are black-coloured, while leaves are white-coloured.}
\label{fig:st3}
\end{figure}
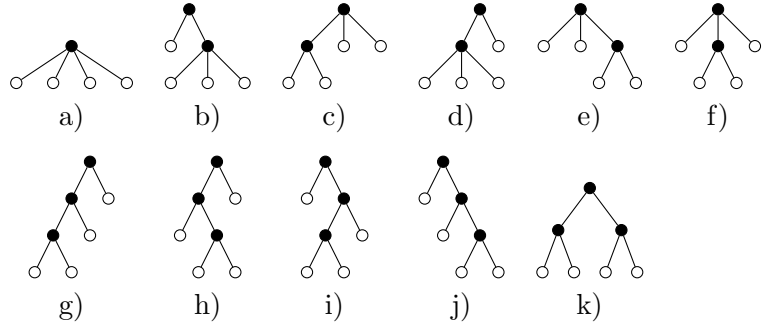
\end{Example}

Schr\"oder trees were used extensively in reference \cite{JMNT} in order to describe the functional equation between the moment series and the $R$-transform of noncommutative random variables from an operadic point of view. The authors of \cite{JMNT} also introduced a Hopf algebra related to their operad of Schr\"oder trees and found relations with the works \cite{EFP1,EFP3}. The purpose of the paper \cite{JMNT} is to understand the free cumulants as linear functionals on an unshuffle bialgebra more general than the double tensor algebra.

The description of the aforementioned Hopf algebra is as follows: let $\cHS$ be the noncommutative polynomial algebra whose indeterminates are given by the elements of the set of Schr\"oder trees $\ST = \bigcup_{n\geq0} \ST(n)$. More precisely
\[
	\cHS = \mathbb{C}\langle t\colon t\in \ST\rangle/ (\circ -1),
\]
where we identify $\circ$, the unique element of $\ST(0)$, with the complex unit $1$.

In order to describe the coproduct on Schr\"oder trees, we have to consider the notion of {admissible cut} on a tree $t$.

\begin{Definition}
\label{def:admcut}
Let $t$ be a Schr\"oder tree. An \textit{admissible cut on $t$} is a subset $c$ of internal vertices of $t$ such that for any path from the root to any leaf, there is at most one vertex of the path contained in $c$.
\end{Definition}

Given an admissible cut $c$, it is clear that the elements in $c$ can be naturally ordered from left to right according to their position in the Schr\"oder tree. This natural order allows us to give a~well-defined coproduct.

\begin{Definition}\label{def:PrunCuts}
Let $t$ be a Schr\"oder tree and $c$ be an admissible cut of $t$.
\begin{enumerate}\itemsep=0pt
\item[i)] We define the \textit{pruning of $t$} associated to $c$ as the ordered forest of Schr\"oder trees $P_c(t)$ formed by the subtrees obtained by cutting the edge above each element of $c$.
\item[ii)] We define the \textit{trunk of $t$} associated to $c$ as the Schr\"oder tree $R_c(t)$ obtained by replacing in $t$ each subtree of $P_c(t)$ by a leaf.
\end{enumerate}
\end{Definition}

\begin{Remark}In the context of the previous definition, we should notice that if $c$ is the admissible cut only containing the root of a Schr\"oder tree~$t$, then we set~$R_c(t) = \circ$ and $P_c(t) = t$.
\end{Remark}

\begin{Remark}Observe that given an admissible cut $c$ of a Schr\"oder tree $t$, the trunk, $R_c(t)$, is a~Schr\"oder tree and the ordered forest, $P_c(t)$, can be seen as a noncommutative monomial in~$\cHS$. Also, each element in $P_c(t)$ has a unique vertex in $c$ as its root.
\end{Remark}

With the above definition, the coproduct on $\cHS$ can be defined as the algebra morphism $\delta\colon \cHS\to \cHS\otimes \cHS$ given by
\begin{equation}\label{eq:coproductH}
	\delta(t) = \sum_{c \in \operatorname{Adm}(t)} R_c(t)\otimes P_c(t),
\end{equation}
for any Schr\"oder tree $t \in \ST$, where $\operatorname{Adm}(t)$ stands for the set of admissible cuts of~$t$.

\begin{Remark}
One should notice that Definitions \ref{def:admcut} and \ref{def:PrunCuts} can be directly extended to the case of Schr\"oder forests, i.e., noncommutative monomials of Schr\"oder trees. The multiplicative of the coproduct $\delta$ is then represented by the fact that if $f$ is a Schr\"oder forest, then $\delta(f)$ is defined with the same expression that of~\eqref{eq:coproductH} but now considering the corresponding definition of trunk and pruning of a Schr\"oder forest $f$.
\end{Remark}

\begin{Example}\label{ex:schrodertree}
We have the following simple computations of the coproduct $\delta$:
\[
\delta\bigg( \begin{tikzpicture}[baseline={([yshift=-1.4ex]current bounding box.center)},scale=0.4,
level 1/.style={level distance=8mm,sibling distance=12mm},
level 2/.style={level distance=8mm,sibling distance=6mm},
]
\node(0)[solid node,label=above:{}]{}
child{node(1)[solid node]{}
	child{[black] node(112)[hollow node]{}}
		child{[black] node(1112)[hollow node]{}}
	}
child{node(2)[solid node]{}
	child{[black] node(11)[hollow node]{}
	}
	child{[black] node(111)[hollow node]{}
	}
};
\end{tikzpicture} \bigg) = \begin{tikzpicture}[scale=0.4,
level 1/.style={level distance=7mm,sibling distance=7mm}]
\node(0)[hollow node,label=above:{}]{};\end{tikzpicture} \otimes \begin{tikzpicture}[scale=0.4,
level 1/.style={level distance=8mm,sibling distance=12mm},
level 2/.style={level distance=8mm,sibling distance=6mm},
]
\node(0)[solid node,label=above:{}]{}
child{node(1)[solid node]{}
	child{[black] node(112)[hollow node]{}}
		child{[black] node(1112)[hollow node]{}}
	}
child{node(2)[solid node]{}
	child{[black] node(11)[hollow node]{}
	}
	child{[black] node(111)[hollow node]{}
	}
};
\end{tikzpicture} +\begin{tikzpicture}[scale=0.4,
level 1/.style={level distance=8mm,sibling distance=12mm},
level 2/.style={level distance=8mm,sibling distance=6mm},
]
\node(0)[solid node,label=above:{}]{}
child{node(1)[solid node]{}
	child{[black] node(112)[hollow node]{}}
		child{[black] node(1112)[hollow node]{}}
	}
child{node(2)[solid node]{}
	child{[black] node(11)[hollow node]{}
	}
	child{[black] node(111)[hollow node]{}
	}
};
\end{tikzpicture} \otimes \begin{tikzpicture}[scale=0.4,
level 1/.style={level distance=7mm,sibling distance=7mm}]
\node(0)[hollow node,label=above:{}]{};\end{tikzpicture} + \begin{tikzpicture}[scale=0.4,
level 1/.style={level distance=8mm,sibling distance=8mm},
]
\node(0)[solid node,label=above:{}]{}
child{node(1)[hollow node]{}
	}
child{node(2)[solid node]{}
child{[black] node(11)[hollow node]{}
	}
	child{[black] node(111)[hollow node]{}
	}
};
\end{tikzpicture}\otimes \begin{tikzpicture}[scale=0.4,
level 1/.style={level distance=8mm,sibling distance=8mm},
]
\node(0)[solid node,label=above:{}]{}
child{node(1)[hollow node]{}
	}
child{node(2)[hollow node]{}
};
\end{tikzpicture} + \begin{tikzpicture}[scale=0.4,
level 1/.style={level distance=8mm,sibling distance=8mm},
]
\node(0)[solid node,label=above:{}]{}
child{node(1)[solid node]{}
	child{[black] node(11)[hollow node]{}
	}
	child{[black] node(111)[hollow node]{}
	}
	}
child{node(2)[hollow node]{}
};
\end{tikzpicture}\otimes \begin{tikzpicture}[scale=0.4,
level 1/.style={level distance=8mm,sibling distance=8mm},
]
\node(0)[solid node,label=above:{}]{}
child{node(1)[hollow node]{}
	}
child{node(2)[hollow node]{}
};
\end{tikzpicture} + \begin{tikzpicture}[scale=0.4,
level 1/.style={level distance=8mm,sibling distance=8mm},
]
\node(0)[solid node,label=above:{}]{}
child{node(1)[hollow node]{}
	}
child{node(2)[hollow node]{}
};
\end{tikzpicture} \otimes \begin{tikzpicture}[scale=0.4,
level 1/.style={level distance=8mm,sibling distance=8mm},
]
\node(0)[solid node,label=above:{}]{}
child{node(1)[hollow node]{}
	}
child{node(2)[hollow node]{}
};
\end{tikzpicture} \;\begin{tikzpicture}[scale=0.4,
level 1/.style={level distance=8mm,sibling distance=8mm},
]
\node(0)[solid node,label=above:{}]{}
child{node(1)[hollow node]{}
	}
child{node(2)[hollow node]{}
};
\end{tikzpicture}\; .
\]
\end{Example}

For a tree $t\in \ST(n)$, we define the \textit{degree of t} by $\operatorname{deg}(t) = n$. More generally, for an ordered forest of Schr\"oder trees $f = t_1\cdots t_m$, we define
\[
	\operatorname{deg}(f) = \operatorname{deg}(t_1) + \cdots + \operatorname{deg}(t_m).
\]
Furthermore, the algebra $\cHS$ is graded, with the component $\cHS(n)$ of degree $n$ given by the linear span of all ordered forests of degree $n$ for every $n\geq0$.

Finally, for the counit, we consider the algebra morphism $\epsilon\colon \cHS\to\mathbb{C}$ given by
\[
	\varepsilon(t) = \begin{cases}1 & \text{if $t=\circ$,}\\ 0 &\text{if $t\in \cHS(n)$ for $n\geq1$.}\end{cases}
\]
With the above ingredients, we have the following result.

\begin{Proposition}[\cite{JMNT}]
$(\cHS,\delta,\varepsilon)$ is a connected graded Hopf algebra.
\end{Proposition}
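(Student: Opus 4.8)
The plan is to verify the algebra, coalgebra, bialgebra, grading, and connectedness axioms in turn, observing that all but one are immediate from the way $\delta$ and $\varepsilon$ were defined. By construction $\cHS$ is the free noncommutative polynomial algebra on $\ST$ with $\circ$ identified with the unit $1$, so associativity and the unit axiom are automatic, and the total-degree grading is multiplicative because $\deg(fg)=\deg(f)+\deg(g)$ for forests. Connectedness holds since the only degree-$0$ forests are products of copies of $\circ=1$, whence $\cHS(0)=\mathbb{C}$. Since $\delta$ and $\varepsilon$ were both \emph{defined} as algebra morphisms, the two bialgebra compatibility conditions hold by fiat, and it remains only to check that $(\cHS,\delta,\varepsilon)$ is a coassociative, counital, graded coalgebra.

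For the grading of $\delta$ I would do the degree bookkeeping of an admissible cut $c=\{v_1,\dots,v_k\}$ of a tree $t$ with $L$ leaves: admissibility forces the subtrees rooted at the $v_i$ to be pairwise disjoint, so if the subtree at $v_i$ has $\ell_i$ leaves then $P_c(t)$ has degree $\sum_i(\ell_i-1)$, while $R_c(t)$, obtained by collapsing each such subtree to a single leaf, has $L-\sum_i\ell_i+k$ leaves and hence degree $L-\sum_i\ell_i+k-1$; adding these gives $L-1=\deg(t)$, so $\delta(\cHS(n))\subseteq\bigoplus_{i+j=n}\cHS(i)\otimes\cHS(j)$. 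The counit axiom is equally direct: in $\delta(t)=\sum_{c}R_c(t)\otimes P_c(t)$ the only cut with $P_c(t)$ the empty forest ($=1$) is $c=\emptyset$, contributing $t\otimes 1$, and the only cut with $R_c(t)=\circ$ is the singleton cut at the root, contributing $1\otimes t$; hence $(\id\otimes\varepsilon)\delta(t)=t=(\varepsilon\otimes\id)\delta(t)$.

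The heart of the proof, and the only genuinely nontrivial step, is coassociativity. Expanding both iterated coproducts gives
\[
(\id\otimes\delta)\delta(t)=\sum_{c\in\operatorname{Adm}(t)}\ \sum_{c'\in\operatorname{Adm}(P_c(t))} R_c(t)\otimes R_{c'}(P_c(t))\otimes P_{c'}(P_c(t))
\]
and
\[
(\delta\otimes\id)\delta(t)=\sum_{c\in\operatorname{Adm}(t)}\ \sum_{c''\in\operatorname{Adm}(R_c(t))} R_{c''}(R_c(t))\otimes P_{c''}(R_c(t))\otimes P_c(t).
\]
I would prove these agree by setting up a bijection onto a common index set of \emph{layered double cuts}: pairs $(c_1,c_2)$ of admissible cuts of $t$ with $c_1$ lying above $c_2$ (no vertex of $c_1$ is a proper descendant of a vertex of $c_2$, and the combined cut still meets each root-to-leaf path admissibly in each of the two layers). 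Such a pair dissects $t$ into three planar Schr\"oder forests — the trunk above $c_1$, the middle layer between $c_1$ and $c_2$, and the top forest below $c_2$ — which I would identify with the three tensor factors on each side by matching $c_1=c$, $c_2=c'$ on the first side and $c_1=c''$, $c_2=c$ on the second. The crux is the two structural facts that an admissible cut of $P_c(t)$ is exactly an admissible cut of $t$ lying below $c$, and an admissible cut of $R_c(t)$ is exactly one lying above $c$, together with the compatibility showing that the trunk and pruning operations commute so that the three forests produced by the two orders of cutting literally coincide.

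The main obstacle is tracking the planar (ordered) structure through this bijection: because Schr\"oder forests are ordered left to right and $\delta$ records this order, I must check that collapsing or detaching subtrees never permutes the left-to-right order of the resulting components, so that the two ``middle'' pieces $R_{c'}(P_c(t))$ and $P_{c''}(R_c(t))$ agree as \emph{planar} objects rather than merely up to reordering; the degenerate cases where an inner cut sits at the root of a pruned component (forcing a trivial $\circ$ contribution to the middle layer) also require separate care. Once coassociativity is in hand, no antipode needs to be constructed by hand: a connected graded bialgebra admits a unique antipode, defined recursively along the grading, so $(\cHS,\delta,\varepsilon)$ is automatically a connected graded Hopf algebra.
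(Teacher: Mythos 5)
The paper does not prove this proposition at all: it is quoted from [JMNT], so there is no in-paper argument to compare against, and your write-up is doing work the authors delegate to the reference. On its own terms your route is the standard Connes--Kreimer-style argument and is essentially sound. The easy parts are all handled correctly: the algebra axioms and multiplicativity of the grading are indeed automatic from the free construction, $\cHS(0)=\mathbb{C}$ because every degree-$0$ forest is a power of $\circ=1$, the compatibility of $\delta$ and $\varepsilon$ with the product holds by definition, your degree count for an admissible cut is right (disjointness of the pruned subtrees follows from admissibility, and the leaf count of $R_c(t)$ is exactly as you say), and the counit axiom follows because every component of a nonempty pruning has at least two leaves, so $\varepsilon(P_c(t))=0$ unless $c=\emptyset$, while $R_c(t)=\circ$ forces $c=\{\mathrm{root}\}$. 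One tiny omission: you should also note that $\delta$ is well defined on the quotient by $(\circ-1)$, i.e., $\delta(\circ)=\circ\otimes 1=1\otimes 1$, which holds since $\circ$ has no internal vertices.

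The only place where genuine work remains is the coassociativity bijection, and you have correctly located the delicate point but not resolved it. Be aware that the two iterated coproducts are \emph{not} indexed by literally the same set of pairs of cuts: on the $(\id\otimes\delta)\delta$ side the inner cut $c'$ lives in $\operatorname{Adm}(P_c(t))$ and may contain roots of components of $P_c(t)$, i.e., vertices of $c$ itself, whereas on the $(\delta\otimes\id)\delta$ side the inner cut $c''$ lives in $\operatorname{Adm}(R_c(t))$ and is automatically disjoint from $c$. For example, the term $R_{\{v\}}(t)\otimes \circ\otimes t_v$ (with $t_v$ the subtree at an internal vertex $v$) arises from the pair $(c,c')=(\{v\},\{v\})$ on one side but from $(c'',c)=(\emptyset,\{v\})$ on the other. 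So the clean way to finish is not to identify pairs of cuts but to match \emph{triples}: parametrize both sides by the ordered pair of disjoint vertex sets (middle layer, top layer) obtained after discarding the degenerate root-cuts, checking that a component cut at its own root contributes the unit $\circ$ to the middle tensor factor and hence changes nothing. The two structural facts you state (cuts of $P_c(t)$ are cuts of $t$ weakly below $c$; cuts of $R_c(t)$ are cuts of $t$ not weakly below $c$) plus the observation that collapsing and detaching subtrees preserve the planar left-to-right order then complete the argument, and the antipode comes for free from connectedness as you say.
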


For our purposes of computing multivariate moments and cumulants of random variables, the above Hopf algebra can be enhanced in order to consider decorations. Indeed, let $\A$ be vector space over $\mathbb{C}$ and consider the tensor algebra $T(\A) = \bigoplus_{n\geq0} \A^{\otimes n}$. Define the algebra given by
\[
	\cHS(\A) = \bigoplus_{n\geq0} \big(\cHS(n) \otimes \A^{\otimes n}\big),
\]
with the product being inherited by the algebra structures of $\cHS$ and $T(\A)$. Now, consider an element $t\otimes w\in \cHS(n) \otimes \A^{\otimes n}$ where $w = a_1\cdots a_n$; here we omit the tensor notation of the elements in $\A^{\otimes n}$ in the same way we did in Section~\ref{sec:shuffle}. This pure tensor element can be identified with a~decorated Schr\"oder tree, where we label the $n$ sectors between the $n+1$ leaves of $t$ from left to right with $a_1,\dots, a_n$.

We will associate a noncrossing partition to any decorated Schr\"oder tree as follows.

\begin{Definition}\label{def:pit}
Let $t$ be a Schr\"oder tree with $n+1$ leaves. We label the sectors between the leaves from left to right from $1$ to $n$. We will associate a noncrossing partition $\pi(t) \in \NC(n)$ as follows: for each internal vertex~$v$ of~$t$, consider the corolla that has as root $v$ and is a subtree of $t$. For this corolla, define the block $V$ whose elements are the labels of the sectors that define the corolla.
\end{Definition}

\begin{figure}[h]\centering
\begin{minipage}[b]{.4\textwidth}
 \centering
 \begin{tikzpicture}[scale=1.5,font=\footnotesize,
 level 1/.style={level distance=5mm,sibling distance=9mm},
 level 2/.style={level distance=5mm,sibling distance=6mm},
 level 3/.style={level distance=6mm,sibling distance=5mm},
 level 4/.style={level distance=10mm,sibling distance=5mm},
]
\node(0)[solid node,label=above:{}]{}
child{node(1)[hollow node]{}
}
child{node(2)[solid node]{}
child{[black] node(11)[hollow node]{}}
child{[black] node(12)[hollow node]{}}
child{[black] node(13)[solid node]{}
	child{[black] node(131)[hollow node]{}}
	child{[black] node(132)[hollow node]{}}
}
edge from parent node[left]{}
}
child{node(3)[hollow node]{}
}
child{node(4)[solid node]{}
child{[black] node(41)[hollow node, ]{}}
child{[black] node(42)[solid node,]{}
	child{[black] node(421)[hollow node, ]{}}
	child{[black] node(422)[hollow node,]{}}
	child{[black] node(423)[hollow node, ]{}}
	child{[black] node(424)[hollow node,]{}}
}
edge from parent node[right]{}
};

\path (1) -- node {$a_1$} (2);
\path (2) -- node {$a_5$} (3);
\path (3) -- node {$a_6$} (4);

\path (11) -- node (H) {$a_{2}$} (12);
\path (12) -- node {$a_3$} (13);
\path (131) -- node {$a_4$} (132);
\path (41) -- node {$a_{7}$} (42);
\path (421) -- node {$a_8$} (422);
\path (422) -- node {$a_9$} (423);
\path (423) -- node {$a_{10}$} (424);

\end{tikzpicture}
\end{minipage}%
\begin{minipage}[b]{.4\textwidth}
 \centering
 \begin{tikzpicture}[thick,font=\small]
 \path 	(0,0) 		node (a) {1}
 	(0.5,0) 	node (b) {2}
 	(1,0) 		node (c) {3}
 	(1.5,0) 	node (d) {4}
 	(2,0) 		node (e) {5}
 	(2.5,0) 	node (f) {6}
 	(3,0) 		node (g) {7}
 	(3.5,0) 	node (h) {8}
 	(4,0) 		node (i) {9}
 	(4.5,0) 	node (j) {10};
 \draw (a) -- +(0,0.75) -| (f);
 \draw (e) -- +(0,0.75) -| (e);
 \draw (b) -- +(0,0.60) -| (c);
 \draw (d) -- +(0,0.60) -| (d);
 \draw (g) -- +(0,0.75) -| (g);
 \draw (h) -- +(0,0.75) -| (j);
 \draw (i) -- +(0,0.75) -| (i);
 \end{tikzpicture}
\end{minipage}
\caption{Decorated Schr\"oder tree $t \otimes a_1\cdots a_{10}$ (left). The associated noncrossing partition of $t$ is $\pi(t) = \{\{ 1,5,6\},\{2,3\},\{4\}, \{7\}, \{8,9,10\}\}$ (right). }\label{fig:DecSchTree}
\end{figure}
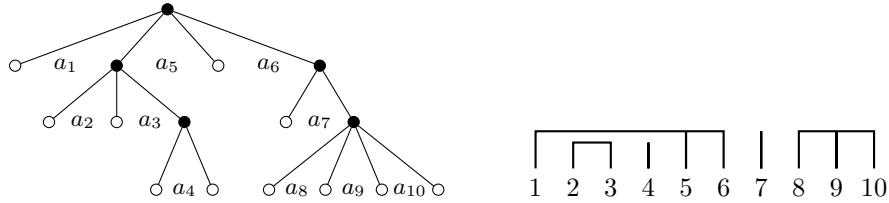

Hence \eqref{eq:coproductH} also defines a coproduct $\delta_\A$ in $\cHS(\A)$ with the additional rule of carrying the decorations of the subtrees obtained in $R_c(t)$ and $P_c(t)$:
\[
	\delta_{\A} (t\otimes w)= \sum_{c\in \operatorname{Adm}(t)} R_c(t\otimes w)\otimes P_c(t\otimes w).
\]

\begin{Example}
Considering the decoration $a_1a_2a_3\in \A^{\otimes 3}$ of the Schr\"oder tree in Example \ref{ex:schrodertree}, we have the following computation of the coproduct in $H_{\mathcal{S}}(\mathcal{A})$:
\begin{align*}
\delta_\A \bigg( \begin{tikzpicture}[baseline={([yshift=-1.4ex]current bounding box.center)},scale=0.6,
level 1/.style={level distance=8mm,sibling distance=24mm},
level 2/.style={level distance=8mm,sibling distance=12mm},
]
\node(0)[solid node,label=above:{}]{}
child{node(1)[solid node]{}
	child{[black] node(112)[hollow node]{}}
		child{[black] node(1112)[hollow node]{}}
	}
child{node(2)[solid node]{}
	child{[black] node(11)[hollow node]{}
	}
	child{[black] node(111)[hollow node]{}
	}
};
\path (112) -- node {$a_1$} (1112);
\path (1112) -- node {$a_2$} (11);
\path (11) -- node {$a_3$} (111);
\end{tikzpicture}
\bigg)={} &\begin{tikzpicture}[scale=0.6,
level 1/.style={level distance=7mm,sibling distance=7mm}]
\node(0)[hollow node,label=above:{}]{};\end{tikzpicture} \otimes \begin{tikzpicture}[baseline={([yshift=-1ex]current bounding box.center)},scale=0.6,
level 1/.style={level distance=8mm,sibling distance=24mm},
level 2/.style={level distance=8mm,sibling distance=12mm},
]
\node(0)[solid node,label=above:{}]{}
child{node(1)[solid node]{}
	child{[black] node(112)[hollow node]{}}
		child{[black] node(1112)[hollow node]{}}
	}
child{node(2)[solid node]{}
	child{[black] node(11)[hollow node]{}
	}
	child{[black] node(111)[hollow node]{}
	}
};
\path (112) -- node {$a_1$} (1112);
\path (1112) -- node {$a_2$} (11);
\path (11) -- node {$a_3$} (111);
\end{tikzpicture} + \begin{tikzpicture}[baseline={([yshift=-1ex]current bounding box.center)},scale=0.6,
level 1/.style={level distance=8mm,sibling distance=24mm},
level 2/.style={level distance=8mm,sibling distance=12mm},
]
\node(0)[solid node,label=above:{}]{}
child{node(1)[solid node]{}
	child{[black] node(112)[hollow node]{}}
		child{[black] node(1112)[hollow node]{}}
	}
child{node(2)[solid node]{}
	child{[black] node(11)[hollow node]{}
	}
	child{[black] node(111)[hollow node]{}
	}
};
\path (112) -- node {$a_1$} (1112);
\path (1112) -- node {$a_2$} (11);
\path (11) -- node {$a_3$} (111);
\end{tikzpicture} \otimes \begin{tikzpicture}[scale=0.6,
level 1/.style={level distance=7mm,sibling distance=7mm}]
\node(0)[hollow node,label=above:{}]{};\end{tikzpicture} + \begin{tikzpicture}[baseline={([yshift=-1ex]current bounding box.center)},scale=0.6,
level 1/.style={level distance=8mm,sibling distance=14mm},
]
\node(0)[solid node,label=above:{}]{}
child{node(1)[hollow node]{}
	}
child{node(2)[solid node]{}
child{[black] node(11)[hollow node]{}
	}
	child{[black] node(111)[hollow node]{}
	}
};
\path (1) -- node {$a_2$} (11);
\path (11) -- node {$a_3$} (111);
\end{tikzpicture}\otimes \begin{tikzpicture}[baseline={([yshift=-1ex]current bounding box.center)},scale=0.6,
level 1/.style={level distance=8mm,sibling distance=12mm},
]
\node(0)[solid node,label=above:{}]{}
child{node(1)[hollow node]{}
	}
child{node(2)[hollow node]{}
};
\path (1) -- node {$a_1$} (2);
\end{tikzpicture} \\ &{} + \begin{tikzpicture}[baseline={([yshift=-1ex]current bounding box.center)},scale=0.6,
level 1/.style={level distance=8mm,sibling distance=14mm},
]
\node(0)[solid node,label=above:{}]{}
child{node(1)[solid node]{}
	child{[black] node(11)[hollow node]{}
	}
	child{[black] node(111)[hollow node]{}
	}
	}
child{node(2)[hollow node]{}
};
\path (11) -- node {$a_1$} (111);
\path (111) -- node {$a_2$} (2);
\end{tikzpicture}\otimes \begin{tikzpicture}[baseline={([yshift=-1ex]current bounding box.center)},scale=0.6,
level 1/.style={level distance=8mm,sibling distance=12mm},
]
\node(0)[solid node,label=above:{}]{}
child{node(1)[hollow node]{}
	}
child{node(2)[hollow node]{}
};
\path (1) -- node {$a_3$} (2);
\end{tikzpicture} + \begin{tikzpicture}[baseline={([yshift=-1ex]current bounding box.center)},scale=0.6,
level 1/.style={level distance=8mm,sibling distance=12mm},
]
\node(0)[solid node,label=above:{}]{}
child{node(1)[hollow node]{}
	}
child{node(2)[hollow node]{}
};
\path (1) -- node {$a_2$} (2);
\end{tikzpicture} \otimes \begin{tikzpicture}[baseline={([yshift=-1ex]current bounding box.center)},scale=0.6,
level 1/.style={level distance=8mm,sibling distance=12mm},
]
\node(0)[solid node,label=above:{}]{}
child{node(1)[hollow node]{}
	}
child{node(2)[hollow node]{}
};
\path (1) -- node {$a_1$} (2);
\end{tikzpicture} \;\begin{tikzpicture}[baseline={([yshift=-1ex]current bounding box.center)},scale=0.6,
level 1/.style={level distance=8mm,sibling distance=12mm},
]
\node(0)[solid node,label=above:{}]{}
child{node(1)[hollow node]{}
	}
child{node(2)[hollow node]{}
};
\path (1) -- node {$a_3$} (2);
\end{tikzpicture} .
\end{align*}
\end{Example}

We also have a grading inherited by $\cHS$ and the counit $\varepsilon_\A\colon \cHS(\A)\to\mathbb{C}$ is provided by the algebra morphism such that
\[
	\varepsilon_\A(t\otimes w) =
		\begin{cases} 1
		& \text{if $t=\circ$ and $w=\uno$,}\\ 0 &\text{otherwise.}
		\end{cases}
\]
Since $\operatorname{\deg}(t) = n$ if $t\in \ST(n)$, i.e., if $t$ has $n+1$ leaves, we obtain again that $t\otimes w$ has degree 0 if and only if $t$ has one leaf and $w$ has length~0, and this happens if only if $t$ is a single-vertex tree decorated with the empty word. Thus, $0$-th degree component of $H_\mathcal{S}(\A) \cong \mathbb{C}$ and hence, $H_\mathcal{S}(\A)$ is connected. Therefore we obtain:

\begin{Theorem}[\cite{JMNT}]
$(\cHS(\A),\delta_{\A},\varepsilon_\A)$ is a connected graded Hopf algebra.
\end{Theorem}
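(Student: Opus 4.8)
The plan is to reduce everything to the already-established fact that the undecorated algebra $(\cHS,\delta,\varepsilon)$ is a connected graded Hopf algebra. Since the preceding paragraph already shows that $\cHS(\A)$ is connected and graded, and since every connected graded bialgebra automatically admits a (unique) antipode, built by the standard degree-by-degree recursion, it suffices to verify that $(\cHS(\A),\delta_\A,\varepsilon_\A)$ is a graded bialgebra. The product and unit are inherited directly from those of $\cHS$ and $T(\A)$ and are manifestly associative and unital, so the real content lies in the coalgebra axioms for $\delta_\A$ and $\varepsilon_\A$ and their compatibility with the product.

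The key observation driving the whole argument is that admissible cuts do not see the decorations: by Definition~\ref{def:admcut} an admissible cut of $t\otimes w$ is precisely an admissible cut of the underlying tree $t$, so the sum defining $\delta_\A$ is indexed by the same set $\operatorname{Adm}(t)$ as the undecorated coproduct \eqref{eq:coproductH}. For each cut $c$ the decoration word $w=a_1\cdots a_n$ is split deterministically: every sector is internal to exactly one pruned subtree or else survives in the trunk, so the subtrees of $P_c(t)$ carry the letters of the sectors they contain while $R_c(t)$ receives all remaining letters (this is exactly what the worked example illustrates, the letter count being preserved in each term). Thus $\delta_\A$ is literally ``$\delta$ with the decorations carried along,'' and it is well defined because this letter-assignment is canonical once $c$ is fixed.

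First I would check that $\delta_\A$ and $\varepsilon_\A$ are algebra morphisms: both are defined as such on generators, and because admissible cuts of a Schr\"oder forest factor as products of admissible cuts of its trees, this is consistent with the bar/tensor product, giving the bialgebra compatibility. Next I would verify coassociativity by expanding both $(\delta_\A\otimes\id)\circ\delta_\A$ and $(\id\otimes\delta_\A)\circ\delta_\A$: each is a sum over compatible nested pairs of admissible cuts of $t$, matching the undecorated computation term by term, with the letters of $w$ attached at each level. Since $\delta$ is coassociative by the Proposition establishing that $(\cHS,\delta,\varepsilon)$ is a Hopf algebra, the decorated identity follows. The counit axiom holds because applying $\varepsilon_\A$ to the left (respectively right) tensor factor of $\delta_\A(t\otimes w)$ annihilates every term except the one coming from the cut at the root (respectively the empty cut), each of which returns $t\otimes w$, exactly as for $\cHS$. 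Finally, every cut preserves the total number of sectors, which equals both $\deg(t)$ and the length of $w$, so $\delta_\A$ respects the grading $\cHS(n)\otimes\A^{\otimes n}$.

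The main obstacle I anticipate is purely one of bookkeeping: showing that the deterministic decoration-splitting is consistent under iterated cuts, i.e.\ that assigning letters commutes with performing nested cuts, so that the two sides of coassociativity carry identical labels. One must track carefully which letters flow to the trunk, including those attached to the new sectors created where pruned subtrees are replaced by leaves, versus those flowing to each pruned subtree, and confirm that cutting ``trunk-first'' and ``pruning-first'' produce the same distribution. This indexing is routine but delicate, and it is the only place where the decorated case demands genuine care beyond invoking the undecorated result.
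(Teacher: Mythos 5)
Your proposal is correct, but note that the paper does not actually prove this statement: it is quoted from \cite{JMNT}, and the only ingredient the authors supply themselves is the short paragraph preceding the theorem, which verifies that the $0$-th graded component of $\cHS(\A)$ is $\mathbb{C}$ (connectedness) and that the grading is inherited from $\cHS$ and $T(\A)$. Your argument is therefore a reconstruction of the omitted proof rather than a variant of one given in the paper, and as a reconstruction it is sound and follows the expected route: the bialgebra axioms reduce to the undecorated case $(\cHS,\delta,\varepsilon)$ because an admissible cut $c\in\operatorname{Adm}(t)$ is blind to decorations and splits the word $w=a_1\cdots a_n$ deterministically (each of the $n$ sectors either lies between two consecutive leaves of a pruned subtree, hence goes to $P_c$, or survives as a sector of the trunk, hence goes to $R_c$, so the letter count $\deg R_c(t)+\deg P_c(t)=\deg t$ is preserved); multiplicativity holds because cuts of a forest factor over its trees compatibly with the noncommutative product; the counit axiom is carried by the two trivial cuts $c=\{\mathrm{root}\}$ and $c=\varnothing$; and the antipode comes for free from connectedness and grading. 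You correctly identify the one point requiring genuine care, namely that the decoration-splitting is compatible with iterated cuts so that coassociativity transfers from $\delta$ to $\delta_\A$; this is indeed only bookkeeping, since a sector's destination under a nested pair of cuts depends only on the finest subtree containing both of its bounding leaves, which is the same for the trunk-first and pruning-first orders. The only thing your write-up buys beyond the paper is self-containedness; the paper buys brevity by deferring to \cite{JMNT}.
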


An important feature of the previous Hopf algebra is that its coproduct can be split into two half-coproducts in order to endow $\cHS(\A)$ with an unshuffle bialgebra structure. This splitting is described in the following way: Let $\cHS^+(\A)$ be the augmentation ideal of $\cHS(\A)$ and $t\otimes w \in \cHS^+$ a decorated Schr\"oder tree. We consider the subset of admissible cuts of a Schr\"oder tree given by
\[
	\operatorname{Adm}_\prec(t) := \{c\in \operatorname{Adm}(t)\colon \text{$R_c(t)$ contains the leftmost leaf of $t$}\}.
\]
We define the maps
\[
	\delta^+_{\prec,\A} (t\otimes w)= \sum_{c\in \operatorname{Adm}_\prec(t)} R_c(t\otimes w)\otimes P_c(t\otimes w),
\]
and
\[
	\delta_{\succ,\A}^+(t\otimes w) = \delta_\A(t\otimes w) - \delta_{\prec,\A}^+(t\otimes w).
 \]
These maps extend to $\delta_{\prec,\A},\delta_{\succ,\A} \colon \cHS^+(\A) \to \cHS(\A)\otimes \cHS(\A)$ by
\begin{align*}
	\delta_{\prec,\A}\big((t_1\otimes w_1)\cdots (t_m\otimes w_m)\big)
	&= \delta_{\prec,\A}(t_1\otimes w_1)\delta_\A\big((t_2\otimes w_2)\cdots (t_m\otimes w_m)\big),\\
	\delta_{\succ,\A}\big((t_1\otimes w_1)\cdots (t_m\otimes w_m)\big)
	&= \delta_{\succ,\A}(t_1\otimes w_1)\delta_\A\big((t_2\otimes w_2)\cdots (t_m\otimes w_m)\big).
\end{align*}
The precise statement of the new structure on $\cHS(\A)$ is as follows.

\begin{Theorem}[\cite{JMNT}]
$(\cHS(\A),\delta_{\prec,\A},\delta_{\succ,\A})$ is an unshuffle bialgebra.
\end{Theorem}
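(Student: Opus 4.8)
The plan is to check the two defining features of a unital unshuffle bialgebra: that the half-coproducts reproduce $\delta_\A$ and are compatible with the concatenation product in the required one-sided multiplicative way, and that the triple of codendriform coassociativity relations holds. The first feature is essentially built in. By construction $\operatorname{Adm}(t)=\operatorname{Adm}_\prec(t)\sqcup\operatorname{Adm}_\succ(t)$, where $c\in\operatorname{Adm}_\prec(t)$ exactly when no element of $c$ lies on the path from the root to the leftmost leaf of $t$, so that this leaf survives in $R_c(t)$; hence $\delta_\A=\delta_{\prec,\A}+\delta_{\succ,\A}$ on $\cHS^+(\A)$. The empty cut lands in $\operatorname{Adm}_\prec$ (contributing $t\otimes\uno$) and the root-only cut lands in $\operatorname{Adm}_\succ$ (contributing $\uno\otimes t$). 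Compatibility of $\delta_{\prec,\A},\delta_{\succ,\A}$ with the product then follows from the stated extension rules, splitting only the first tree of a forest while applying the full $\delta_\A$ to the remaining factors, together with the already-established multiplicativity of $\delta_\A$; checking that these rules coincide with the unshuffle compatibility conditions is a direct bookkeeping of how an admissible cut of a forest restricts to cuts of its constituent trees.

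The heart of the proof is the triple of codendriform relations, which in the present notation (now with $\delta_{\prec,\A},\delta_{\succ,\A}$ the reduced half-coproducts on $\cHS^+(\A)$) read $(\delta_{\prec,\A}\otimes\id)\circ\delta_{\prec,\A}=(\id\otimes(\delta_{\prec,\A}+\delta_{\succ,\A}))\circ\delta_{\prec,\A}$, then $(\delta_{\succ,\A}\otimes\id)\circ\delta_{\prec,\A}=(\id\otimes\delta_{\prec,\A})\circ\delta_{\succ,\A}$, and finally $((\delta_{\prec,\A}+\delta_{\succ,\A})\otimes\id)\circ\delta_{\succ,\A}=(\id\otimes\delta_{\succ,\A})\circ\delta_{\succ,\A}$. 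I would prove all three at once from coassociativity of $\delta_\A$, which I am free to assume since $(\cHS(\A),\delta_\A,\varepsilon_\A)$ is already a Hopf algebra. Coassociativity presents the iterated coproduct $\delta_\A^{(2)}$ as a single sum over three-layer decompositions of a decorated Schr\"oder tree $t$ into a bottom trunk, a middle forest, and a top forest, cut out by two nested admissible cuts; the sector decorations are transported layer by layer and play no role in the combinatorics. The key observation is that the leftmost leaf of $t$ lies in exactly one of these three layers, and that this intrinsic choice of layer refines $\delta_\A^{(2)}$ into three disjoint pieces.

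I would then match each codendriform relation to one of these pieces. Using that $c\in\operatorname{Adm}_\prec(t)$ keeps the leftmost leaf of $t$ as the still-leftmost leaf of $R_c(t)$, while $c\in\operatorname{Adm}_\succ(t)$ places it as the leftmost leaf of the first tree of the forest $P_c(t)$, one checks that on each side the $\prec/\succ$ choices at the two successive cuts force the leftmost leaf into a prescribed layer: into the bottom for the first relation, into the middle for the second, and into the top for the third. Because ``which layer contains the leftmost leaf'' is a property of the output three-layer decomposition and not of the order in which the two cuts are performed, both sides of each relation compute the same restriction of $\delta_\A^{(2)}$, and the relations follow.

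I expect the main obstacle to be the careful tracking of the leftmost leaf through iterated cuts, and in particular the verification for the mixed second relation, where one must confirm that the leftmost leaf of $t$ reappears as the leftmost leaf of the leftmost component of the intermediate pruning on both sides, so that the $\operatorname{Adm}_\prec$/$\operatorname{Adm}_\succ$ labels at the two stages really match up under the coassociativity bijection. A secondary point requiring care is the treatment of the degenerate layers (empty prunings and the trunk $\circ$), which must be reconciled with the unital conventions via the grouplike behaviour of $\uno$; once these boundary cases are handled, the argument extends verbatim from single trees to forests, since both the leftmost leaf of a forest and its admissible-cut splitting are determined by its first tree.
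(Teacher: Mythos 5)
The paper does not actually prove this statement: it is imported verbatim from Josuat-Verg\`es--Menous--Novelli--Thibon \cite{JMNT} (with only a remark that the leftmost-leaf convention used here differs from the rightmost-leaf convention of \cite{JMNT} by reversal of words), so there is no in-paper argument to compare yours against. On its own terms your outline is correct and complete in structure. The characterization of $\operatorname{Adm}_\prec(t)$ as the cuts avoiding the root-to-leftmost-leaf path is the right reformulation, the three codendriform relations you write down are the correct duals of the shuffle identities, and the central idea --- refining the coassociativity identity for $\delta_\A^{[3]}$ according to which of the three layers of a doubly-cut tree contains the leftmost leaf of $t$, then checking that each side of each relation is exactly the restriction of one parametrization of $\delta_\A^{[3]}$ to a fixed layer --- does work; I verified that the bottom, middle and top layers correspond to the first, second and third relations respectively in both parametrizations. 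The two points you flag as delicate are indeed the ones that carry the proof: that for $c\in\operatorname{Adm}_\succ(t)$ the original leftmost leaf of $t$ reappears as the leftmost leaf of the \emph{first} tree of the forest $P_c(t)$ (which is what makes the forest extension rule $\delta_{\prec,\A}(f_1f_2\cdots f_m)=\delta_{\prec,\A}(f_1)\delta_\A(f_2\cdots f_m)$ mesh with the layer bookkeeping in the second and third relations), and the unital conventions for the empty cut and the root cut. Both check out, so this would serve as a legitimate self-contained proof of the cited theorem rather than a reconstruction of anything in the present paper.
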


\begin{Remark}
In \cite{JMNT}, the authors give the definition of $\operatorname{Adm}_\prec(t)$ considering the rightmost leaf of $t$ instead of the leftmost one. Both structures are clearly isomorphic by considering the reversal of words $a_1\cdots a_n \in \A^{\otimes n}\mapsto a_n\cdots a_1 \in \A^{\otimes n}$.
\end{Remark}

Let $\A$ be an algebra. With the above theorem, we now have two unshuffle bialgebras $T(T_+(\A))$ and $\cHS(\A)$. The relation between both unshuffle bialgebras has been described in~\cite{JMNT} in terms of an algebra morphism which respects both coalgebra and unshuffle bialgebra structures. More precisely:

\begin{Theorem}[\cite{JMNT}]
Let $\iota\colon T(T_+(\A))\to \cHS(\A)$ be the algebra morphism defined by
\begin{equation}\label{eq:iota}
	\iota(w) = \sum_{t\in \ST(n)} t\otimes w,
\end{equation}
for any word $w= a_1\cdots a_n \in T_+(\A)$. Then $\iota$ is a coalgebra morphism and an unshuffle bialgebra morphism.
\end{Theorem}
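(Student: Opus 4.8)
The plan is to exploit that every map in sight is an algebra morphism, which reduces all the required identities to the algebra generators of $T(T_+(\A))$, namely the single words $w = a_1\cdots a_n \in T_+(\A)$. Indeed $\iota$, $\delta_\A$, $\Delta$ and $\iota\otimes\iota$ are all algebra morphisms, so $\delta_\A\circ\iota$ and $(\iota\otimes\iota)\circ\Delta$ are both algebra morphisms from $T(T_+(\A))$ into $\cHS(\A)\otimes\cHS(\A)$; hence they coincide as soon as they agree on each word $w$. The counit compatibility $\varepsilon_\A\circ\iota = \epsilon$ is immediate from the grading: for $n\geq1$ every summand $t\otimes w$ of $\iota(w)$ has positive degree and is annihilated by $\varepsilon_\A$, while $\iota(\uno) = \circ = 1$.

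First I would prove the full coproduct identity on a word $w = a_1\cdots a_n$. Expanding the left-hand side through \eqref{eq:iota} and \eqref{eq:coproductH} yields the double sum $\sum_{t\in\ST(n)}\sum_{c\in\operatorname{Adm}(t)} R_c(t\otimes w)\otimes P_c(t\otimes w)$, while the right-hand side, using \eqref{eq:copEFP} and the multiplicativity of $\iota$, becomes $\sum_{S\subseteq[n]}\iota(a_S)\otimes\iota(a_{J_{[n]}^S})$. The core of the argument is a decoration-preserving bijection
$$\big\{(t,c)\colon t\in\ST(n),\ c\in\operatorname{Adm}(t)\big\}\ \longleftrightarrow\ \Big\{(S,r,(u_i)_i)\colon S\subseteq[n],\ r\in\ST(|S|),\ u_i\in\ST(|J_i|)\Big\},$$
where $J_1,\dots,J_p$ are the connected components of $[n]\setminus S$. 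In the forward direction, given $(t,c)$ I set $r = R_c(t)$, let $(u_i)_i = P_c(t)$ be the pruned subtrees read from left to right, and read off $S$ as the labels of the sectors of $t$ surviving in the trunk. In the backward direction, given $(S,r,(u_i)_i)$ I graft each $u_i$ onto the leaf of $r$ lying in the gap of $S$ occupied by $J_i$, and take $c$ to be the set of grafted roots.

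The step I expect to be the main obstacle is verifying that this correspondence is genuinely a bijection and respects decorations. The crux is the claim that the sectors internal to a single pruned subtree form exactly one connected component $J_i$ of $[n]\setminus S$, with the left-to-right order of the subtrees matching the natural order of the $J_i$. This uses that in a planar Schr\"oder tree the leaves below any internal vertex form a contiguous block, so the internal sectors of each pruned subtree constitute a contiguous block of labels, and that two distinct pruned subtrees are separated by at least one surviving sector (the sector at the corolla of their lowest common ancestor, which lies strictly above both cut vertices and hence in the trunk). Granting this, $R_c(t\otimes w)$ carries the decoration $a_S$ and $P_c(t\otimes w)$ carries the decorations $a_{J_1},\dots,a_{J_p}$, so the two expansions agree term by term; the grafting map is an honest inverse because each grafted subtree has at least two leaves, so its root is recovered unambiguously as a cut vertex. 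This establishes that $\iota$ is a coalgebra morphism.

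Finally, for the unshuffle structure I would refine the same bijection. By definition $\operatorname{Adm}_\prec(t)$ consists of the cuts whose trunk retains the leftmost leaf of $t$, and a short argument shows that the leftmost leaf survives in $R_c(t)$ if and only if the first sector survives, i.e.\ $1\in S$ (the leftmost leaf lies in a pruned subtree precisely when the first sector does). Hence the $\prec$-restricted bijection identifies $\delta_{\prec,\A}(\iota(w))$ with $(\iota\otimes\iota)\big(\sum_{1\in S}a_S\otimes a_{J_{[n]}^S}\big) = (\iota\otimes\iota)(\Delta_\prec(w))$ on single words. Since $\iota$ is an algebra morphism and both half-coproducts are extended to products by the same rule, $\delta_{\prec,\A}$ and $\Delta_\prec$ acting only on the first factor and leaving the remaining factors to the full coproduct, the single-word identity together with the already established full-coproduct identity propagates to all of $T(T_+(\A))$. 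The identity for $\succ$ then follows by subtraction from $\Delta = \Delta_\prec+\Delta_\succ$ and $\delta_\A = \delta_{\prec,\A}+\delta_{\succ,\A}$, completing the proof that $\iota$ is an unshuffle bialgebra morphism.
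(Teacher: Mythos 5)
The paper does not prove this statement --- it is imported verbatim from \cite{JMNT} --- so there is no internal proof to compare against; judged on its own, your argument is correct and complete. The reduction to single words via multiplicativity, the bijection between pairs $(t,c)$ and triples $(S,r,(u_i)_i)$, and the identification of the $\prec$-part of the cut set with the condition $1\in S$ are exactly the right ingredients, and you correctly isolate the only nontrivial verification: that the sectors interior to a pruned subtree form a contiguous block equal to one component $J_i$ of $[n]\setminus S$, and that consecutive pruned subtrees are separated by a surviving sector. One small imprecision: the separating sector between two adjacent pruned subtrees rooted at $v$ and $v'$ is attached to the lowest common ancestor of the last leaf under $v$ and the first leaf after it, which is a proper ancestor of $v$ but need not be the lowest common ancestor of $v$ and $v'$; the conclusion you need (that this vertex survives in the trunk, by admissibility of $c$) holds either way, so this does not affect the proof. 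Your treatment of the extension of the half-coproducts to products, using that $\Delta_\prec$ and $\delta_{\prec,\A}$ both act by the half-coproduct on the first factor and the full coproduct on the rest, correctly handles the part of the unshuffle compatibility that does not follow from multiplicativity alone.
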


\section{Monotone cumulants in terms of moments via Schr\"oder trees}\label{sec:monotone}

After having introduced the appropriate tools for our approach, we are prepared to attack the problem of finding a formula to express monotone cumulants in terms of moments.

 We begin with a noncommutative probability space $(\A,\varphi)$. By taking the algebra $\A$, we can consider both Hopf algebras $T(T_+(\A))$ and $\cHS(\A)$ described in Sections \ref{sec:shuffle} and \ref{sec:schroder}. We extend the linear functional $\varphi$ to a character $\Phi\colon T(T_+(\A))\to\mathbb{C}$ as it is done in Theorem \ref{thm:linkNCP}. Observe that we also have the notions of characters and infinitesimal characters on $\cHS(\A)$ defined in a~way analogous to the case of the Hopf algebra $T(T_+(\A))$. Hence we consider the following lifting of~$\varphi$ to a character $\tilde\Phi\colon \cHS(\A)\to\mathbb{C}$ by defining $\tilde\Phi(\circ) =1$ and
\begin{equation}
\label{eq:tildephi}
	\tilde\Phi(t \otimes a_1\cdots a_n)
	= \begin{cases} \varphi(a_1\cdots a_n) & \text{if $t$ is a corolla with $n+1$ leaves},\\ 0& \text{otherwise}. \end{cases}
\end{equation}
By using the coalgebra morphism $\iota$ defined in \eqref{eq:iota}, it is straightforward to see that $\Phi = \tilde\Phi \circ \iota.$ In addition, let $\tilde \rho\colon \cHS(\A)\to\mathbb{C}$ be the infinitesimal character such that $\tilde\Phi = \exp^*(\tilde\rho)$, where in this case $*$ stands for the convolution product defined in $\operatorname{Lin}(\cHS(\A),\mathbb{C})$. As stated in the following result, we can relate $\tilde\rho$ with the monotone cumulants on $\A$.

\begin{Proposition}\label{prop:easy}
Let $\rho\colon T(T_+(\A))\to\mathbb{C}$ be defined by $\rho = \tilde\rho\circ\iota$. Then $\rho$ is a infinitesimal character such that $\Phi = \exp^*(\rho).$
\end{Proposition}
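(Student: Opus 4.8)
The plan is to exploit that $\iota$ is simultaneously an algebra morphism and a coalgebra morphism, so that precomposition with $\iota$ transports the entire convolution picture from $\cHS(\A)$ down to $T(T_+(\A))$. Concretely, there are two things to establish: that $\rho$ is an infinitesimal character, and that $\exp^*(\rho) = \Phi$. Both are formal consequences of the bialgebra-morphism property of $\iota$, so I do not expect a genuine obstacle; the only point requiring a little care is the interchange of the $*$-exponential with precomposition.

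First I would verify that $\rho = \tilde\rho\circ\iota$ is an infinitesimal character. It is equivalent to check the standard derivation characterization, namely that $\alpha(xy) = \alpha(x)\varepsilon(y) + \varepsilon(x)\alpha(y)$ (this is exactly the condition that $\alpha$ vanish on $\uno$ and on all bar products $w_1|\cdots|w_k$ with $k\geq2$). Using that $\iota$ is an algebra morphism and a coalgebra morphism, so in particular $\varepsilon_\A\circ\iota = \epsilon$, together with the infinitesimal-character property of $\tilde\rho$ on $\cHS(\A)$, I would compute
\[
\rho(xy) = \tilde\rho(\iota(x)\iota(y)) = \tilde\rho(\iota(x))\varepsilon_\A(\iota(y)) + \varepsilon_\A(\iota(x))\tilde\rho(\iota(y)) = \rho(x)\epsilon(y) + \epsilon(x)\rho(y),
\]
for all $x,y\in T(T_+(\A))$, which is precisely the required condition.

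The key step is to show that precomposition with $\iota$ is a morphism of convolution algebras, that is $(f*g)\circ\iota = (f\circ\iota)*(g\circ\iota)$ for all $f,g\in\operatorname{Lin}(\cHS(\A),\mathbb{C})$. This is where the coalgebra-morphism property is used: since $\delta_\A\circ\iota = (\iota\otimes\iota)\circ\Delta$, I would write
\[
(f*g)\circ\iota = m_{\mathbb{C}}\circ(f\otimes g)\circ\delta_\A\circ\iota = m_{\mathbb{C}}\circ\big((f\circ\iota)\otimes(g\circ\iota)\big)\circ\Delta = (f\circ\iota)*(g\circ\iota).
\]
Because $\iota$ respects counits, this precomposition map also sends the unit $\varepsilon_\A$ to $\epsilon$, so it is a unital morphism of convolution algebras.

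Finally I would combine the two observations. A unital convolution-algebra morphism commutes with $\exp^*$ term by term, and by connectedness and the grading each $\exp^*$ is a finite sum in every fixed degree, so $\tilde\rho^{*n}\circ\iota = (\tilde\rho\circ\iota)^{*n} = \rho^{*n}$ for every $n\geq0$ and hence $\exp^*(\tilde\rho)\circ\iota = \exp^*(\rho)$. Together with the relations $\Phi = \tilde\Phi\circ\iota$ and $\tilde\Phi = \exp^*(\tilde\rho)$ already established before the statement, this gives $\Phi = \exp^*(\tilde\rho)\circ\iota = \exp^*(\rho)$, completing the proof.
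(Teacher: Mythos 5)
Your proof is correct and follows essentially the same route as the paper: the paper likewise checks the infinitesimal-character property from the fact that $\iota$ is an algebra morphism, and uses the coalgebra-morphism property of $\iota$ to pull $\exp^*(\tilde\rho)$ back to $\exp^*(\rho)$. You merely spell out in more detail the step the paper compresses into ``$\exp^*(\tilde\rho)(\iota(w))=\exp^*(\tilde\rho\circ\iota)(w)$'', namely that precomposition with $\iota$ is a unital morphism of convolution algebras.
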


\begin{proof}Since $\iota$ is an algebra morphism and $\tilde\rho$ is an infinitesimal character over $\cHS(\A)$, for any words $w_1,w_2\in T(T_+(\A))$ we have
\[
	\rho(w_1|w_2) = \tilde\rho(\iota(w_1|w_2)) = \tilde\rho(\iota(w_1)\iota(w_2)) = 0.
\]
Furthermore, since $\iota$ is also a coalgebra morphism, we obtain that for any $w\in T_+(\A)$
\[
	\Phi(w) = \big(\tilde\Phi \circ\iota\big)(w)
	= \tilde\Phi(\iota(w))
	= \exp^*(\tilde \rho)(\iota(w)) = \exp^*(\tilde\rho\circ\iota)(w)
	= \exp^*(\rho)(w).
\]
Note that we used that $\iota$ is a coalgebra morphism in the fourth equality.
\end{proof}

By the above proposition and Theorem \ref{thm:linkNCP}, we conclude that if $w=a_1\cdots a_n$, then
\begin{equation}
\label{eq:rhoiota}
\rho(w) = \sum_{t\in \ST(n)} \tilde{\rho}(t\otimes w)
\end{equation}
is the infinitesimal character associated to the monotone cumulants of $a_1,\dots,a_n\in \A$. On the other hand, note that
\[
	\tilde\Phi
	= \exp^*(\tilde\rho) \ \Leftrightarrow \ \tilde\rho
	= \log^*\big(\tilde\Phi\big)
	= \log^*\big(\varepsilon + \big(\tilde\Phi - \varepsilon\big)\big)
	= \sum_{k=1}^\infty \frac{(-1)^{k+1}}{k} \hat\Phi^{* k},
\]
where $\hat\Phi = \tilde\Phi - \varepsilon$. In order to find an expression for the $k$-fold convolution product $\hat\Phi^{*k}$, we will need to describe the so-called Murua coefficients.

\subsection{Murua coefficients}
\label{ssec:Murua}

Recall that given a forest of planar rooted trees, we have a poset structure canonically associated to it. The following coefficients associated to rooted trees have appeared in the analysis of the continuous Baker--Campbell--Hausdorff problem in a Hall basis (see Murua's article~\cite{Murua}).

\begin{Definition}
Let $t$ be a planar rooted tree seen as a poset with $n = |t|$ vertices. For any integer $0 < k< n+1$, we denote by $\omega_k(t)$ to the number of surjective functions $f\colon t\to \{1,\dots,k\}$ such that for any two vertices $v$, $w$ in $t$ with $v<w$, we have that $f(v)<f(w)$. We then define the quantity $\omega(t)$ by
\begin{equation}\label{eq:muruacoeff}
\omega(t) = \sum_{k=1}^n \frac{(-1)^{k+1}}{k} \omega_k(t).
\end{equation}
If $f$ is a forest with trees $t_1,\dots, t_m$, we define $\omega(f) = \omega(t_1)\cdots \omega(t_m).$
\end{Definition}

In the following tables, we write down the values of the $\omega$-map for several rooted trees.
\begin{center}
\begin{tabular}{ |c|c|c|c|c|c|c|c|c|c| }
 \hline
 $t$ & \begin{tikzpicture}[scale=0.5,
level 1/.style={level distance=7mm,sibling distance=7mm}]
\node(0)[solid node,label=above:{}]{};
\end{tikzpicture}
 & \begin{tikzpicture}[scale=0.5,
level 1/.style={level distance=7mm,sibling distance=7mm}]
\node(0)[solid node,label=above:{}]{}
child{node(1)[solid node]{}};
\end{tikzpicture} & \begin{tikzpicture}[scale=0.5,
level 1/.style={level distance=7mm,sibling distance=7mm}]
\node(0)[solid node,label=above:{}]{}
child{node(1)[solid node]{}
child{[black] node(11)[solid node]{}}
};
\end{tikzpicture} & \begin{tikzpicture}[scale=0.5,
level 1/.style={level distance=7mm,sibling distance=7mm}]
\node(0)[solid node,label=above:{}]{}
child{node(1)[solid node]{}
}
child{node(2)[solid node]{}
};
\end{tikzpicture}& \begin{tikzpicture}[scale=0.5,
level 1/.style={level distance=7mm,sibling distance=7mm}]
\node(0)[solid node,label=above:{}]{}
child{node(1)[solid node]{}
child{[black] node(11)[solid node]{}
child{[black] node(111)[solid node]{}}
}
};
\end{tikzpicture}& \begin{tikzpicture}[scale=0.5,
level 1/.style={level distance=7mm,sibling distance=7mm}]
\node(0)[solid node,label=above:{}]{}
child{node(1)[solid node]{}
child{[black] node(11)[solid node]{}}
child{[black] node(111)[solid node]{}}
};
\end{tikzpicture}& \begin{tikzpicture}[scale=0.5,
level 1/.style={level distance=7mm,sibling distance=7mm}]
\node(0)[solid node,label=above:{}]{}
child{node(1)[solid node]{}
child{[black] node(11)[solid node]{}}
}
child{node(2)[solid node]{}
};
\end{tikzpicture}
& \begin{tikzpicture}[scale=0.5,
level 1/.style={level distance=7mm,sibling distance=7mm}]
\node(0)[solid node,label=above:{}]{}
child{node(1)[solid node]{}
}
child{node(2)[solid node]{}
}
child{[black] node(11)[solid node]{}
};
\end{tikzpicture}& \begin{tikzpicture}[scale=0.5,
level 1/.style={level distance=7mm,sibling distance=7mm}]
\node(0)[solid node,label=above:{}]{}
child{node(1)[solid node]{}
}
child{node(2)[solid node]{}
}
child{[black] node(11)[solid node]{}
}
child{[black] node(111)[solid node]{}
};
\end{tikzpicture}
\\ \hline
 $\quad{\omega(t)^{\phantom{I}}}^{\phantom{I}}$ & 1 & $-\frac{1}{2}$& $\frac{1}{3}$& $\frac{1}{6}$ & $-\frac{1}{4}$ & $ -\frac{1}{6}$ & $-\frac{1}{12}$ & $0$ & $-\frac{1}{30}$ \\[0.1cm]
 \hline
\end{tabular}
\end{center}

\begin{center}
\begin{tabular}{|c|c|c|c|c|c|c|c|c|}
\hline
{ $t$ }& \begin{tikzpicture}[scale=0.5,
level 1/.style={level distance=7mm,sibling distance=7mm}]
\node(0)[solid node,label=above:{}]{}
child{node(1)[solid node]{}
child{[black] node(11)[solid node]{}
child{[black] node(111)[solid node]{}
child{[black] node(1111)[solid node]{}}
}
}
};
\end{tikzpicture}& \begin{tikzpicture}[scale=0.5,
level 1/.style={level distance=7mm,sibling distance=7mm}]
\node(0)[solid node,label=above:{}]{}
child{node(1)[solid node]{}
child{[black] node(11)[solid node]{}
child{[black] node(111)[solid node]{}
}
child{[black] node(1111)[solid node]{}}
}
};
\end{tikzpicture}&
 \begin{tikzpicture}[scale=0.5,
level 1/.style={level distance=7mm,sibling distance=7mm}]
\node(0)[solid node,label=above:{}]{}
child{node(1)[solid node]{}
child{[black] node(111)[solid node]{}}
child{[black] node(1111)[solid node]{}}
child{[black] node(11)[solid node]{}}
};
\end{tikzpicture}
 &
 \begin{tikzpicture}[scale=0.5,
level 1/.style={level distance=7mm,sibling distance=7mm}]
\node(0)[solid node,label=above:{}]{}
child{node(1)[solid node]{}
child{[black] node(111)[solid node]{}child{[black] node(11)[solid node]{}}}
child{[black] node(1111)[solid node]{}}
};
\end{tikzpicture}
 &
 \begin{tikzpicture}[scale=0.5,
level 1/.style={level distance=7mm,sibling distance=7mm}]
\node(0)[solid node,label=above:{}]{}
child{node(1)[solid node]{}
child{[black] node(11)[solid node]{}
child{[black] node(111)[solid node]{}
}
}
}
child{[black] node(1111)[solid node]{}};
\end{tikzpicture} &
\begin{tikzpicture}[scale=0.5,
level 1/.style={level distance=7mm,sibling distance=7mm}]
\node(0)[solid node,label=above:{}]{}
child{node(1)[solid node]{}
child{[black] node(11)[solid node]{}
}
}
child{[black] node(1111)[solid node]{}}
child{[black] node(111)[solid node]{}};
\end{tikzpicture} &
\begin{tikzpicture}[scale=0.5,
level 1/.style={level distance=7mm,sibling distance=7mm}]
\node(0)[solid node,label=above:{}]{}
child{node(1)[solid node]{}
child{[black] node(11)[solid node]{}}
child{[black] node(111)[solid node]{}}
}
child{[black] node(1111)[solid node]{}};
\end{tikzpicture}
&\begin{tikzpicture}[scale=0.5,
level 1/.style={level distance=7mm,sibling distance=7mm}]
\node(0)[solid node,label=above:{}]{}
child{node(1)[solid node]{}
child{[black] node(11)[solid node]{}}
}
child{node(2)[solid node]{}
child{[black] node(21)[solid node]{}}
};
\end{tikzpicture} \\
\hline
 $\quad{\omega(t)^{\phantom{I}}}^{\phantom{I}}$& $\frac{1}{5}$ & $\frac{3}{20}$ & $\frac{1}{30}$ & $\frac{1}{10}$ & $\frac{1}{20}$ & $-\frac{1}{60}$ & $\frac{1}{60}$ & $\frac{1}{30}$ \\[0.1cm]
 \hline
\end{tabular}
\end{center}
\vspace{0.5cm}
These coefficients have also appeared in the context of relations between cumulants in noncommutative probability. More precisely, in the recent work \cite{CEFPP}, the following formula was proven, which allows writing the monotone cumulants in terms of the free and Boolean cumulants, and noncrossing partitions.

\begin{Theorem}[\cite{CEFPP}]
\label{thm:relationsc}
Let $(\A,\varphi)$ be a noncommutative probability space and let $\{h_n\}_{n\geq1}$, $\{b_n\}_{n\geq1}$ and $\{r_n\}_{n\geq1}$ be the families of monotone cumulants, Boolean cumulants, and free cumulants, respectively. Then, for any $n \geq1$ and elements $a_1,\dots,a_n \in \A$ we have
\begin{gather}
\label{eq:FreetoMon}
	h_n(a_1,\dots,a_n) = \sum_{\pi\in \NC_{\mathrm{irr}}(n)} (-1)^{|\pi|-1}\omega(t(\pi)) r_\pi(a_1,\dots,a_n),
\\ \label{eq:BootoMon}
	h_n(a_1,\dots,a_n) = \sum_{\pi\in \NC_{\mathrm{irr}}(n)} \omega(t(\pi)) b_\pi(a_1,\dots,a_n).
\end{gather}
\end{Theorem}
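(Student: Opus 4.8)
The plan is to work entirely within the shuffle-algebraic framework of Section~\ref{sec:shuffle} and to compute the monotone cumulant character $\rho$ as the associative logarithm of $\Phi$, expanding the result against the Boolean (respectively free) cumulants. Recall from Theorem~\ref{thm:basicEFP} that $\Phi = \exp^*(\rho) = \Expr(\beta) = \Expl(\kappa)$, so that $\rho = \log^*(\Phi)$, with $\beta$ and $\kappa$ the infinitesimal characters encoding Boolean and free cumulants. I would first establish \eqref{eq:BootoMon} and then deduce \eqref{eq:FreetoMon} from it.

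For the Boolean formula I would write $\rho = \log^*(\epsilon + \hat\Phi) = \sum_{k\geq 1}\frac{(-1)^{k+1}}{k}\hat\Phi^{*k}$ with $\hat\Phi = \Phi - \epsilon$, and evaluate on a word $w = a_1\cdots a_n$. Using the iterated coproduct \eqref{eq:copEFP}, each factor of $\hat\Phi$ acts as the character $\Phi$ on a bar-product of connected subwords, and on a single connected subword $\Phi = \Expr(\beta)$ is the Boolean moment-cumulant sum over interval partitions. The ``connected components of the complement'' rule in \eqref{eq:copEFP} is precisely what nests these interval blocks, so that expanding $\hat\Phi^{*k}(w)$ and collecting terms produces a sum of contributions $b_\pi$ indexed by noncrossing partitions $\pi$, the $k$ convolution factors recording a distribution of the blocks of $\pi$ into $k$ ordered levels compatible with nesting. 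One then checks that the contributions of reducible $\pi$ cancel, reflecting the fact that $\rho$ is an infinitesimal character; the surviving sum runs over $\NC_{\mathrm{irr}}(n)$.

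The heart of the argument, and the step I expect to be the main obstacle, is the identification of the coefficient of $b_\pi$. For a fixed $\pi\in\NC_{\mathrm{irr}}(n)$ the admissible distributions of its blocks into exactly $k$ ordered levels should be exactly the surjections from the poset of blocks, that is from the tree of nestings $t(\pi)$, onto $\{1,\dots,k\}$ that strictly increase along the nesting order. By Murua's definition \eqref{eq:muruacoeff} the number of these is $\omega_k(t(\pi))$, so the total coefficient is $\sum_{k=1}^{|\pi|}\frac{(-1)^{k+1}}{k}\omega_k(t(\pi)) = \omega(t(\pi))$, which is \eqref{eq:BootoMon}. Making the orientation of the nesting order agree with the order imposed by the iterated coproduct, and verifying that the level-distributions are counted with no over- or undercounting, is the delicate bookkeeping that has to be carried out carefully.

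Finally, I would obtain \eqref{eq:FreetoMon} from \eqref{eq:BootoMon} by invoking the classical relation expressing Boolean cumulants as sums of free cumulants over irreducible noncrossing partitions, $b_m = \sum_{\sigma\in\NC_{\mathrm{irr}}(m)} r_\sigma$, and substituting block by block into $b_\pi$; after regrouping the resulting free cumulants according to the noncrossing partition they refine, a M\"obius inversion on the corresponding interval of irreducible noncrossing partitions produces the alternating factor $(-1)^{|\pi|-1}$ while leaving $\omega(t(\pi))$ untouched. Alternatively, one can rerun the logarithm computation starting from $\Phi = \Expl(\kappa)$, in which case the sign enters through the free moment-cumulant sum over all of $\NC$ rather than over interval partitions.
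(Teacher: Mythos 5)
First, note that the paper does not actually prove Theorem~\ref{thm:relationsc}: it is imported from \cite{CEFPP}, and the remark following it indicates that the proof there proceeds through the pre-Lie Magnus expansion \eqref{eq:Magnus}, i.e., by solving $\rho=\Omega(\kappa)$ recursively and identifying the resulting Bernoulli-number coefficients with the Murua coefficients. Your route --- expanding $\rho=\log^*(\Phi)=\sum_{k\geq1}\frac{(-1)^{k+1}}{k}\hat\Phi^{*k}$ directly on a word and re-expanding each moment through $\Phi=\Expr(\beta)$ --- is therefore genuinely different. It is essentially the double-tensor-algebra analogue of what the paper itself does for moments in Section~\ref{sec:monotone} (Lemmas~\ref{lemma:bijection} and~\ref{lemma:main}), and your identification of the coefficient of $b_\pi$ in $\hat\Phi^{*k}$ with the number of strictly order-preserving surjections $t(\pi)\to[k]$, hence with $\omega_k(t(\pi))$, is correct (the tensor position of a block in the iterated coproduct does increase with nesting depth, and surjectivity comes from $\hat\Phi$ killing empty legs). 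As a strategy for \eqref{eq:BootoMon} this can be made to work.

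Two steps, however, are not justified as written. First, the cancellation of the reducible partitions does not ``reflect the fact that $\rho$ is an infinitesimal character'': infinitesimality only forces $\rho$ to vanish on bar-products $w_1|w_2$ and says nothing about the coefficients of reducible $\pi$ inside $\rho(a_1\cdots a_n)$ --- indeed, in the \emph{moment} expansion these coefficients do not vanish ($h_2=\varphi_2-\varphi_1\varphi_1$). What you actually need is the combinatorial fact that $\sum_{k}\frac{(-1)^{k+1}}{k}\omega_k(F)=0$ whenever the forest $F=t(\pi)$ has at least two components. This is true --- the polynomial $W_F(q)=\sum_k\omega_k(F)\binom{q}{k}$ counting strictly order-preserving maps $F\to[q]$ is multiplicative over components and vanishes at $q=0$, so its linear coefficient, which is exactly the sum above, vanishes when there are two or more factors --- but it has to be supplied; it is the characterization from \cite{AHLV} recalled in the paper's remark after the theorem. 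Second, the passage from \eqref{eq:BootoMon} to \eqref{eq:FreetoMon} is not a M\"obius inversion. Substituting $b_{|V|}=\sum_{\sigma_V\in\NC_{\mathrm{irr}}(V)}r_{\sigma_V}$ and regrouping shows that \eqref{eq:FreetoMon} is equivalent to the identity $\sum_{\pi}\omega(t(\pi))=(-1)^{|\sigma|-1}\omega(t(\sigma))$, the sum running over irreducible $\pi$ coarsening $\sigma$ with $\sigma$ restricting to an irreducible partition on each block of $\pi$ (and the analogous sum vanishing for reducible $\sigma$). This is a nontrivial identity between Murua coefficients of \emph{different} trees, essentially a result of \cite{AHLV,CEFPP}; asserting that the inversion ``leaves $\omega(t(\pi))$ untouched'' assumes exactly what must be proved. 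Either prove that identity separately, or rerun the $\log^*$ computation against $\Expl(\kappa)$, where the sign $(-1)^{|\pi|-1}$ will have to be extracted with the same care you devote to the Boolean case.
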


\begin{Remark}The formulas that express the monotone cumulants in terms of the free and Boolean cumulants were previously proved in \cite{AHLV} for the univariate case. The authors of~\cite{AHLV} described the coefficient $(-1)^{|\pi|-1}\omega(t(\pi))$ as the linear coefficient of $P_\pi(k)$, where $P_\pi(k)$ counts the number of non-decreasing $k$-colour labelling of the noncrossing partition~$\pi$.
\end{Remark}

\begin{Remark}The main tool of the proof of Theorem \ref{thm:relationsc} is the \textit{pre-Lie Magnus operator}. In~\cite{EFP3} the authors show that~\eqref{eq:shuffleMCRel} implies that $\rho = \Omega(\kappa)$, where
\begin{equation}
 \label{eq:Magnus}
 \Omega(\kappa) = \sum_{n\geq0} \frac{B_n}{n!} L^{(n)}_{\Omega(\kappa)\rhd}(\kappa),
\end{equation}
with $L^{(n)}_{\Omega(\kappa)\rhd}(\kappa) = \Omega(\kappa) \rhd L^{(n-1)}_{\Omega(\kappa)\rhd}(\kappa)$ for $n\geq1$ and $L^{(0)}_{\Omega(\kappa)\rhd}(\kappa)=\kappa$, $B_n$ stands for the $n$-th Bernoulli number, and $\rhd$ is a product on $\mathfrak{g}$ given by $\gamma_1\rhd\gamma_2 = \gamma_1\succ\gamma_2 - \gamma_2\prec\gamma_1$. On the other hand, one can recall that the Baker--Campbell--Hausdorff formula computes the logarithm of the fundamental solution of a matrix-valued linear differential equation $X'(t) = A(t)X (t)$. In this context, the (classical) Magnus expansion, defined in the same fashion that~\eqref{eq:Magnus} but using the iterated commutators, leads to the Magnus formula that computes the derivative of the logarithm of the fundamental solution of such an equation~\cite{Iserles}. In addition, it is known that Murua coefficients appear as the coefficients of the pre-Lie Magnus expansion of the generator of the free pre-Lie algebra of rooted trees~\cite{CelPat}. Thus, we can think that the appearance of Murua coefficients is, in some sense, expected due to its relations with the Baker--Campbell--Hausdorff formula.\looseness=-1
\end{Remark}

\subsection[The description of $\hat\Phi^{*k}$]{The description of $\boldsymbol{\hat\Phi^{*k}}$}\label{ssec:convolpower}

Now we are ready to describe $\tilde\rho(t\otimes a_1\cdots a_n)$ appearing in~\eqref{eq:rhoiota}, where $t$ is a Schr\"oder tree with $n+1$ leaves and elements $a_1,\dots, a_n\in \A$. First, we will need some notation.

Let $t$ be a Schr\"oder tree with $n+1$ leaves, $w=a_1\cdots a_n\in T_+(\A)$ and $k \in \{1,\dots, n\}$. Our objective is to compute $\hat\Phi^{*k}(t\otimes a_1\cdots a_n)$. By definition of the convolution product, we have that\looseness=-1
\begin{equation}
\label{eq:iterated}
	\hat\Phi^{*k}(t\otimes w) = m_{\mathbb{C}}^{[k]} \circ \hat\Phi^{\otimes k}\circ \delta_\A^{[k]}(t\otimes w),
\end{equation}
where $\delta_\A^{[k]}$ is the $k$-fold iterated coproduct
\[
	\delta_\A^{[k]} = \big(\delta_\A \otimes \Id_{\cHS(\A)}^{\otimes k-2}\big)\circ \delta_{\A}^{[k-1]},
\]
with $\delta_\A^{[2]} = \delta_\A$ and $\delta_\A^{[1]} = \Id_{\cHS(\A)}$. A similar notation applies to $m_{\mathbb{C}}^{[k]}$.

 We will also need the following combinatorial object associated to a Schr\"oder tree.

\begin{Definition}
Let $t$ be a Schr\"oder tree with $m$ internal vertices. We define the \textit{skeleton of $t$}, denoted by $\sk(t)$, to be the planar rooted tree with $m$ vertices each of them in correspondence with a unique internal vertex of $t$, and such that there is an arrow from $v$ to $w$ in $\sk(t)$ if and only if, the corresponding vertex to $w$ in $t$ is a child of the corresponding vertex to $v$ in $t$.
\end{Definition}

\begin{Example}
Let $t$ be the unlabelled Schr\"oder tree pictured in Figure \ref{fig:DecSchTree}. Its skeleton $\sk(t)$ is given by the tree
\begin{tikzpicture}[scale=0.4,
level 1/.style={level distance=7mm,sibling distance=12mm}]
\node(0)[solid node,label=above:{}]{}
child{node(1)[solid node]{}
child{[black] node(11)[solid node]{}}
}
child{node(2)[solid node]{}
child{[black] node(21)[solid node]{}}
};
\end{tikzpicture}.
\end{Example}

\begin{Remark}\label{rem:generalcuts}For any planar rooted tree $s$, Definition~\ref{def:admcut} can be extended to the notion of \textit{general admissible cut on $s$} as a subset $c$ of the set of vertices of $s$ such that any path from the root to any leaf, there is at most one vertex of the path contained in $c$. Observe that the difference with Definition~\ref{def:admcut} is that we are allowed to take leaves of $s$, not only internal vertices. However, if $t\in \ST$, there is a clear bijection between $\operatorname{Adm}(t)$ and the set of general admissible cuts of $\sk(t)$.
\end{Remark}

Let $t$ be a Schr\"oder tree with $\ell$ internal vertices whose root is denoted by $r$. For each $1\leq k\leq \ell$, define $\AdmT_k(t)$ to be the set of sequences $(c_1,\dots,c_{k-1})$, where, for each $j=1,\dots,k-1$, $c_j$ is a~general admissible cut of
\[
	\cR_{j-1}(\sk(t)) := R_{c_{j-1}}(R_{c_{j-2}}(\cdots R_{c_0}(\sk(t))\cdots)),
\]
with $\cR_0(\sk(t))=R_{c_0}(\sk(t)) := \sk(t)$ and $\cR_{k-1}(\sk(t)) =\{r\}$, such that any of
\[
	\ccP_{j}(\sk(t)):=P_{c_j}(R_{c_{j-1}}(R_{c_{j-2}}(\cdots R_{c_0}(\sk(t))\cdots)))
\]
is a nonempty forest of single-vertex trees.

\begin{Remark}Let $t\in \ST$ with root $r$ and $\tilde{c}= (c_1,\dots,c_{k-1})\!\in\! \AdmT_k(t)$. Since \mbox{$\cR_{k-1}(\sk(t)) = \{r\}$}, we have that $\tilde{c}$ is an ordered partition of the vertex set of $\sk(t)$ minus the root of $t$. Furthermore, since any $\ccP_j(\sk(t))$ is a nonempty forest of single-vertex trees, it is easy to see that $c_j$ is a subset of the set of leaves of $\cR_{j-1}(\sk(t))$ and $ \ccP_j(\sk(t)) = c_j,$ for any $1\leq j <k$.
\end{Remark}

The following lemma proves that the cardinality of the set $\AdmT_k(t)$ is given precisely by~$\omega_k(t)$.

\begin{Lemma}\label{lemma:bijection}Let $t$ be a Schr\"oder tree and $r$ be the roof of $t$. Then there is a bijection between $\AdmT_k(t)$ and the set of strictly order-preserving surjective functions $f\colon \sk(t)\to [k]$.
\end{Lemma}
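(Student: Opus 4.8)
The plan is to construct an explicit bijection by recording, from a sequence of cuts, the \emph{level} at which each vertex of $\sk(t)$ is removed. Throughout I regard $\sk(t)$ as a rooted-tree poset in which the root $r$ is the unique minimum and $v<w$ precisely when $v$ is a strict ancestor of $w$; this is the convention under which the nesting order of the paper places the outermost vertex at the bottom, and it is the one for which an order-preserving surjection $f\colon \sk(t)\to[k]$ is exactly one counted by $\omega_k(t)$.

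First I would define the forward map $\Psi$ from $\AdmT_k(t)$ to the set of strictly order-preserving surjections $f\colon \sk(t)\to[k]$. Given $\tilde c=(c_1,\dots,c_{k-1})\in\AdmT_k(t)$, the Remark preceding the lemma tells us that $c_1,\dots,c_{k-1},\{r\}$ is an ordered partition of the vertex set of $\sk(t)$, so I may set $f=\Psi(\tilde c)$ unambiguously by $f(v)=k+1-j$ when $v\in c_j$ and $f(r)=1$. Surjectivity is immediate, since every $c_j$ is nonempty and the root supplies the value $1$. For strict order-preservation I would use that, because each $\ccP_j(\sk(t))=c_j$ is a forest of single-vertex trees, $c_j$ consists of leaves of the trunk $\cR_{j-1}(\sk(t))$; hence a vertex is cut at a given step only after all of its strict descendants have been cut at earlier steps. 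Consequently, if $v<w$ with $w\in c_i$ and $v\in c_j$ (or $v=r$), then necessarily $i<j$, whence $f(w)=k+1-i>k+1-j=f(v)$.

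Next I would exhibit the inverse $\Phi$. Given a strictly order-preserving surjection $f$, since $r$ lies below every other vertex we get $f(r)<f(v)$ for all $v\neq r$, which forces $f(r)=1$ and $f^{-1}(1)=\{r\}$; I then put $c_j:=f^{-1}(k+1-j)$ for $1\le j\le k-1$. The key verification is that this sequence lies in $\AdmT_k(t)$, which I would establish by induction on $j$: if $v\in c_j$ had a child $w$ surviving in $\cR_{j-1}(\sk(t))$, then $v<w$ would give $f(w)>f(v)=k+1-j$, contradicting that $k+1-j$ is the maximal value among vertices not yet removed. Thus each $c_j$ is exactly the set of top-valued leaves of $\cR_{j-1}(\sk(t))$; being a set of leaves it is automatically a general admissible cut, it is nonempty because $f$ is surjective, and $\ccP_j$ is a forest of single vertices. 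After $k-1$ steps only $f^{-1}(1)=\{r\}$ remains, i.e.\ $\cR_{k-1}(\sk(t))=\{r\}$, as required. Comparing the two value assignments shows at once that $\Psi$ and $\Phi$ are mutually inverse.

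The main obstacle I anticipate is making rigorous the coupling claim, namely that \emph{a vertex is removed exactly when all of its strict descendants have already been removed}, which is precisely what translates the combinatorial peeling of cuts into the order-theoretic count $\omega_k(t)$. This rests entirely on the Remark's observation that each $\ccP_j$ is a forest of single-vertex trees, so that every cut $c_j$ may select only current leaves of $\cR_{j-1}(\sk(t))$; once that fact is in place, both the order-preservation needed for $\Psi$ and the admissibility needed for $\Phi$ follow from the same monotonicity argument, and the bijection is complete.
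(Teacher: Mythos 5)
Your proposal is correct and follows essentially the same route as the paper: the forward map sends $(c_1,\dots,c_{k-1})$ to the function with $f(r)=1$ and $f(v)=k-j+1$ for $v\in c_j$, strict order-preservation is deduced from the fact that each $\ccP_j(\sk(t))=c_j$ forces cuts to be taken at current leaves (so descendants are removed before ancestors), and the inverse is built by setting $c_j=f^{-1}(k-j+1)$ and checking admissibility by the same ``top-valued vertices must be leaves'' contradiction. The only cosmetic difference is that you organize the admissibility check for the inverse as an explicit induction on $j$, which the paper states more informally, so there is nothing substantive to add.
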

\begin{proof}For any $(c_1,\dots,c_{k-1})\in \AdmT_k(t)$, we define the function $f\colon \sk(t)\to[k]$ by $f(r) = 1$, and $f(v) = k - j+1$ if $v\in c_{j}$.

We will show that the previous map is well defined. Indeed, observe that is $(c_1,\dots,c_k,\{r\})$ is an ordered partition of $\sk(t)$. Moreover, since every $c_j$ is a nonempty set, we have that $f$ is surjective. On the other hand, take $v$, $w$ vertices of $\sk(t)$ such that $v<w$, i.e., there is a~downward directed path from $v$ to $w$ in $\sk(t)$. The elements of the subsequence $(c_1,\dots,c_j)$ are consecutive prunings of $\sk(t)$ and these are taken from the leaves of $\cR_{j-1}(\sk(t))$ upward the root as $j$ increases. Then we have that $v\in c_i$, $w\in c_j$ and $j<i$, and in particular $f(v) = k-i+1 < k-j+1 = f(w)$. Hence $f$ is strictly order-preserving.

 It is clear that two different elements in $\AdmT_k(t)$ will produce two different functions. It remains to show that our correspondence is onto. To this end, for any $f\colon \sk(t)\to[k]$ strictly order preserving surjective function, define the sequence of sets $(c_1,\dots,c_{k-1})$ by
 \[
 c_j = \{ v\in \sk(t)\colon f(v) = k-j+1\},
 \]
 for any $1\leq j<k$. Now, observe that $c_1$ is a general admissible cut of $\sk(t)$ since $c_1$ is a subset of the set of leaves of $\sk(t)$. Actually, if $c_1$ contains a vertex $v$ that is not a leaf, there exists a~descendant of $v$, namely $w$ such that $k = f(v) < f(w)$ and this is a~contradiction. Hence $c_1$ is a~general admissible cut such that $\ccP_1(\sk(t)) = P_{c_1}(\sk(t)) = c_1$, i.e., $\ccP_{1}(\sk(t))$ is a~nonempty forest of single-vertex trees. Observe that the same argument can be applied to the tree $\cR_1(\sk(t))$ in order to show that $c_2$ is a general admissible cut such that $\ccP_{2}(\sk(t)) = c_2$ is a nonempty forest of single-vertex trees, and in general, $c_j$ is a general admissible cut such that $\ccP_j(\sk(t))= c_j$ is a nonempty forest of single-vertex trees, for any $j=1,\dots,k-1$. Since $f$ is order-preserving, we have that $f^{-1}(1) = \{r\}$. In addition since~$f$ is surjective, we have that $(c_1,\dots,c_{k-1},\{r\})$ is an ordered partition of the set of vertices of $\sk(t)$ with $\cR_{k-1}(\sk(t)) = \{r\}$. The above reasoning shows that $(c_1,\dots,c_{k-1})\in \AdmT_k(t)$ is a tuple such that their corresponding strictly order preserving surjective function is~$f$. Hence the map is onto, and the proof is complete.
\end{proof}

The main lemma of this section, which establishes the connection between the $\omega$ coefficients and the moments of random variables, is the following.

\begin{Lemma}
\label{lemma:main}
Let $(\A,\varphi)$ be a noncommutative probability space. Also, let $t \in \ST(n)$ and a~word $w=a_1\cdots a_n\in T_+(\A)$, for $n\geq1$. If $\tilde{\Phi}$ is the extension of $\varphi$ to a character as defined in~\eqref{eq:tildephi} and $\tilde\rho = \log^*(\tilde{\Phi})$, then we have
\begin{equation} \label{eq:tilderho10}
 \tilde\rho(t\otimes w) = \omega(\sk(t)) \varphi_{\pi(t)}(a_1,\dots,a_n).
\end{equation}
\end{Lemma}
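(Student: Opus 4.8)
The plan is to compute $\tilde\rho = \log^*\big(\tilde\Phi\big) = \sum_{k\ge1}\frac{(-1)^{k+1}}{k}\hat\Phi^{*k}$ term by term on $t\otimes w$, where $\hat\Phi = \tilde\Phi-\varepsilon_\A$, and then to recognize the resulting scalar sum as the defining expression \eqref{eq:muruacoeff} of $\omega(\sk(t))$. The starting observation is that by \eqref{eq:tildephi} the character $\tilde\Phi$ is supported on corollas; since it is multiplicative and $\hat\Phi$ agrees with $\tilde\Phi$ on every non-unit forest while $\hat\Phi(\text{unit})=0$, the functional $\hat\Phi$ is nonzero only on nonempty forests all of whose trees are decorated corollas. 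Thus, in the expansion $\hat\Phi^{*k}(t\otimes w) = m_{\mathbb{C}}^{[k]}\circ\hat\Phi^{\otimes k}\circ\delta_\A^{[k]}(t\otimes w)$ of \eqref{eq:iterated}, only those terms of the iterated coproduct survive in which all $k$ tensor factors are nonempty forests of corollas.

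First I would identify the surviving terms. Reading $\delta_\A^{[k]}$ as $k-1$ successive prunings, a term contributes if and only if each of the $k-1$ prunings together with the final trunk is a forest of corollas; equivalently, after passing to the skeleton through the correspondence of Remark~\ref{rem:generalcuts}, if and only if each pruning consists of single-vertex trees of $\sk(t)$ and the last trunk is the root alone. This is exactly the data recorded by a sequence $(c_1,\dots,c_{k-1})\in\AdmT_k(t)$. Hence $\hat\Phi^{*k}(t\otimes w)$ equals a sum over $\AdmT_k(t)$ of the product of $\tilde\Phi$ over the corolla factors of the associated layering.

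The key step is to show that every such layering contributes the \emph{same} scalar, namely $\varphi_{\pi(t)}(a_1,\dots,a_n)$. Since prunings peel corollas off from the leaf side upward, when the corolla rooted at an internal vertex $v$ is detached its internal children have already been removed and replaced by leaves, so this factor is precisely the corolla at $v$ carrying its original sector decorations. By Definition~\ref{def:pit} these sector labels form the block $V$ of $\pi(t)$ attached to $v$, and reading them left to right in increasing order of index gives $\tilde\Phi$ of this factor $=\varphi_{|V|}(a_1,\dots,a_n\mid V)$. As the internal vertices of $t$ are in bijection with the blocks of $\pi(t)$ and each occurs in exactly one layer, the product over all factors is $\prod_{V\in\pi(t)}\varphi_{|V|}(a_1,\dots,a_n\mid V)=\varphi_{\pi(t)}(a_1,\dots,a_n)$, independent of the chosen element of $\AdmT_k(t)$. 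The careful bookkeeping of how sector decorations are inherited under iterated pruning is the main technical obstacle, and the one point I would verify in full detail.

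Combining these facts, $\hat\Phi^{*k}(t\otimes w)=|\AdmT_k(t)|\,\varphi_{\pi(t)}(a_1,\dots,a_n)$, and Lemma~\ref{lemma:bijection} gives $|\AdmT_k(t)|=\omega_k(\sk(t))$. Summing over $k$ then yields $\tilde\rho(t\otimes w)=\big(\sum_{k\ge1}\frac{(-1)^{k+1}}{k}\omega_k(\sk(t))\big)\varphi_{\pi(t)}(a_1,\dots,a_n)$, where the terms with $k>|\sk(t)|$ vanish, and the parenthesized scalar is exactly $\omega(\sk(t))$ by \eqref{eq:muruacoeff}. This establishes \eqref{eq:tilderho10}.
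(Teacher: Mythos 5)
Your proposal is correct and follows essentially the same route as the paper's proof: expand $\log^*$ via the iterated coproduct, observe that only iterated cuts peeling off forests of corollas survive, identify these with $\AdmT_k(t)$ and hence with $\omega_k(\sk(t))$ via Lemma~\ref{lemma:bijection}, and note that each surviving term contributes $\varphi_{\pi(t)}(a_1,\dots,a_n)$. The only difference is that you spell out the bookkeeping of sector decorations under iterated pruning slightly more explicitly than the paper does.
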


\begin{proof}
Recall that
\[
	\tilde\rho(t\otimes w) = \sum_{k\geq 1} \frac{(-1)^{k+1}}{k} \hat\Phi^{*k} (t\otimes a_1\cdots a_n),
\]
where $\hat\Phi^{*k}$ is defined in \eqref{eq:iterated}. By \eqref{eq:muruacoeff}, it is enough to show that
\begin{equation}\label{eq:enough}
	\hat\Phi^{*k} (t\otimes a_1\cdots a_n) = \omega_k(\sk(t)) \varphi_{\pi(t)}(a_1,\dots,a_n),\qquad\mbox{for any }k\geq1.
\end{equation}
Let $k\geq 1$. First, we observe that according to the definition of $\delta_\A$, the $k$-fold iterated coproduct is of the form
\begin{equation}\label{eq:iteratedcop}
	\delta_\A^{[k]}(t\otimes a_1\cdots a_n) = \sum_{\mbox{\tiny{iterated admissible cuts of $t$}}}
	f_1\otimes\cdots \otimes f_k,
\end{equation}
where $f_1$ is a decorated subtree of $t$, each $f_2,\dots,f_k$ is a forest of decorated subtrees of $t$, and the sum is over iterated admissible cuts, i.e., doing admissible cuts on each iteration of the coproduct. Now observe that for any decorated tree, its evaluation on $\hat\Phi = \tilde\Phi - \varepsilon$ is equal to zero if the tree is empty or is not a~corolla. In particular, a~term $f_1\otimes\cdots \otimes f_k$ in the sum~\eqref{eq:iteratedcop} will produce a~zero contribution on $\tilde\Phi^{*k}$ if any of $f_1,\dots,f_k$ is the empty forest or is a~forest which contains a~tree that is not a corolla. Hence, the only terms that can produce a~non-zero contribution are such that $f_1$ is a corolla, and each of $f_2,\dots,f_k$ is a~forest of corollas. On the other hand, since corollas are associated to single-vertex trees via the skeleton map, by Remark~\ref{rem:generalcuts} the sum in~\eqref{eq:iteratedcop} is done precisely over the set $\AdmT_k(t)$.

Finally, by the definition of $\tilde\Phi$ in~\eqref{eq:tildephi}, the fact that $\hat\Phi$ is multiplicative, Definition \ref{def:pit}, and Lemma \ref{lemma:bijection}, the $k$-fold convolution product is equal to
\begin{align*}
	\hat\Phi^{*k}(t\otimes a_1\cdots a_n)
	&= \sum_{\mbox{\tiny{iterated admissible cuts of $t$}}} \hat\Phi(f_1)\cdots \hat\Phi(f_k)\\
	&= \sum_{\tilde{c}\in \AdmT_k(t)} \varphi_{\pi(t)}(a_1,\dots,a_n)= \omega_k(\sk(t)) \varphi_{\pi(t)}(a_1,\dots,a_n),
\end{align*}
where we get \eqref{eq:enough}.
\end{proof}

Now we are ready to prove Theorem~\ref{thm:mainthm1}.

\begin{proof}[Proof of Theorem \ref{thm:mainthm1}]
Let $\rho\colon T(T_+(\A))\to\mathbb{C}$ be the infinitesimal character associated to the monotone cumulants given by Theorem~\ref{thm:linkNCP}. By~\eqref{eq:rhoiota} and Lemma~\ref{lemma:main} we obtain
\begin{align*}
h_n(a_1,\dots,a_n) &= \rho(a_1\cdots a_n)
= \sum_{t\in \ST(n)} \tilde\rho(t\otimes a_1\cdots a_n)
\\ &= \sum_{t\in \ST(n)} \omega(\sk(t))\varphi_{\pi(t)} (a_1,\dots,a_n).\tag*{\qed}
\end{align*}\renewcommand{\qed}{}
\end{proof}

As mentioned in the introduction, the following is a direct corollary of our main theorem.

\begin{Corollary}
\label{cor:maincor}
Let $(\A,\varphi)$ be a noncommutative probability space and let $\{h_n\colon \A^n\to\mathbb{C}\}_{n\geq1}$ be the family of monotone cumulants of $(\A,\varphi)$. Then, for any $n\geq1$ and $a_1,\dots,a_n\in \A$, we have that
\[
	h_n(a_1,\dots,a_n) = \sum_{\pi\in\NC(n)} \alpha(\pi) \varphi_\pi(a_1,\dots,a_n),
\]
where
\[
	\alpha(\pi)
	= \sum_{\substack{t\in \ST(n)\\ \pi(t)
	= \pi}} \omega(\sk(t)),\qquad\forall\,\pi\in\NC(n).
\]
\end{Corollary}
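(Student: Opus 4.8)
The plan is to obtain this purely by reorganizing the sum in Theorem~\ref{thm:mainthm1}; all the analytic and combinatorial content already lives in that theorem, so what remains is bookkeeping.

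First I would recall the main formula
\[
	h_n(a_1,\dots,a_n) = \sum_{t\in\ST(n)} \omega(\sk(t))\,\varphi_{\pi(t)}(a_1,\dots,a_n),
\]
and observe that Definition~\ref{def:pit} assigns to every $t\in\ST(n)$ a well-defined element $\pi(t)\in\NC(n)$. Thus $t\mapsto\pi(t)$ is a map $\ST(n)\to\NC(n)$, and its fibers $\{t\in\ST(n)\colon \pi(t)=\pi\}$, as $\pi$ ranges over $\NC(n)$, form a partition of the finite set $\ST(n)$. A fiber is allowed to be empty, in which case the associated coefficient below is an empty sum equal to $0$ and contributes nothing.

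Next I would split the sum over $\ST(n)$ along these fibers, summing first over $\pi\in\NC(n)$ and then over the trees $t$ mapping to $\pi$. On each fiber the functional $\varphi_{\pi(t)}$ is constant and equal to $\varphi_\pi$, so it factors out of the inner sum:
\[
	h_n(a_1,\dots,a_n) = \sum_{\pi\in\NC(n)} \Biggl(\sum_{\substack{t\in\ST(n)\\ \pi(t)=\pi}} \omega(\sk(t))\Biggr)\varphi_\pi(a_1,\dots,a_n).
\]
Identifying the inner sum with $\alpha(\pi)$ then yields precisely the claimed formula.

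There is no genuine obstacle here: the result is an immediate consequence of Theorem~\ref{thm:mainthm1}, and the only point deserving a word of justification is that $\pi(\cdot)$ is a well-defined function $\ST(n)\to\NC(n)$, which is exactly the content of Definition~\ref{def:pit}. The substantive step—establishing that the coefficient attached to each Schr\"oder tree is the Murua coefficient $\omega(\sk(t))$—has already been carried out in Lemma~\ref{lemma:main} and Theorem~\ref{thm:mainthm1}, so the corollary merely records the coarser, noncrossing-partition-indexed form of the same identity.
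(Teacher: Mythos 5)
Your proposal is correct and matches the paper's intent exactly: the paper presents Corollary~\ref{cor:maincor} as a direct consequence of Theorem~\ref{thm:mainthm1}, obtained by regrouping the sum over $\ST(n)$ according to the fibers of the map $t\mapsto\pi(t)$, which is precisely your argument. The only addition you make is the (harmless and correct) remark that empty fibers contribute $\alpha(\pi)=0$.
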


\tikzset{
solid node/.style={circle,draw,inner sep=1.5,fill=black, minimum size = 0.02em},
hollow node/.style={circle,draw,inner sep=1.5,fill=white}
}

\begin{Example}
We will verify the formula provided by Corollary \ref{cor:maincor} for the case $n=3$. From the monotone moment-cumulant formula \eqref{eq:monMC}, one can get the first cumulants in terms of moments
\begin{gather*}
\varphi_1(a_1) = h_1(a_1),
\\ \varphi_2(a_1,a_2) = h_2(a_1,a_2) + h_1(a_1)h_1(a_2),
\\ \varphi_3(a_1,a_2,a_3) = h_3(a_1,a_2,a_3) + h_2(a_1,a_2)h_1(a_3) + h_2(a_2,a_3)h_1(a_1)\\
\hphantom{\varphi_3(a_1,a_2,a_3) =}{} + \frac{1}{2} h_2(a_1,a_3)h_1(a_2) + h_1(a_1)h_1(a_2)h_1(a_3).
\end{gather*}
Hence we obtain
\begin{align}
h_3(a_1,a_2,a_3) ={}& \varphi_3(a_1,a_2,a_3) - \varphi_1(a_1)\varphi_2(a_2,a_3) - \varphi_1(a_3)\varphi_2(a_2,a_3)\nonumber \\ &{}
-\frac{1}{2}\varphi_1(a_2)\varphi_2(a_1,a_3) + \frac{3}{2} \varphi_1(a_1)\varphi_1(a_2)\varphi_1(a_3). \label{eq:example}
\end{align}
On the other hand, by using Corollary \ref{cor:maincor}, the Schr\"oder trees listed in Figure \ref{fig:st3} and the table with the values of $\omega$, we have that the respective coefficients $\alpha$ are given by
\begin{align*}
&\alpha\big( \begin{tikzpicture}[baseline={([yshift=0.7ex]current bounding box.center)},thick,font=\tiny]
 \path 	(0,0) 		node (a) {\phantom{1}}
 	(0.5,0) 	node (b) {\phantom{1}}
 	(1,0) 		node (c) {\phantom{1}};
 \draw (a) -- +(0,0.60) -| (c);
 \draw (b) -- +(0,0.60) -| (b);
 \end{tikzpicture} \big)= \omega\bigg(\sk\bigg( \begin{tikzpicture}[scale=0.4,
level 1/.style={level distance=7mm,sibling distance=7mm}]
\node(0)[solid node,label=above:{}]{}
child{node(1)[hollow node]{}
}
child{node(2)[hollow node]{}
}
child{[black] node(11)[hollow node]{}
}
child{[black] node(111)[hollow node]{}
};
\end{tikzpicture} \bigg) \bigg) = \omega(\begin{tikzpicture}[scale=0.4,
level 1/.style={level distance=7mm,sibling distance=7mm}]
\node(0)[solid node,label=above:{}]{};\end{tikzpicture}
) =1,\\
&\alpha\big(\begin{tikzpicture}[baseline={([yshift=0.7ex]current bounding box.center)},thick,font=\tiny]
 \path 	(0,0) 		node (a) {\phantom{1}}
 	(0.5,0) 	node (b) {\phantom{1}}
 	(1,0) 		node (c) {\phantom{1}};
 \draw (a) -- +(0,0.60) -| (a);
 \draw (b) -- +(0,0.60) -| (c);
 \end{tikzpicture} \big)= \omega\bigg(\sk\bigg( \begin{tikzpicture}[baseline={([yshift=-1.4ex]current bounding box.center)},scale=0.4,
level 1/.style={level distance=7mm,sibling distance=7mm}]
\node(0)[solid node,label=above:{}]{}
child{node(1)[hollow node]{}
}
child{node(2)[solid node]{}
	child{[black] node(11)[hollow node]{}}
	child{[black] node(111)[hollow node]{}}
	child{[black] node(1111)[hollow node]{}}
};
\end{tikzpicture} \bigg) \bigg)+\omega\bigg(\sk\bigg( \begin{tikzpicture}[baseline={([yshift=-1.4ex]current bounding box.center)},scale=0.4,
level 1/.style={level distance=7mm,sibling distance=7mm}]
\node(0)[solid node,label=above:{}]{}
child{node(1)[solid node]{}
	child{[black] node(11)[hollow node]{}}
	child{[black] node(111)[hollow node]{}}
}
child{node(2)[hollow node]{}
}
child{node(3)[hollow node]{}};
\end{tikzpicture} \bigg) \bigg) = \omega\bigg(\begin{tikzpicture}[baseline={([yshift=-1.4ex]current bounding box.center)},scale=0.4,
level 1/.style={level distance=7mm,sibling distance=7mm}]
\node(0)[solid node,label=above:{}]{}child{node(1)[solid node]{}};\end{tikzpicture}
\bigg) + \omega\bigg(\begin{tikzpicture}[baseline={([yshift=-1.4ex]current bounding box.center)},scale=0.4,
level 1/.style={level distance=7mm,sibling distance=7mm}]
\node(0)[solid node,label=above:{}]{}child{node(1)[solid node]{}};\end{tikzpicture}
\bigg) =-1,
\\
&\alpha\big( \begin{tikzpicture}[baseline={([yshift=0.7ex]current bounding box.center)},thick,font=\tiny]
 \path 	(0,0) 		node (a) {\phantom{1}}
 	(0.5,0) 	node (b) {\phantom{1}}
 	(1,0) 		node (c) {\phantom{1}};
 \draw (a) -- +(0,0.60) -| (b);
 \draw (c) -- +(0,0.60) -| (c);
 \end{tikzpicture} \big)= \omega\bigg(\sk\bigg( \begin{tikzpicture}[baseline={([yshift=-1.4ex]current bounding box.center)},scale=0.4,
level 1/.style={level distance=7mm,sibling distance=7mm}]
\node(0)[solid node,label=above:{}]{}
child{node(1)[solid node]{}
	child{[black] node(11)[hollow node]{}}
	child{[black] node(111)[hollow node]{}}
	child{[black] node(1111)[hollow node]{}}
}
child{node(2)[hollow node]{}
};
\end{tikzpicture} \bigg) \bigg)+\omega\bigg(\sk\bigg( \begin{tikzpicture}[baseline={([yshift=-1.4ex]current bounding box.center)},scale=0.4,
level 1/.style={level distance=7mm,sibling distance=7mm}]
\node(0)[solid node,label=above:{}]{}
child{node(1)[hollow node]{}
	}
child{node(2)[hollow node]{}
}
child{node(3)[solid node]{}
	child{[black] node(11)[hollow node]{}}
	child{[black] node(111)[hollow node]{}}
};
\end{tikzpicture} \bigg) \bigg) = \omega\bigg(\begin{tikzpicture}[baseline={([yshift=-1.4ex]current bounding box.center)},scale=0.4,
level 1/.style={level distance=7mm,sibling distance=7mm}]
\node(0)[solid node,label=above:{}]{}child{node(1)[solid node]{}};\end{tikzpicture}
\bigg) + \omega\bigg(\begin{tikzpicture}[baseline={([yshift=-1.4ex]current bounding box.center)},scale=0.4,
level 1/.style={level distance=7mm,sibling distance=7mm}]
\node(0)[solid node,label=above:{}]{}child{node(1)[solid node]{}};\end{tikzpicture}
\bigg) =-1,
\\
&\alpha\big({ \begin{tikzpicture}[baseline={([yshift=0.7ex]current bounding box.center)},thick,font=\tiny]
 \path 	(0,0) 		node (a) {\phantom{1}}
 	(0.5,0) 	node (b) {\phantom{1}}
 	(1,0) 		node (c) {\phantom{1}};
 \draw (a) -- +(0,0.60) -| (c);
 \draw (b) -- +(0,0.45) -| (b);
 \end{tikzpicture} }\big)= \omega\bigg(\sk\bigg( \begin{tikzpicture}[baseline={([yshift=-1.4ex]current bounding box.center)},scale=0.4,
level 1/.style={level distance=7mm,sibling distance=7mm}]
\node(0)[solid node,label=above:{}]{}
child{node(1)[hollow node]{}
	}
child{node(2)[solid node]{}
	child{[black] node(11)[hollow node]{}}
	child{[black] node(111)[hollow node]{}}
}
child{node(3)[hollow node]{}
};
\end{tikzpicture} \bigg) \bigg) = \omega\bigg(\begin{tikzpicture}[baseline={([yshift=-1.4ex]current bounding box.center)},scale=0.4,
level 1/.style={level distance=7mm,sibling distance=7mm}]
\node(0)[solid node,label=above:{}]{}child{node(1)[solid node]{}};\end{tikzpicture}
\bigg) =-\frac{1}{2},
\\
&\alpha\big(\begin{tikzpicture}[baseline={([yshift=0.7ex]current bounding box.center)},thick,font=\tiny]
 \path 	(0,0) 		node (a) {\phantom{1}}
 	(0.5,0) 	node (b) {\phantom{1}}
 	(1,0) 		node (c) {\phantom{1}};
 \draw (a) -- +(0,0.60) -| (a);
 \draw (b) -- +(0,0.60) -| (b);
 \draw (c) -- +(0,0.60) -| (c);
 \end{tikzpicture} \big)= \omega\bigg(\sk\bigg( \begin{tikzpicture}[baseline={([yshift=-1.4ex]current bounding box.center)},scale=0.4,
level 1/.style={level distance=7mm,sibling distance=7mm}]
\node(0)[solid node,label=above:{}]{}
child{node(1)[solid node]{}
 	child{[black] node(11)[solid node]{}
		child{[black] node(112)[hollow node]{}}
		child{[black] node(1112)[hollow node]{}}
	}
	child{[black] node(111)[hollow node]{}}
	}
child{node(2)[hollow node]{}
};
\end{tikzpicture} \bigg) \bigg)
+ \omega\bigg(\sk\bigg( \begin{tikzpicture}[baseline={([yshift=-1.4ex]current bounding box.center)},scale=0.4,
level 1/.style={level distance=7mm,sibling distance=7mm}]
\node(0)[solid node,label=above:{}]{}
child{node(1)[solid node]{}
 	child{[black] node(11)[hollow node]{}
	}
	child{[black] node(111)[solid node]{}
		child{[black] node(112)[hollow node]{}}
		child{[black] node(1112)[hollow node]{}}
	}	
	}
child{node(2)[hollow node]{}
};
\end{tikzpicture} \bigg) \bigg)
+\omega\bigg(\sk\bigg( \begin{tikzpicture}[baseline={([yshift=-1.4ex]current bounding box.center)},scale=0.4,
level 1/.style={level distance=7mm,sibling distance=7mm}]
\node(0)[solid node,label=above:{}]{}
child{node(1)[hollow node]{}	
	}
child{node(2)[solid node]{}
	child{[black] node(11)[solid node]{}
		child{[black] node(112)[hollow node]{}}
		child{[black] node(1112)[hollow node]{}}
	}
	child{[black] node(111)[hollow node]{}	}
};
\end{tikzpicture} \bigg) \bigg)
\\
& \hphantom{\alpha\big(\begin{tikzpicture}[baseline={([yshift=0.7ex]current bounding box.center)},thick,font=\tiny]
 \path 	(0,0) 		node (a) {\phantom{1}}
 	(0.5,0) 	node (b) {\phantom{1}}
 	(1,0) 		node (c) {\phantom{1}};
 \draw (a) -- +(0,0.60) -| (a);
 \draw (b) -- +(0,0.60) -| (b);
 \draw (c) -- +(0,0.60) -| (c);
 \end{tikzpicture} \big)=}{}
+ \omega\bigg(\sk\bigg( \begin{tikzpicture}[baseline={([yshift=-1.4ex]current bounding box.center)},scale=0.4,
level 1/.style={level distance=7mm,sibling distance=7mm}]
\node(0)[solid node,label=above:{}]{}
child{node(1)[hollow node]{}
	}
child{node(2)[solid node]{}
	child{[black] node(11)[hollow node]{}
	}
	child{[black] node(111)[solid node]{}
		child{[black] node(112)[hollow node]{}}
		child{[black] node(1112)[hollow node]{}}
	}
};
\end{tikzpicture} \bigg) \bigg)
+ \omega\bigg(\sk\bigg( \begin{tikzpicture}[baseline={([yshift=-1.4ex]current bounding box.center)},scale=0.4,
level 1/.style={level distance=8mm,sibling distance=12mm},
level 2/.style={level distance=8mm,sibling distance=6mm},
]
\node(0)[solid node,label=above:{}]{}
child{node(1)[solid node]{}
	child{[black] node(112)[hollow node]{}}
		child{[black] node(1112)[hollow node]{}}
	}
child{node(2)[solid node]{}
	child{[black] node(11)[hollow node]{}
	}
	child{[black] node(111)[hollow node]{}
	}
};
\end{tikzpicture} \bigg) \bigg)
\\
 &
\hphantom{\alpha\big(\begin{tikzpicture}[baseline={([yshift=0.7ex]current bounding box.center)},thick,font=\tiny]
 \path 	(0,0) 		node (a) {\phantom{1}}
 	(0.5,0) 	node (b) {\phantom{1}}
 	(1,0) 		node (c) {\phantom{1}};
 \draw (a) -- +(0,0.60) -| (a);
 \draw (b) -- +(0,0.60) -| (b);
 \draw (c) -- +(0,0.60) -| (c);
 \end{tikzpicture} \big)}{}
= 4 \omega \bigg( \begin{tikzpicture}[baseline={([yshift=-1.4ex]current bounding box.center)},scale=0.4,
level 1/.style={level distance=7mm,sibling distance=7mm}]
\node(0)[solid node,label=above:{}]{}
child{node(1)[solid node]{}
	child{node(2)[solid node]{{}}}
	};\end{tikzpicture}\bigg)
 + \omega \bigg( \begin{tikzpicture}[baseline={([yshift=-1.4ex]current bounding box.center)},scale=0.4,
level 1/.style={level distance=7mm,sibling distance=7mm}]
\node(0)[solid node,label=above:{}]{}
child{node(1)[solid node]{}}
	child{node(2)[solid node]{}
	};\end{tikzpicture}\bigg) = \frac{3}{2}.
\end{align*}
We can observe that the obtained $\alpha$ coefficients match the coefficients appearing in \eqref{eq:example}.
\end{Example}

\begin{Remark}
A different approach to give a formula that writes monotone cumulants in terms of moments is to combine equations \eqref{eq:FreetoMon}, \eqref{eq:BootoMon} with \eqref{eq:freeCM}, \eqref{eq:BoolCM}. For instance, combining \eqref{eq:BootoMon} and \eqref{eq:BoolCM}, we obtain for any $n\geq1$ and $a_1,\dots,a_n\in \A$
\begin{align*}
h_n(a_1,\dots,a_n) &= \sum_{\pi\in \NC_{\mathrm{irr}}(n)} \omega(t(\pi)) \prod_{V\in \pi} b_{|V|}(a_1,\dots,a_n|V)
\\ &= \sum_{\pi\in \NC_{\mathrm{irr}}(n)} \omega(t(\pi)) \prod_{V\in \pi} \sum_{\sigma \in \Int(V)} (-1)^{|\sigma|-1} \varphi_\sigma(a_1,\dots,a_n|V),
\end{align*}
where $\Int(V)$ stands for the set of interval partitions of the set $V$. Observe that if $\pi\in \NC_{\mathrm{irr}}(n)$ and for each $V\in \pi$ we take any $\sigma_V \in \Int(V)$, when we consider the disjoint union $\sigma :=\bigsqcup_{V\in \pi} \sigma_V$ we will obtain a noncrossing partition such that $\sigma\leq \pi$ such that the restriction $\sigma|_V$ on each $V\in \pi$ is an interval partition. If we denote by $ \pi_1 \sqsubseteq\pi_2$ to the fact $\pi_1$ and $\pi_2$ are noncrossing partitions satisfying the two previous conditions, we have that $\sqsubseteq$ is indeed a partial order on $\NC(n)$ (see~\cite{JVer}). With this notation, we have
\begin{align*}
h_n(a_1,\dots,a_n) &= \sum_{\pi\in \NC_{\mathrm{irr}}(n)} \omega(t(\pi)) \sum_{\sigma\sqsubseteq\pi} (-1)^{|\sigma|- |\pi|} \varphi_\sigma(a_1,\dots,a_n)
\\ &= \sum_{\sigma \in \NC(n)} \varphi_\sigma(a_1,\dots,a_n) \sum_{\substack{\pi\in \NC_{\mathrm{irr}}(n)\\\sigma\sqsubseteq\pi}} (-1)^{|\sigma|- |\pi|} \omega(t(\pi)).
\end{align*}
By Corollary \ref{cor:maincor}, we should have that
\begin{equation}\label{eq:alphairrMurua}
\alpha(\pi ) = \sum_{\substack{\sigma\in \NC_{\mathrm{irr}}(n)\\\pi\sqsubseteq\sigma}} (-1)^{|\pi|- |\sigma|} \omega(t(\sigma)).
\end{equation}
Observe that the formula in \eqref{eq:alphairrMurua} is different from ours since, in general, the $\omega$ map evaluated on the tree of nesting of $\sigma$ is not the same that the evaluation of $\omega$ on a Schr\"oder tree whose associated partition is $\sigma$. It is of interest to clarify the relation between these two different types of trees associated to a noncrossing partition.
\end{Remark}

\section{Boolean cumulants in terms of moments via Schr\"oder trees}
\label{sec:boolean}

Theorem \ref{thm:mainthm1} shows that there is a nice formula that allows expressing monotone cumulants in terms of moments as a sum over Schr\"oder trees, namely
\begin{equation}
\label{eq:MonST}
h_n(a_1,\dots,a_n) = \sum_{t\in \ST(n)} \omega(\sk(t)) \varphi_{\pi(t)}(a_1,\dots,a_n),
\end{equation}
for any $a_1,\dots,a_n$ and $n\geq1$. On the other hand, Theorem 7.2 in \cite{JMNT} states the analogous formula for free cumulants:
\begin{equation}
\label{eq:tildekappa}
r_n(a_1,\dots,a_n) = \sum_{t\in \operatorname{PST}(n)} (-1)^{i(t)-1} \varphi_{\pi(t)}(a_1,\dots,a_n),
\end{equation}
with $i(t)$ being the number of internal vertices of $t$ and
\[\operatorname{PST}(n) := \{t\in \ST(n)\colon \mbox{the leftmost subtree of $t$ is a leaf} \}.\]

Notice that \eqref{eq:tildekappa} is rather different than the formula in~\eqref{eq:freeCM}, since for any noncrossing partition $\sigma\in \NC(n)$ there may be several $t\in \operatorname{PST}(n)$ such that $\pi(t)=\sigma$. This finer expression for the cumulants also allows the authors in~\cite{JMNT} to recover the operator-valued analogue of~\eqref{eq:tildekappa}.

We are interested now in obtaining a similar equation for the Boolean case by using the unshuffle algebra structure on $\cHS(\A)$. Associated to the half-coproducts $\delta_{\prec,\A}$ and $\delta_{\succ,\A}$, we have two non-associative products on $\operatorname{Lin}(\cHS(\A),\mathbb{C})$ given by
\begin{gather*}
f \;\tilde{\prec}\; g = m_\mathbb{C}\circ (f\otimes g) \circ \delta_{\prec,\A},
\qquad f\; \tilde{\succ} \;g = m_\mathbb{C}\circ (f\otimes g) \circ \delta_{\succ,\A},
\end{gather*}
for any $f,g\in \operatorname{Lin}(\cHS(\A),\mathbb{C})$. Now, let $(\A,\varphi)$ be a noncommutative probability space and let $\tilde\Phi\colon \cHS(\A)\to\mathbb{C}$ be the character associated to $\varphi$ defined in~\eqref{eq:tildephi}. Since $(\operatorname{Lin}(\cHS(\A),\mathbb{C}),\tilde{\prec},\tilde{\succ})$ is a shuffle algebra, we can consider the infinitesimal character $\tilde{\beta}\colon \cHS(\A)\to\mathbb{C}$ such that
\begin{equation}\label{eq:fixScBool}
\tilde\Phi = \varepsilon_\A + \tilde\Phi\; \tilde\succ\; \tilde\beta.
\end{equation}
We now define the map $\beta\colon T(T_+(\A))\to\mathbb{C}$ by $\beta = \tilde{\beta}\circ \iota$, where $\iota$ is the unshuffle bialgebra morphism defined in~\eqref{eq:iota}. By a similar argument that of the proof of Proposition~\ref{prop:easy}, we conclude that $\beta$ satisfies the fixed-point equation
\[
\Phi = \epsilon + \Phi \succ \beta,
\]
where $\Phi$ is as in Proposition~\ref{prop:easy}. By Theorem~\ref{thm:linkNCP}, it follows that
\[
\beta(w) = \sum_{t\in \ST(|w|)} \tilde{\beta}(t\otimes w)
\]
is the infinitesimal character on $T(T_+(\A))$ associated to the Boolean cumulants. We can use~\eqref{eq:fixScBool} in order to obtain an expression for $\tilde\beta(t\otimes w)$. Indeed, let $t\in \ST(n)$ and $a_1,\dots,a_n\in \A$. Observe that the corresponding sum in the definition of $\delta_{\succ,\A}$ is indexed by
\[\operatorname{Adm}(t) \backslash \operatorname{Adm}_\prec(t) = \{c\in \operatorname{Adm}(t)\colon \mbox{$c$ contains the leftmost leaf of $t$} \}.\]
Recalling that $\tilde \beta$ is an infinitesimal character, we have that
\begin{align*}
\tilde\Phi(t\otimes a_1\cdots a_n) &= \big(\varepsilon_\A + \tilde\Phi\; \tilde\succ\; \tilde\beta\big)(t\otimes a_1\cdots a_n) \\&= \sum_{c\in \operatorname{Adm}(t) \backslash \operatorname{Adm}_\prec(t)} \tilde{\Phi} (R_c(t\otimes a_1\cdots a_n)) \tilde\beta(P_c(t\otimes a_1\cdots a_n))\\
&= \sum_{\substack{c=\{v\}\\\mbox{\tiny{$v$ is an internal vertex belonging}}\\\mbox{\tiny{ the leftmost branch of $t$ }}}} \hspace{-0.7cm}\tilde{\Phi} (R_c(t\otimes a_1\cdots a_n)) \tilde\beta(P_c(t\otimes a_1\cdots a_n))
\end{align*}
since only $c$ with $P_c(t\otimes a_1\cdots a_n)$ being a single tree may produce a non-zero contribution. On the other hand, the definition of $\tilde\Phi$ implies that $\tilde{\Phi} (R_c(t\otimes a_1\cdots a_n)) \tilde\beta(P_c(t\otimes a_1\cdots a_n))=0$ when~$R_c(t)$ is neither a corolla nor the single-vertex tree. Hence, the only possibility for a non-zero contribution in the above sum is that $c$ is the set containing the root of $t$, or $c$ is the set containing the leftmost child of the root. If $t'\otimes a_1\cdots a_m$ is the leftmost decorated subtree of $t\otimes a_1\cdots a_n$, and $t''\otimes a_{m+1}\cdots a_n$ is the tree obtained from $t\otimes a_1\cdots a_n$ when we delete $t'\otimes a_1\cdots a_m$, then\looseness=-1
\begin{equation}
\label{eq:ForBool}
\tilde\Phi(t\otimes a_1\cdots a_n) = \tilde\beta(t\otimes a_1\cdots a_n) + \tilde\Phi(t'' \otimes a_{m+1}\cdots a_n) \tilde{\beta}(t'\otimes a_{1}\cdots a_m).
\end{equation}
Now we can prove the main result of this section.
\begin{Proposition}
Let $(\A,\varphi)$ be a noncommutative probability space. Also, let $t \in \ST(n)$ and a~word $w=a_1\cdots a_n\in T_+(\A)$, for $n\geq1$. If $\tilde{\Phi}$ is the extension of $\varphi$ to a character as defined in~\eqref{eq:tildephi} and $\tilde\beta$ is as defined in~\eqref{eq:fixScBool}, then we have
\begin{equation} \label{eq:tildebeta}
 \tilde\beta(t \otimes w)
	= \begin{cases} (-1)^{i(t)-1}\varphi_{\pi(t)}(a_1,\dots,a_n) & \text{if $t\in \operatorname{BST}(n)$},\\ 0& \text{otherwise}, \end{cases}
\end{equation}
where $\operatorname{BST}(n)$ is the subset of Schr\"oder trees $t$ with $n+1$ leaves such that $t$ is obtained as follows: starting from the single-vertex tree, we graft a corolla on the leftmost leaf of the subtree until we get~$t$.
\end{Proposition}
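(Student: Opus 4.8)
The plan is to turn the identity \eqref{eq:ForBool}, established just above, into a recursion for $\tilde\beta$ and then induct on $n$. Solving \eqref{eq:ForBool} gives
\[
	\tilde\beta(t\otimes a_1\cdots a_n)
	= \tilde\Phi(t\otimes a_1\cdots a_n)
	- \tilde\Phi(t''\otimes a_{m+1}\cdots a_n)\,\tilde\beta(t'\otimes a_1\cdots a_m),
\]
where $t'$ is the leftmost subtree of $t$, carrying the sectors $1,\dots,m$, and $t''$ is the trunk obtained by collapsing $t'$ to a single leaf, carrying the sectors $m+1,\dots,n$. Two facts drive the argument: first, by \eqref{eq:tildephi} the character $\tilde\Phi$ vanishes on every decorated tree that is not a corolla, so the first summand survives only when $t$ is a corolla and the second only when $t''$ is a corolla; second, $\tilde\beta$ is an infinitesimal character, so $\tilde\beta(\circ)=0$, which makes the second summand vanish whenever the leftmost child of the root is a leaf (so that $t'=\circ$ and $m=0$).

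For the base case $n=1$ the only tree is the corolla with two leaves, which lies in $\operatorname{BST}(1)$, and the recursion reads $\tilde\beta(t\otimes a_1)=\tilde\Phi(t\otimes a_1)=\varphi(a_1)=(-1)^{0}\varphi_{\pi(t)}(a_1)$. For the inductive step I would split according to the shape of $t$ at the root. If $t$ is a corolla, then $t\in\operatorname{BST}(n)$ with $i(t)=1$ and $\pi(t)=1_n$, the second summand vanishes since $t'=\circ$, and the recursion gives $\tilde\beta(t\otimes w)=\varphi(a_1\cdots a_n)$, as claimed. If $t$ is not a corolla, the first summand vanishes; when in addition $t''$ is not a corolla (equivalently, some non-leftmost child of the root is internal, which includes the case where the leftmost child is a leaf) the second summand also vanishes, so $\tilde\beta(t\otimes w)=0$, and indeed $t\notin\operatorname{BST}(n)$ because its internal vertices do not all lie on the leftmost branch.

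The remaining case, where $t$ is not a corolla but $t''$ is, is the heart of the proof and carries the only real bookkeeping. Here the leftmost child of the root is necessarily internal, so $1\le m<n$ and the induction hypothesis applies to $t'\in\ST(m)$. Collapsing $t'$ to a leaf shows that $\pi(t)$ is the disjoint union of $\pi(t')$ on $\{1,\dots,m\}$ with the single root block $\{m+1,\dots,n\}$, whence $\varphi_{\pi(t)}(a_1,\dots,a_n)=\varphi_{\pi(t')}(a_1,\dots,a_m)\,\varphi(a_{m+1}\cdots a_n)$ and $i(t)=i(t')+1$; moreover, directly from the recursive ``graft a corolla on the leftmost leaf'' description, $t\in\operatorname{BST}(n)$ if and only if $t'\in\operatorname{BST}(m)$. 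Substituting $\tilde\Phi(t''\otimes a_{m+1}\cdots a_n)=\varphi(a_{m+1}\cdots a_n)$ and the induction hypothesis for $\tilde\beta(t'\otimes a_1\cdots a_m)$ into the recursion then yields $\tilde\beta(t\otimes w)=(-1)^{i(t)-1}\varphi_{\pi(t)}(a_1,\dots,a_n)$ when $t'\in\operatorname{BST}(m)$, and $\tilde\beta(t\otimes w)=0$ with $t\notin\operatorname{BST}(n)$ when $t'\notin\operatorname{BST}(m)$. The main obstacle is exactly establishing this compatibility of sector labels, of the partition $\pi$, and of membership in $\operatorname{BST}$ under the decomposition $t=(t',t'')$; once it is verified, the sign $(-1)$ from the recursion combines with $(-1)^{i(t')-1}$ to produce $(-1)^{i(t)-1}$, and the moment factor $\varphi(a_{m+1}\cdots a_n)$ completes $\varphi_{\pi(t)}$.
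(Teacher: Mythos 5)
Your argument is correct and follows essentially the same route as the paper: both proofs solve the recursion \eqref{eq:ForBool} for $\tilde\beta$, observe that $\tilde\Phi$ kills non-corollas, and induct (you on $n$, the paper on $i(t)$ --- an immaterial difference since $t'$ is smaller in both measures), with the same trichotomy of cases and the same bookkeeping for $\pi(t)$, $i(t)$ and $\operatorname{BST}$-membership under the decomposition $t=(t',t'')$. Your write-up is in fact slightly more explicit than the paper's about why $\pi(t)=\pi(t')\sqcup\{\{m+1,\dots,n\}\}$ and about the degenerate case $t'=\circ$, but there is no substantive divergence.
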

\begin{proof}We will use induction on $i(t)$. For the base, let $t$ be a decorated Schr\"oder tree with $i(t)=1$, i.e., $t$ is a decorated corolla. Using~\eqref{eq:ForBool} and the definition of $\tilde\Phi$ we have
\[
(-1)^{i(t)-1}\varphi_{1_n}(a_1,\dots, a_n) = \tilde\Phi(t\otimes a_1\cdots a_n)
=\tilde\beta(t\otimes a_1\cdots a_n) + 0,
\]
as desired. For the inductive step, we assume that the result holds for any decorated Schr\"oder tree with less than $k$ internal vertices. Now, we take $t\in \ST(n)$ with $i(t) =k$. Again, by using~\eqref{eq:ForBool} and the definition of $\tilde\Phi$, we have
\[
0= \tilde\Phi(t\otimes a_1\cdots a_n) = \tilde\beta(t\otimes a_1\cdots a_n) + \tilde\Phi(t'' \otimes a_{m+1}\cdots a_n) \tilde{\beta}(t'\otimes a_{1}\cdots a_m),
\]
where $t'$ and $t''$ are the decorated Schr\"oder subtrees previously described. Thus
\[ \tilde\beta(t\otimes a_1\cdots a_n) = -\tilde\Phi(t'' \otimes a_{m+1}\cdots a_n) \tilde{\beta}(t'\otimes a_{1}\cdots a_m).\]
Observe that the condition $t\in \operatorname{BST}(n)$ is equivalent to the fact that $t'\in \operatorname{BST}(m)$ with $i(t)-1$ internal vertices, and $t''$ is a corolla. Hence, if $t\in \operatorname{BST}(n)$, we can use the inductive hypothesis on $t'$ where we have
\begin{align*}
 \tilde\beta(t\otimes a_1\cdots a_n) &= -\varphi(a_{m+1}\cdots a_n) (-1)^{i(t)-2}\varphi_{\pi(t')}(a_1,\dots,a_m)
 \\ &= (-1)^{i(t)-1}\varphi_{\pi(t)}(a_1,\dots,a_n).
\end{align*}
On the other hand, if $t\not\in \operatorname{BST}(n)$, we can have that $t'\not\in \operatorname{BST}(m)$ or $t''$ is not a corolla. In the first case, we use the inductive hypothesis to conclude that $\tilde\beta(t'\otimes a_1\cdots a_m) = 0$, while in the second case we have that $\tilde\Phi(t''\otimes a_{m+1}\cdots a_n)=0$. In any case, we obtain that $\tilde\beta(t\otimes a_1\cdots a_n)=0$ as we wanted to show. This concludes the inductive step and the proof of the proposition as well.\looseness=-1
\end{proof}

The proposition above allows us to conclude that the family of Boolean cumulants of $(\A,\varphi)$ can be written by
\begin{equation}
\label{eq:BoolST}
b_n(a_1,\dots,a_n) = \sum_{t\in \operatorname{BST}(n)} (-1)^{i(t)-1} \varphi_{\pi(t)}(a_1,\dots,a_n), \qquad n\geq1.
\end{equation} Unlike the free case, the latter formula is exactly formula \eqref{eq:BoolMC} the function $t\mapsto\pi(t)$ described in Definition \ref{def:pit} is a clear bijection between $\operatorname{BST}(n)$ such that $\Int(n)$ and $i(t) = |\pi(t)|$.

\begin{Remark}
The free analogue showed in \cite{JMNT} establishes that
\begin{equation}
\label{eq:tildekappa1}
	\tilde\kappa(t \otimes a_1\cdots a_n)
	= \begin{cases} (-1)^{i(t)-1}\varphi_{\pi(t)}(a_1,\dots,a_n) & \text{if $t\in \operatorname{PST}(n)$,}\\ 0& \text{otherwise}, \end{cases}
\end{equation}
where $\tilde\kappa\colon \cHS(\A)\to\mathbb{C}$ is the infinitesimal character satisfying the fixed-point equation
\[\tilde\Phi = \varepsilon_\A + \tilde\kappa \,\tilde\prec\, \tilde\Phi.\]
We can collect the result in \eqref{eq:tildekappa1} together with~\eqref{eq:tilderho10} and~\eqref{eq:tildebeta} in order to obtain an analogue of Theorem \ref{thm:linkNCP} in the unshuffle bialgebra of Schr\"oder trees.
\end{Remark}

\begin{Proposition}Let $(\A,\varphi)$ be a noncommutative probability space and let $\tilde\Phi\colon \cHS(\A)\to\mathbb{C}$ the extension of $\varphi$ defined in \eqref{eq:tildephi}. Let $\big(\tilde\kappa,\tilde\beta,\tilde\rho\big)$ be the triple of infinitesimal characters on $\cHS(\A)$ satisfying the identities \[\tilde\Phi = \mathcal{E}_{\tilde\prec}(\tilde\kappa)= \mathcal{E}_{\tilde\succ}\big(\tilde\beta\big) = \exp^*(\tilde\rho).\]
Then, for any $t\in \ST(n)$ and $a_1,\dots,a_n\in\mathcal{A}$, the triple is given by
\begin{gather*}
	\tilde\kappa(t \otimes a_1\cdots a_n)
	= \begin{cases} (-1)^{i(t)-1}\varphi_{\pi(t)}(a_1,\dots,a_n) & \text{if $t\in \operatorname{PST}(n)$,}\\ 0& \text{otherwise,} \end{cases}
	\\ \tilde\beta(t \otimes a_1\cdots a_n)
	= \begin{cases} (-1)^{i(t)-1}\varphi_{\pi(t)}(a_1,\dots,a_n) & \text{if $t\in \operatorname{BST}(n)$,}\\ 0& \text{otherwise,} \end{cases}
	\\ \tilde\rho(t\otimes a_1\cdots a_n) = \omega(\sk(t)) \varphi_{\pi(t)}(a_1,\dots,a_n).
\end{gather*}
Moreover, the evaluations $\tilde\kappa\circ \iota$, $\tilde\beta\circ\iota$ and $\tilde\rho\circ \iota$ on a word $w = a_1\cdots a_n$ coincide with the free, Boolean and monotone cumulants of $a_1,\dots,a_n$, respectively:
\begin{gather*}
(\tilde\kappa\circ \iota)(w) = r_n(a_1,\dots,a_n), \qquad (\tilde\beta\circ\iota)(w) = b_n(a_1,\dots,a_n),\\ (\tilde\rho\circ \iota)(w) = h_n(a_1,\dots,a_n),
\end{gather*}
where $\iota$ is the coalgebra morphism defined in~\eqref{eq:iota}.
\end{Proposition}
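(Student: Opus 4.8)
The plan is to recognize this final Proposition as a synthesis of results already established, so that the proof amounts to assembling the correct references and verifying one compatibility with $\iota$. First I would observe that the three stated value formulas are exactly the content of the preceding results: the expression for $\tilde\rho$ is Lemma~\ref{lemma:main} (equation~\eqref{eq:tilderho10}), the expression for $\tilde\beta$ is the Proposition preceding this one (equation~\eqref{eq:tildebeta}), and the expression for $\tilde\kappa$ is the formula~\eqref{eq:tildekappa1} established in~\cite{JMNT}. What must still be checked is that the triple singled out by the shuffle-exponential identities $\tilde\Phi = \mathcal{E}_{\tilde\prec}(\tilde\kappa) = \mathcal{E}_{\tilde\succ}(\tilde\beta) = \exp^*(\tilde\rho)$ is the same triple for which those formulas were derived. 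This is immediate from the shuffle-algebra version of Theorem~\ref{thm:basicEFP}, which applies because $(\cHS(\A),\delta_{\prec,\A},\delta_{\succ,\A})$ is a connected graded unshuffle bialgebra: the half-shuffle fixed-point equations $\tilde\Phi = \varepsilon_\A + \tilde\kappa\,\tilde\prec\,\tilde\Phi$ and $\tilde\Phi = \varepsilon_\A + \tilde\Phi\,\tilde\succ\,\tilde\beta$ (the latter being~\eqref{eq:fixScBool}) are equivalent to $\tilde\Phi = \mathcal{E}_{\tilde\prec}(\tilde\kappa)$ and $\tilde\Phi = \mathcal{E}_{\tilde\succ}(\tilde\beta)$, while $\exp^*(\tilde\rho) = \tilde\Phi$ is the defining relation for $\tilde\rho$, and uniqueness of the triple forces the identifications.

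For the ``Moreover'' part I would exploit that $\iota$, defined in~\eqref{eq:iota}, is an unshuffle bialgebra morphism, so it dualizes to a morphism of shuffle algebras: since $\delta_{\prec,\A}\circ\iota = (\iota\otimes\iota)\circ\Delta_\prec$, one gets $(f\,\tilde\prec\,g)\circ\iota = (f\circ\iota)\prec(g\circ\iota)$ for all $f,g\in\operatorname{Lin}(\cHS(\A),\mathbb{C})$, and likewise for $\tilde\succ$ and for $*$. Iterating the half-shuffle powers degreewise, the exponentials intertwine: $\mathcal{E}_{\tilde\prec}(\alpha)\circ\iota = \Expl(\alpha\circ\iota)$, and similarly $\mathcal{E}_{\tilde\succ}(\alpha)\circ\iota = \Expr(\alpha\circ\iota)$ and $\exp^*(\alpha)\circ\iota = \exp^*(\alpha\circ\iota)$. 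Setting $\kappa = \tilde\kappa\circ\iota$, $\beta = \tilde\beta\circ\iota$, $\rho = \tilde\rho\circ\iota$, each is an infinitesimal character, being the composite of an infinitesimal character with the algebra morphism $\iota$ exactly as in Proposition~\ref{prop:easy}. Composing the three defining identities with $\iota$ and using $\Phi = \tilde\Phi\circ\iota$ then yields $\Phi = \Expl(\kappa) = \Expr(\beta) = \exp^*(\rho)$. By the uniqueness in Theorem~\ref{thm:basicEFP} together with Theorem~\ref{thm:linkNCP}, $\kappa$, $\beta$ and $\rho$ are precisely the free, Boolean and monotone cumulant functionals, which gives the three asserted evaluations. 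This step merely repeats, for $\kappa$, the arguments already carried out for $\rho$ in Proposition~\ref{prop:easy} and for $\beta$ around~\eqref{eq:fixScBool}.

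Since every ingredient is in place, there is no genuine obstacle here; the result is essentially a summary. The only point deserving explicit care is the intertwining of the half-shuffle products, and hence of the half-shuffle exponentials, by $\iota$: this rests on $\iota$ being an \emph{unshuffle} bialgebra morphism and not merely a coalgebra morphism. I would state that compatibility cleanly, noting that the coalgebra-morphism property alone suffices for the associative convolution exponential (as was used for $\tilde\rho$ in Proposition~\ref{prop:easy}), whereas the statements for $\tilde\kappa$ and $\tilde\beta$ genuinely require the finer half-coproduct compatibility $\delta_{\prec,\A}\circ\iota = (\iota\otimes\iota)\circ\Delta_\prec$ and its $\succ$-counterpart.
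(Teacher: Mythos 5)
Your proposal is correct and matches the paper's intent exactly: the paper offers no separate proof for this Proposition beyond the preceding Remark, which says precisely that one collects \eqref{eq:tildekappa1}, \eqref{eq:tildebeta} and \eqref{eq:tilderho10}, identifies the fixed-point equations with the half-shuffle exponential identities via the analogue of Theorem~\ref{thm:basicEFP} on $\cHS(\A)$, and transports everything along $\iota$ as in Proposition~\ref{prop:easy} and the argument around \eqref{eq:fixScBool}. Your added observation that the half-shuffle cases genuinely require $\iota$ to intertwine the half-coproducts (not merely the full coproduct) is a correct and worthwhile clarification, but it does not change the route.
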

In particular, the above proposition provides the new moment-cumulant formulas \eqref{eq:MonST}, \eqref{eq:tildekappa} and \eqref{eq:BoolST} as sum of products of moments indexed by Schr\"oder trees instead of noncrossing partitions.

\subsection*{Acknowledgements}
The authors would like to thank Kurusch Ebrahimi-Fard for their valuable comments in the preparation of this manuscript. Octavio Arizmendi received financial support by CONACYT Grant CB-2017-2018-A1-S-9764 ``Matrices Aleatorias y Probabilidad No Conmutativa'' and by the SFB-TRR 195 ``Symbolic Tools in Mathematics and their Application'' of the German Research Foundation (DFG). Adrian Celestino was partially supported by the project Pure Mathematics in Norway, funded by Trond Mohn Foundation and Troms{\o} Research Foundation. We thank the referees for the careful reading of the paper.

\pdfbookmark[1]{References}{ref}
\LastPageEnding

\end{document}